\documentclass[11pt]{amsart}
\usepackage{latexsym}
\usepackage{color}
\usepackage{amsfonts}
\usepackage{amsmath}
\usepackage{parselines}
\usepackage{amssymb}
\usepackage{amsthm}
\usepackage[english]{babel}
\newtheorem{theorem}{Theorem}[section]
\newtheorem*{theorem*}{Theorem}
\newtheorem{corollary}{Corollary}[theorem]
\newtheorem*{corollary*}{Corollary 2.4}
\newtheorem{lemma}[theorem]{Lemma}
\newtheorem{definition}{Definition}
\usepackage[margin=1in]{geometry}
\usepackage{bookmark}
\newcommand{\Mod}[1]{\ (\mathrm{mod}\ #1)}

\begin{document}
 	\title{ONE-LEVEL DENSITY OF ZEROS OF DIRICHLET $L$-FUNCTIONS OVER FUNCTION FIELDS}
 	\author{Hua Lin}
	\maketitle
	
	\begin{abstract}
		We compute the one-level density of zeros of order $\ell$ Dirichlet $L$-functions over function fields $\mathbb{F}_q[t]$ for $\ell=3,4$ in the Kummer setting ($q\equiv1\Mod{\ell}$) and for $\ell=3,4,6$ in the non-Kummer setting ($q\not\equiv1\Mod{\ell}$). In each case, we obtain a main term predicted by Random Matrix Theory (RMT) and lower order terms not predicted by RMT. We also confirm the symmetry type of the families is unitary, supporting Katz and Sarnak's philosophy. 
	\end{abstract}

\section{Introduction}

Hilbert and P\'{o}lya suggested that there is a natural spectral interpretation of the zeros of the Riemann zeta function in the early 1900s. Although there was little evidence at the time, today there are many results supporting their suggestion. Trying to understand the distribution of zeros of the Riemann zeta function, Montgomery computed the pair correlation of normalized zeros $\hat{\gamma}$ \cite{Mon}. Let $Z\left(T\right)$ be the set of zeros $\{\gamma=\sigma + it: \zeta(\gamma)=0, 0\le\sigma<1, 0<t<T\}$. He found that, under the Riemann hypothesis, for a test function $f$ with the support of its Fourier transform $\hat{f}$ in $(-1, 1)$,
\begin{equation*}
	\lim_{T \rightarrow \infty}\frac{1}{\left|Z(T)\right|} \sum_{\substack{\gamma, \gamma^\prime \in Z(T)  \\ \alpha \le \hat{\gamma}-\hat{\gamma^\prime}\le \beta }} f\left(\hat{\gamma}-\hat{\gamma^\prime}\right)
	= \int_{\alpha}^{\beta}f(x)\left(1-\left(\frac{\sin(\pi x)}{\pi x}\right)^2\right)dx.
\end{equation*}   
The physicist Freeman Dyson recognized the function $\displaystyle 1-\left(\frac{\sin(\pi x)}{\pi x}\right)^2$ in the integrand above and pointed out to Montgomery that the eigenvalues of a random complex Hermitian or unitary matrix of large order have precisely the same distribution function \cite{Mon}. Since then, numerous parallels were observed in the statistics of eigenvalues of random matrices and the statistics of zeros of the zeta function and $L$-functions.

In 1999, Katz and Sarnak conjectured that the statistics of zeros in \textit{families} of $L$-functions, in the limit when the conductor of the $L$-functions gets large, follow the distribution laws of classical random matrices \cite{KSa,KSb}. This is known as Katz and Sarnak's philosophy. In their work over finite fields, they used deep equidistribution theorems by Deligne taking the genus $g$ of the curves and $\left|\mathbb{F}_q[x]\right|$ both to infinity. Fixing the ground field and just taking the genus to infinity cannot be similarly studied using the same method; instead, the computations depend more on the arithmetic of the family. 

In this paper, we consider a fixed ground field $\mathbb{F}_q(x)$ and let the genus $g$ tend to infinity. We compute the one-level density of zeros of Dirichlet $L$-functions and obtained results where the main term matches that predicted by RMT and lower order terms given by the Ratios Conjecture. Using ratios of shifted $L$-functions, the Ratios Conjecture gives more precise heuristics including lower order terms for the family \cite{RCNF}, \cite{RCFF}. 

One-level density of zeros studies the average behavior of low-lying zeros of families of $L$-functions. It is a local statistic that differs depending on the symmetry type of the family. Computing the one-level density can lead to results of no-vanishing at the central value which are of special interests. 
A conjecture due to Chowla \cite{Chowla} states that the Dirichlet $L$-function $L(s, \chi)$ does not vanish at the central value $s=1/2$ for any primitive character $\chi$. Over number fields, \"{O}zl\"{u}k and Snyder computed the one-level density of zeros of quadratic Dirichlet $L$-functions assuming the Generalized Riemann Hypothesis (GRH) \cite{OS}; they proved that at least 15/16 of the members in the family do not vanish at the central point $s=1/2$. For the same family, Soundararajan later achieved a remarkable unconditional result of at least 87.5\% non-vanishing using mollified moments \cite{Sound}.

Similar results were shown for the quadratic case over function fields. In the ring $\mathbb{F}_q\left[t\right]$ of an odd prime power $q$, let $\mathcal{H}_{2g+1}$ denote the ensemble of monic, squarefree polynomials of degree $2g+1$ over $\mathbb{F}_q[t]$. Let $\phi\left(\theta\right)=\sum_{|n|\le N}\hat{\phi}\left(n\right)e\left(n\theta\right)$ be a real, even trigonometric polynomial, where $e(x)=e^{2\pi i x}$ and  $\Phi\left(2g\theta\right)=\phi(\theta)$. Rudnick computed the one-level density of quadratic Dirichlet $L$-functions over function fields by analyzing the traces of Frobenius classes of hyperelliptic curves \cite{Rudnick}. He showed that for $\hat{\Phi}$ supported in $\left(-2,2\right)$, when $q$ is fixed and as $g\rightarrow \infty$, 
$$\frac{1}{|\mathcal{H}_{2g+1}|}\sum_{D \in \mathcal{H}_{2g+1}}\sum_{j=1}^{2g}\Phi\left(2g\theta_{j,D}\right)=\hat{\Phi}(0)-\frac{1}{g}\sum_{n\le g}\hat{\Phi}(n/g)+\frac{dev(\Phi)}{g}+o(1/g).$$
Here
$$dev(\Phi)=\hat{\Phi}(0)\sum_{P\in \mathcal{P}}\frac{d(P)}{|P|^2-1}-\frac{\hat{\Phi}(1)}{q-1},$$ 
where the sum is over all monic irreducible polynomials $P$ and $d(P)$ denotes the degree of $P$.
Bui and Florea in \cite{BF} extended Rudnick's result and computed further lower order terms not predicted by any heuristics when the support of the Fourier transform is limited. These one-level density results provide support for Katz and Sarnak's philosophy.

The investigation of higher order characters started more recently. Over number fields, Baier and Young studied moments of cubic $L$-functions and conjectured that their methods will work for moments of quartic and sextic $L$-functions. Cho and Park \cite{ChoandPark} studied the one-level density of cubic $L$-functions in the Kummer setting under GRH and obtained results for test functions $f$ with the support of $\hat{f}$ in $(-1,1)$. The authors found that their results match those predicted by the Ratios Conjecture. David and G\"{u}lo\u{g}lu computed the one-level density of a thin family of cubic Dirichlet $L$-functions and obtained a positive proportion of non-vanishing at $s=1/2$ in the Kummer setting \cite{DG}. Gao and Zhao \cite{GZ1}, \cite{GZ2}, \cite{GZ6} studied various statistics of thin families of quartic and sextic $L$-functions over number fields under GRH, where the latter showed a 2/45 of non-vanishing in the thin sextic family. Over function fields, David, Florea and Lalin studied mean values of cubic $L$-functions and obtained a positive proportion of non-vanishing in the full family \cite{DFL}, \cite{DFL2}. 

In this paper we consider higher order Dirichlet $L$-functions over function fields with characters $\chi$ such that $\chi^{\ell}=1$. Similar to \cite{DFL}, our approach differs depending on the existence of $\ell^{th}$ roots of unity in $\mathbb{F}_q^\times$, i.e., if $q\equiv 1\Mod\ell$.
We compute the one-level density of zeros of cubic and quartic Dirichlet $L$-functions over function fields when $q\equiv1\Mod\ell$ (Kummer setting), and for cubic, quartic and sextic Dirichlet $L$-functions when $q\not\equiv1\Mod\ell$ (non-Kummer setting). In both settings, we obtained a main term matching that predicted by RMT and lower order terms not predicted by RMT. We confirm the symmetry type of the families is unitary. 


In Section \ref{section 2}, we collect the statements of the main theorems in this paper.

In Section \ref{section 3}, first we give a list of notations and cover some relevant background for $L$-functions in function fields. Then we define the primitive Dirichlet characters in both settings. The construction is standard in the Kummer setting, and in the non-Kummer setting, we use a more natural description of primitive characters in $\mathbb{F}_q[t]$ similar to the one used for cubic characters in \cite{DFL}, and was given in \cite{BSM}, where the authors extended the work of \cite{BY} to function fields. Lastly we prove the explicit formula, which allows us to rewrite the sum over the zeros in the one-level density into a sum over prime powers. We also explicitly write down the main term and the error terms in both settings.
 
In Section \ref{section 4}, we compute the one-level density for cubic and quartic Dirichlet $L$-functions in the Kummer setting and prove the symmetry type of the family is unitary.

In Section \ref{section 5} we compute the one-level density for cubic, quartic and sextic Dirichlet $L$-functions in the non-Kummer setting and prove the symmetry type of the family is unitary.\\


\section{Statement of Results} \label{section 2}
We have the following results for the one-level density of zeros of Dirichlet $L$-function over function fields. 
The explicit one-level density formulas are given in \eqref{equation 1LD K3}, \eqref{equation 1LD K4} for the two Kummer cases and \eqref{equation 1LD nK} for the non-Kummer cases.
\begin{theorem}
	\label{Theorem 1}
	Let $\phi(\theta)=\sum_{|n|\le N}\hat{\phi}(n)e(n\theta)$ be any real, even trigonometric polynomial and $\displaystyle\Phi\left(g\theta\right)=\phi(\theta)$. Let $\Sigma_3^\mathrm{K}\left(\Phi, g\right)$ be the one-level density of zeros of cubic Dirichlet $L$-functions in the Kummer setting given in \eqref{1LD K} for $\ell=3$.
	

	We have that
	\begin{equation}
		\label{equation final l=3}
		\begin{split}
			\Sigma_3^\mathrm{K}\left(\Phi, g\right)
			=
			\hat{\Phi}&(0)
			-
			\frac{2}{g}\sum_{1\le n\le N/3}\hat{\Phi}\left(\frac{3n}{g}\right)\sum_{\substack{Q\in \mathcal{P}_{q, n/r}\\r\ge1}}\frac{d(Q)}{|Q|^{3r/2}\left(1+2|Q|^{-1}\right)}\\
			+&4\mathfrak{Re}\left( \frac{h_1}{g}\sum_{1\le n\le N/3}\hat{\Phi}\left(\frac{3n}{g}\right)\sum_{\substack{Q\in \mathcal{P}_{q, n/r}\\r\ge1}}\frac{d(Q)\left(1-c_Q\right)}{|Q|^{3r/2}\left(1+2|Q|^{-1}\right)}\right)\\
			+&
			\frac{2{h_2}}{g}\sum_{1\le n\le N/3}\hat{\Phi}\left(\frac{3n}{g}\right)\sum_{\substack{Q\in \mathcal{P}_{q, n/r}\\r\ge1}}\frac{2d(Q)^2|Q|^{-1}}{|Q|^{3r/2}\left(1+2|Q|^{-1}\right)^2}
			+O\left(q^{N/2}q^{-g/2}q^{\epsilon N}\right),
		\end{split}
	\end{equation}
	where $c_Q$, $h_1$ and $h_2$ are explicitly defined in \eqref{constant c_Q}, \eqref{constant h_1} and \eqref{constant h_2} respectively.
\end{theorem}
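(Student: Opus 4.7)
The plan is to start from the explicit formula proved in Section~\ref{section 3}, which expresses $\sum_{\gamma_\chi} \Phi(g\theta_{\gamma_\chi})$ as $\hat{\Phi}(0)$ minus a weighted sum of $\chi(Q^r)+\bar\chi(Q^r)$ over prime powers $Q^r$, with weight proportional to $\hat{\Phi}(3rd(Q)/g)\,d(Q)/(g|Q|^{3r/2})$. Averaging this identity over the family $\mathcal{F}$ of primitive cubic characters in the Kummer setting reduces Theorem~\ref{Theorem 1} to the evaluation of the character averages
\[
\mathcal{A}(Q^r) := \frac{1}{|\mathcal{F}|}\sum_{\chi\in\mathcal{F}}\chi(Q^r)
\]
for prime powers with $3rd(Q)\le 3N$, together with a trivial estimate for the tail.

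In the Kummer setting, primitive cubic characters are parametrized by cube-free monic polynomials $F$ via the cubic residue symbol $\chi_F = \left(\tfrac{\cdot}{F}\right)_3$, so I would split the computation of $\mathcal{A}(Q^r)$ into a \emph{diagonal} part (when $3\mid r$, so that $Q^r$ is a cube and $\chi_F(Q^r)=1$ whenever $\gcd(F,Q)=1$) and an \emph{off-diagonal} part. The diagonal reduces to counting cube-free $F$ of the given degree coprime to $Q$. Since the Dirichlet series of cube-free monics factors as $\prod_P(1+|P|^{-s}+|P|^{-2s})$, removing the prime $Q$ from the Euler product produces exactly the local factor $(1+2|Q|^{-1})^{-1}$ that appears in the second line of \eqref{equation final l=3}.

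The off-diagonal contribution is the delicate piece: when $Q^r$ is not a cube, $\chi_F(Q^r)$ is a genuine cubic character value, and I would evaluate $\sum_{F\text{ cube-free}}\chi_F(Q^r)$ by applying cubic reciprocity to flip to $\left(\tfrac{F}{Q^r}\right)_3$, then applying Poisson summation in $F$ in the style of \cite{DFL} and \cite{BSM}. The resulting dual sums involve cubic Gauss sums over $\mathbb{F}_q[t]/(Q^r)$; their explicit evaluation, after isolating the main-order contribution and the secondary term (where a $d(Q)$ factor appears naturally from logarithmic differentiation of the local Euler factor at $Q$), yields the two explicit corrections in \eqref{equation final l=3} together with their respective local weights $(1-c_Q)/(1+2|Q|^{-1})$ and $2d(Q)|Q|^{-1}/(1+2|Q|^{-1})^2$. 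The constants $h_1, h_2$ collect the global contribution of the Gauss-sum evaluation (and $h_1$ is complex, which is why $\mathfrak{Re}$ appears), while the factor of $4$ in front of $\mathfrak{Re}(h_1/g\cdots)$ combines the $\chi+\bar\chi$ symmetrization with the $r\equiv 1$ and $r\equiv 2\pmod 3$ contributions.

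The main obstacle will be controlling the off-diagonal step uniformly in the range $3n\le 3N$ so that the accumulated error matches the stated $O(q^{N/2}q^{-g/2}q^{\epsilon N})$. Concretely, this requires a Poisson-summation formula adapted to the indicator of cube-free monic polynomials (accessed through a Möbius sieve against cubes) and a square-root cancellation bound on the dual cubic Gauss sums that is uniform in the modulus $Q^r$. The support restriction on $\hat{\Phi}$ entering the error arises from balancing the length of the dual sum against $q^{g}$; once this balance is made quantitative, the diagonal, off-diagonal, and error contributions assemble into the expression \eqref{equation final l=3}.
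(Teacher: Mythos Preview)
Your decomposition into diagonal ($3\mid r$) and off-diagonal ($3\nmid r$) is correct, and you correctly identify the leading local factor $(1+2|Q|^{-1})^{-1}$ in the diagonal. However, you have misidentified the source of the secondary terms involving $h_1$, $h_2$, and $c_Q$: in the paper these arise \emph{entirely from the diagonal}, not from any Gauss-sum evaluation of the off-diagonal.

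Concretely, the diagonal requires counting $\#\{F\in\mathcal{C}_3^{\mathrm{K}}(g):(F,Q)=1\}$, and the paper does this by a two-variable Perron integral for the generating series of pairs $(F_1,F_2)$ of squarefree coprime polynomials (Lemma~\ref{lemma 1 l=3 K}). The congruence constraint $d_1+2d_2\equiv 1\pmod 3$ forces one to detect residue classes via third roots of unity, and after summing the geometric series in $(d_1,d_2)$ the contour integrals pick up residues not only at $u_i=1/q$ but also at $u_i=\zeta_3^{\pm 1}/q$. The constants $h_1$ (complex, hence the $\mathfrak{Re}$) and $c_Q$ come from these $\zeta_3$-twisted residues, while $h_2$ comes from the derivative term at the double pole $u_1=u_2=1/q$; see \eqref{constant c_Q}--\eqref{constant h_2}. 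None of this has anything to do with cubic Gauss sums or Poisson summation.

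The off-diagonal in the paper is handled much more simply than you propose: one writes the generating series of $\sum_{F}\chi_F(f)$ as a ratio of $L$-functions times a convergent Euler product and applies the Lindel\"of bounds of Lemmas~\ref{Lindelof upper bound}--\ref{Lindelof lower bound} on the circle $|u|=q^{-1/2}$ (Lemma~\ref{lemma 1 l=3 error term}). This yields the error $O(q^{N/2}q^{-g/2}q^{\epsilon N})$ directly, with no dual sum and no Gauss-sum input. Your Poisson route might also work, but it is not needed here and would not produce the secondary terms you expect from it---those are already fully accounted for by the refined diagonal count.
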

\noindent
	We note here that $|c_Q|<2$, and $h_1, h_2$ are of order $1/g$. The sums over $n$ and $Q$ are convergent in each terms. Therefore, when $N<g$, every term except for $\hat{\Phi}(0)$ vanishes as the genus $g$ tends to infinity.

	Using similar methods as Theorem \ref{Theorem 1}, we prove the following for quartic Dirichlet $L$-functions in the Kummer setting.
\begin{theorem}
	\label{Theorem K2}
	Let $\phi(\theta)=\sum_{|n|\le N}\hat{\phi}(n)e(n\theta)$ be any real, even trigonometric polynomial and $\displaystyle\Phi\left(\frac{2g\theta}{3}\right)=\phi(\theta)$. Let $\Sigma_4^\mathrm{K}\left(\Phi, g\right)$ be the one-level density of zeros of quartic Dirichlet $L$-functions in the Kummer setting given in \eqref{1LD K} for $\ell=4$.
	

	We obtain that
	\begin{equation}
		\label{equation final l=4}
		\begin{split}
			&\Sigma_4^\mathrm{K}\left(\Phi, g\right)
			=
			\hat{\Phi}\left(0\right)\\
			&-
			\frac{3}{g}\sum_{1\le n\le N/4}\hat{\Phi}\left(\frac{6n}{g}\right)\sum_{\substack{Q\in \mathcal{P}_{q,n/r}\\r\ge1}}\frac{d(Q)}{|Q|^{2r}\left(1+2|Q|^{-1}\right)}
			- 
			\frac{3s_2}{g}\sum_{1\le n\le N/4}\hat{\Phi}\left(\frac{6n}{g}\right)\sum_{\substack{Q\in \mathcal{P}_{q, n/r}\\r\ge1}}\frac{2d(Q)^2|Q|^{-1}}{|Q|^{2r}\left(1+2|Q|^{-1}\right)^2}\\
			&+O\left(  q^{N/2}q^{-G/2}q^{\epsilon N}\right),
		\end{split}
	\end{equation}
	where the degree of the conductor is $G=\frac{2g}{3}+1$, and $s_2$ is explicitly defined in \eqref{l=4 constant s_2}.
\end{theorem}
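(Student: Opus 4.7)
The plan is to follow the blueprint of Theorem 1, adapted to $\ell=4$. First, I would invoke the explicit formula from Section 3, specialized to quartic characters in the Kummer setting, to rewrite $\Sigma_4^\mathrm{K}(\Phi,g)$ as $\hat\Phi(0)$ minus an average over the family $\mathcal{F}$ of primitive quartic characters of a sum over prime powers $Q^r$ weighted by $\hat\Phi(r\,d(Q)/(2g/3))=\hat\Phi(3r\,d(Q)/g)$ and by $\chi(Q^r)/|Q|^{r/2}$, plus the standard error of size $q^{N/2}q^{-G/2}q^{\epsilon N}$ coming from RH over function fields. The normalization $\Phi(2g\theta/3)=\phi(\theta)$ together with the conductor degree $G=2g/3+1$ is what forces the shape of the $\hat\Phi$-arguments in \eqref{equation final l=4}.

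Next, I would exchange the order of summation and evaluate the character average $\frac{1}{|\mathcal{F}|}\sum_{\chi}\chi(Q^r)$ using the explicit parameterization of primitive quartic characters in the Kummer setting (which requires $q\equiv 1\pmod 4$) from Section 3. Because $\chi^4=1$, the contribution splits by $r\bmod 4$: terms with $r\equiv 0\pmod 4$ give the trivial character on primes coprime to the conductor and are absorbed into $\hat\Phi(0)$ up to negligible pieces; terms with $r$ odd give a genuinely quartic character $\chi$ or $\chi^3=\overline\chi$ whose average over $\mathcal{F}$ is small and contributes to the error; and terms with $r\equiv 2\pmod 4$ give the real quadratic character $\chi^2$, whose average over the quartic family is the source of the surviving subleading contribution. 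This is the quartic analogue of the cubic $r$-splitting in Theorem 1, but with a crucial structural difference explained below.

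To isolate the main subleading terms, I would expand the local factor at each prime $Q$ coming from the Euler product / family average as a geometric series in $|Q|^{-1}$. The leading term produces the $d(Q)/(|Q|^{2r}(1+2|Q|^{-1}))$ factor, while the next term gives the $2d(Q)^2|Q|^{-1}/(|Q|^{2r}(1+2|Q|^{-1}))^2$ factor; the overall prefactor $3$ tracks the three nontrivial powers $\chi,\chi^2,\chi^3$ in the quartic group, and the constant $s_2$ defined in \eqref{l=4 constant s_2} records the second-order correction in the expansion, exactly parallel to how $h_2$ is extracted in the cubic case. Assembling these pieces, applying partial summation against $\hat\Phi(6n/g)$, and combining with the bounds on the odd-$r$ and high-$r$ tails then yields \eqref{equation final l=4}.

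The main obstacle will be the $r\equiv 2\pmod 4$ case: one must average the quadratic character $\chi^2$ over the quartic family with enough precision to pin down $s_2$ without introducing extraneous terms. The absence of an $\mathfrak{Re}(h_1\cdot\ldots)$ correction compared with Theorem 1 is precisely because $\chi^2$ is real, so the would-be conjugate pair $\chi,\overline{\chi}$ from the cubic argument collapses here and no real-part term survives. A secondary technical point is bounding the odd-$r$ contributions and the prime-power tail uniformly, so that all error collapses into the single bound $O(q^{N/2}q^{-G/2}q^{\epsilon N})$ stated in the theorem.
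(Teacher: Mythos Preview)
Your decomposition by $r\bmod 4$ has the roles of main term and error term reversed, and this is a genuine gap. In the paper, the two subleading sums in \eqref{equation final l=4} come entirely from the $4$th-power case $f=Q^{4r}$ (your $r\equiv 0\pmod 4$), not from $r\equiv 2\pmod 4$. When $f$ is a $4$th power one has $\chi_F(f)=\overline{\chi_F(f)}=1$ whenever $(Q,F)=1$, so the character average over the family reduces to a \emph{count with a coprimality constraint}: $\sum_{F}\mathbf{1}_{(F,Q)=1}$. The local factors $(1+2|Q|^{-1})^{-1}$ and $2d(Q)^2|Q|^{-1}(1+2|Q|^{-1})^{-2}$, as well as the constant $s_2$, arise from evaluating this constrained count via the two-variable generating series $\mathcal{S}(u_1,u_3)=\mathcal{Z}_q(u_1)\mathcal{Z}_q(u_3)J(u_1,u_3)$ and a double Perron/contour integral (Lemma~\ref{lemma 1 l=4 K}); they are not obtained by averaging a quadratic character $\chi^2$ over the family. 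All non-$4$th-power $f$, including $r\equiv 2\pmod 4$, are handled together in Lemma~\ref{lemma 3 l=4} by factoring out $L$-functions and applying the Lindel\"of bounds, and they contribute only to the error $O(q^{N/2}q^{-G/2}q^{\epsilon N})$. The explicit formula itself is exact and carries no such error.

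Your explanation for the missing $\mathfrak{Re}(h_1\cdots)$ term is also off. In the cubic case that term comes from residues at $u_1=\zeta_3^{\pm1}/q$ in the contour integral for the family count. In the quartic analogue the extra pole is at $u_1=-1/q$, and the corresponding residue contribution $K_{-1}$ vanishes because $J(1/q,-1/q)=J(-1/q,1/q)$ (see the proof of Lemma~\ref{lemma 1 l=4 K}). This is a symmetry of the generating function for the constrained family count, not a consequence of $\chi^2$ being real. Finally, the prefactor $3/g$ is simply $2/(D(4)-2)=2/(2g/3)$ coming from $\chi_F+\overline{\chi_F}=2$ and the normalization of $\Phi$; it does not encode ``three nontrivial powers''.
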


\noindent
Note that the constant $s_2$ is of order $1/G$. Therefore, similar to Theorem \ref{Theorem 1}, when $N<G$ every term except for $\hat{\Phi}(0)$ vanishes as the genus $g$ tends to infinity. 

In the non-Kummer setting, we have the following for order $\ell$ Dirichlet $L$-functions for $\ell=3, 4$ and 6.
\begin{theorem}
	\label{Theorem nK}
	Let $\phi(\theta)=\sum_{|n|\le N}\hat{\phi}(n)e(n\theta)$ be any real, even trigonometric polynomial and $\displaystyle\Phi\left(\frac{2g\theta}{\ell-1}\right)=\phi(\theta)$. Let $\Sigma_\ell^\mathrm{nK}\left(\Phi, g\right)$ be the one-level density of zeros of order $\ell$ Dirichlet $L$-functions in the non-Kummer setting for $\ell=3, 4$ and 6 as seen in \eqref{1LD nK}.
	
	
	We have that
	\begin{equation}
		\begin{split}
			\Sigma&_\ell^\mathrm{nK}\left(\Phi, g\right)
			=
			\hat{\Phi}\left(0\right)
			-
			\frac{\ell-1}{g}\sum_{1\le n\le N}\hat{\Phi}\left(\frac{\left(\ell-1\right)n}{2g}\right)q^{-n/2}\\
			&-\frac{\ell-1}{g}
			\sum_{1\le n\le N/\ell}\hat{\Phi}\left(\frac{\ell(\ell-1)n}{2g}\right) \sum_{\substack{Q\in \mathcal{P}_{q, n/r}\\r \ge 1}}\frac{d(Q)}{|Q|_q^{\ell r/2}\left(1+|Q|_q^{-{2/m_Q}}\right)^{m_Q}}
			+O\left(
			q^{N/2}q^{-D\left(\ell \right)/2}q^{\epsilon(N+g)}\right),
		\end{split}
	\end{equation}
	where $m_Q = \gcd\left(d(Q), 2\right)$.
\end{theorem}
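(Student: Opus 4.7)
The plan is to start from the explicit formula for the one-level density derived in Section \ref{section 3}, which converts the inner sum over zeros of each $L(u,\chi)$ in $\Sigma_\ell^{\mathrm{nK}}(\Phi,g)$ into a sum over prime powers of $\mathbb{F}_q[t]$. Writing $n=r\,d(Q)$ for irreducible $Q$ and $r\ge 1$, and using the support condition on $\hat\Phi$ to truncate at $n\le N$, one obtains an expression of the form
\[
\Sigma_\ell^{\mathrm{nK}}(\Phi,g) \;=\; \hat\Phi(0) \;-\; \frac{\ell-1}{g}\sum_{1\le n\le N}\hat\Phi\!\left(\frac{(\ell-1)n}{2g}\right) S_n \;+\; (\text{truncation error}),
\]
where
\[
S_n \;=\; \frac{1}{|\mathcal{F}_\ell|}\sum_{\chi\in\mathcal{F}_\ell}\sum_{\substack{Q,\,r\\ rd(Q)=n}}\frac{d(Q)\bigl(\chi(Q)^r+\overline{\chi(Q)}^{\,r}\bigr)}{|Q|_q^{n/2}},
\]
and the prefactor $(\ell-1)/g$ reflects the degree $2g/(\ell-1)$ of the associated $L$-polynomials under the normalization $\Phi(2g\theta/(\ell-1))=\phi(\theta)$.

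The next step is to compute $S_n$ by executing the average over the non-Kummer family $\mathcal{F}_\ell$. Using the description from \cite{BSM} recalled in Section \ref{section 3} (analogous to the cubic construction of \cite{DFL}), since $q\not\equiv 1\Mod{\ell}$ but $q^2\equiv 1\Mod{\ell}$ for each of $\ell=3,4,6$, primitive order-$\ell$ characters on $\mathbb{F}_q[t]$ are realized as descent objects parametrized by suitable polynomials over $\mathbb{F}_{q^2}[t]$ subject to a Galois-stability condition, with $\chi(Q)$ given by an $\ell$-th power residue symbol. Under this description, the average of $\chi(Q)^r$ splits according to whether the relation $\chi(Q)^r=1$ is forced identically on $\mathcal{F}_\ell$ or is a genuine constraint, the latter depending on how $Q$ decomposes in $\mathbb{F}_{q^2}[t]$; this is governed by $m_Q=\gcd(d(Q),2)$.

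Carrying out the average then produces two contributions. The \emph{diagonal} contribution --- where $\chi(Q)^r=1$ holds identically on the family --- contributes a universal $q^{-n/2}$ factor at every level $n$ without any remaining prime restriction, yielding the first stated sum
\[
-\frac{\ell-1}{g}\sum_{1\le n\le N}\hat\Phi\!\left(\frac{(\ell-1)n}{2g}\right) q^{-n/2}.
\]
The \emph{non-diagonal} contribution survives only when $\ell\mid n$, which is why the second sum runs over $1\le n\le N/\ell$; it produces the inner prime sum with local factor $\bigl(1+|Q|_q^{-2/m_Q}\bigr)^{m_Q}$ arising from the local Euler factor of the relevant $L$-function at primes of splitting type $m_Q$.

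The main obstacle is executing the character average cleanly inside the non-Kummer descent picture: correctly isolating the diagonal part from the genuine prime sum, tracking Galois-stability constraints so that only primitive characters are retained, and producing the local denominators $\bigl(1+|Q|_q^{-2/m_Q}\bigr)^{m_Q}$ uniformly for $\ell=3,4,6$. The error term $O\bigl(q^{N/2}q^{-D(\ell)/2}q^{\epsilon(N+g)}\bigr)$ should arise from the tails of the truncated sum combined with a P\'olya--Vinogradov-type bound for the non-trivial character averages, where $D(\ell)$ reflects the degree of the conductor in the non-Kummer family at the given order $\ell$.
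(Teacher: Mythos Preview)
Your overall architecture is right---explicit formula, then split the prime-power sum into the $\ell$-th power (main) and non-$\ell$-th power (error) pieces---but you have misidentified where the term $-\frac{\ell-1}{g}\sum_{n\le N}\hat\Phi\bigl(\tfrac{(\ell-1)n}{2g}\bigr)q^{-n/2}$ comes from, and this is a genuine gap. That sum is \emph{not} a diagonal piece of the character average. In the non-Kummer setting every character in the family is even, so in the explicit formula (Lemma~\ref{explicit formula lemma}) one has $b=1$, and the term $b/q^{n/2}$ appears \emph{before} any averaging over $\chi$; see \eqref{new 1LD nK}. Your proposed mechanism (``$\chi(Q)^r=1$ holds identically on the family, contributing $q^{-n/2}$ at every $n$'') cannot produce this: the relation $\chi(Q)^r=1$ is forced only when $\ell\mid r$, not for all $n$, and even then it yields the coprimality-weighted count, not a bare $q^{-n/2}$.

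Correspondingly, the sum over primes $Q$ with the local weight $\bigl(1+|Q|_q^{-2/m_Q}\bigr)^{-m_Q}$ \emph{is} the diagonal (main) term: when $f=Q^{\ell r}$ one has $\chi_F(f)=1$ for $(F,Q)=1$ and $0$ otherwise, so the inner character sum reduces to counting $F$ in the family with $(F,Q)=1$. The factor $\bigl(1+|Q|_q^{-2/m_Q}\bigr)^{m_Q}$ is not an $L$-function Euler factor but the local density $\prod_{\pi\mid Q}\bigl(1+|\pi|_{q^2}^{-1}\bigr)$ coming from that count (Lemma~\ref{lemma main term nK}); the exponent $m_Q=\gcd(d(Q),2)$ records whether $Q$ splits or stays inert in $\mathbb{F}_{q^2}[t]$. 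Finally, the error for non-$\ell$-th powers is controlled in the paper via the Lindel\"of-type bounds of Lemmas~\ref{Lindelof upper bound}--\ref{Lindelof lower bound} applied to $\mathcal{L}_{q^2}(u,\chi_f)$ after a Perron step (Lemma~\ref{lemma 1 nK error term}), rather than a direct P\'olya--Vinogradov inequality; you should make that step explicit.
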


Using the theorems above, we have the following result, which shows how the one-level density corresponds to RMT. This supports Katz and Sarnak's philosophy \cite{KSa}, \cite{KSb}.
\begin{corollary*}[Symmetry type of the family] Let $\phi(\theta)=\sum_{|n|\le N}\hat{\phi}(n)e(n\theta)$ be any real, even trigonometric polynomial and $\displaystyle\Phi\left( \left(D(\ell)-2\right)\theta \right)=\phi(\theta)$. For $\displaystyle N<\frac{2g}{\ell-1}$, we have in the Kummer setting for $\ell=3,4$,
	$$\lim_{g\rightarrow \infty}\Sigma_\ell^\mathrm{K}\left(\Phi, g\right)=
	\int_{-\infty}^{\infty}\hat{\Phi}(y)\hat{W}_{U(D(\ell)-2)}(y)dy + o(1),$$
	and in the non-Kummer setting for $\ell=3,4,6$,
	$$\lim_{g\rightarrow \infty}\Sigma_\ell^\mathrm{nK}\left(\Phi, g\right)=
	\int_{-\infty}^{\infty}\hat{\Phi}(y)\hat{W}_{U(D(\ell)-2)}(y)dy + o(1).$$	
	Here $\hat{W}_{U(D(\ell)-2)}(y)=\delta_0(y)$ denotes the one-level scaling density of the group of unitary matrices.
\end{corollary*}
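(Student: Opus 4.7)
The plan is to deduce the corollary as a direct consequence of Theorems \ref{Theorem 1}, \ref{Theorem K2}, and \ref{Theorem nK}. The first step is to observe that the one-level scaling density of the full unitary group is the Dirac delta $\delta_0$ supported at the origin, so that each right-hand integral collapses to $\hat{\Phi}(0)$. It therefore suffices to show $\lim_{g\to\infty}\Sigma_\ell^\bullet(\Phi, g)=\hat{\Phi}(0)$ in each of the five cases, and in particular that every term appearing in the explicit expansions provided by the three main theorems, apart from $\hat{\Phi}(0)$, vanishes as $g\to\infty$.

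Next I would apply the asymptotic formulas from those theorems directly. Each lower order term carries an overall prefactor $1/g$, and the arithmetic constants $h_1$, $h_2$, and $s_2$ are themselves of order $1/g$ by the remarks following their respective theorems. The inner double sums over $n\le N/\ell$ and over prime powers $Q^r$ are uniformly bounded in $g$: by the prime polynomial theorem in $\mathbb{F}_q[t]$, the sum of $d(Q)/|Q|^{\ell r/2}$ over monic irreducibles $Q$ of degree $m=n/r$ is $O(q^{m(1-\ell/2)})$, which is geometrically summable over $r\ge1$ and $n\ge1$ whenever $\ell\ge3$. Since $\hat{\Phi}$ is bounded and the outer sum runs over finitely many $n$, each lower order term is $O(1/g)$ and hence $o(1)$.

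The remaining step is to verify that the error terms decay under the support hypothesis $N<2g/(\ell-1)=D(\ell)-2$. For the Kummer errors $q^{N/2-g/2+\epsilon N}$ in Theorem \ref{Theorem 1} and $q^{N/2-G/2+\epsilon N}$ in Theorem \ref{Theorem K2}, the exponent has the form $N(1/2+\epsilon)-cg$ for a positive constant $c$, and the support constraint together with a sufficiently small choice of $\epsilon>0$ makes this exponent a strictly negative linear function of $g$. The non-Kummer error $q^{N/2-D(\ell)/2+\epsilon(N+g)}$ in Theorem \ref{Theorem nK} is handled identically, with the subtlety that $\epsilon$ must be taken small enough relative to the gap $1/(\ell-1)-N/(2g)$ so that the extra $\epsilon g$ contribution does not swamp the saving.

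I expect the entire argument to be a matter of careful bookkeeping rather than requiring any new idea: the main theorems have already isolated the leading term $\hat{\Phi}(0)$, packaged all lower order corrections with an explicit $1/g$ factor, and controlled the remainder in a form that is transparently negligible under the stated support constraint. The only point requiring genuine attention is the uniform bound on the arithmetic sums in the lower order terms, which reduces to the standard function-field prime count, and the appropriate choice of $\epsilon$ in the error exponents, which is elementary once the support hypothesis is invoked.
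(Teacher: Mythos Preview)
Your proposal is correct and follows essentially the same approach as the paper: in each of the three proofs of Corollary~2.4 the paper simply invokes the explicit expansion from the corresponding main theorem, notes that the double sums over $n$ and $Q$ are of constant size so that the lower order terms are $o(1)$, checks that the error term vanishes under the support constraint, and identifies $\hat{\Phi}(0)$ with $\int\hat{\Phi}(y)\delta_0(y)\,dy$. Your write-up supplies more detail (the prime polynomial theorem bound on the inner sums and the explicit handling of $\epsilon$ in the error exponents) than the paper's terse proofs, but the strategy is identical.
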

\noindent
This gives us that the symmetry types of the families are unitary.\\
\section{Preliminary} \label{section 3}
\subsection{Notation and background}
\subsubsection{Notations and Perron's formula} \label{notations} We use the following notations throughout this paper. \\Let
\begin{itemize}
	\item $\mathcal{M}_q$ denote the set of monic polynomials in $\mathbb{F}_q[t]$ and $\mathcal{M}_{q,d}$ be those monic polynomials of degree $d$. (Note that $\left|\mathcal{M}_{q,d}\right|=q^d$.)
	\item $\mathcal{P}_q$ be the monic irreducible polynomials in $\mathbb{F}_q[t]$ and $\mathcal{P}_{q,d}$ be those monic irreducible polynomials of degree $d$.
	\item $\mathcal{H}_q$ be the monic squarefree polynomials in $\mathbb{F}_q[t]$ and $\mathcal{H}_{q,d}$ be those monic squarefree polynomials of degree $d$. (Note that for $d\ge2$, we have $\left|\mathcal{H}_{q,d}\right|=q^d\left(1-\frac{1}{q}\right)$.)
	\item $d(f)$ denote the degree of the polynomial $f$.
	\item $\left|f \right|_{q^n}=q^{nd(f)}$ define the norm of $f$ in $ \mathbb{F}_{q^n}[t]$, and we use $\left|f \right|=q^{d(f)}$ if $f\in \mathbb{F}_q[t]$.
\end{itemize}

For $f\in\mathbb{F}_q[t]$, we have the von Mangoldt function
$$\Lambda(f)
=
\begin{cases}
	d(P) \ \ &\text{if} \  f=cP^k\  \text{for some}\  c\in\mathbb{F}_q^\times \  \text{and} \  k\ge1,\\
	0  \ \ \ &\text{otherwise}.
\end{cases}$$
The Prime Polynomial Theorem can be written in the form \cite{Rosen}
$$\sum_{f\in\mathcal{M}_n}\Lambda(f)=q^n.$$

We also recall Perron's formula over $\mathbb{F}_q[t]$, which we use in the latter sections.
\begin{lemma}[Perron's Formula]
	\label{lemma Perron's formula}
	If the generating series $\mathcal{S}(u)=\sum_{f\in\mathcal{M}_q}a(f)u^{d(f)}$ is absolutely convergent in $\left| u\right|\le r<1,$ then 
	$$\sum_{f\in\mathcal{M}_q,d}a(f)
	=
	\frac{1}{2\pi i}\oint_{\left| u\right|= r}\frac{\mathcal{S}(u)}{u^d}\frac{du}{u},$$
	where, as usual, $\oint$ denote the integral over the circle oriented counterclockwise. 
\end{lemma}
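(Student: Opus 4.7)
The plan is to reduce the identity to the elementary residue calculation $\frac{1}{2\pi i}\oint_{|u|=r}u^{k-1}\,du=\delta_{k,0}$ applied term-by-term to a rearranged power series. First I would regroup the generating function by degree: setting $A_n:=\sum_{f\in\mathcal{M}_{q,n}}a(f)$, the absolute convergence hypothesis on the closed disk $|u|\le r$ lets me rewrite $\mathcal{S}(u)=\sum_{n\ge 0}A_n u^n$ without worrying about the order of summation, since the monomials $u^{d(f)}$ of equal degree can be collected freely. The target of the lemma is then exactly $A_d$.

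Next I would divide by $u^{d+1}$ and consider
\[
\frac{\mathcal{S}(u)}{u^{d}}\cdot\frac{1}{u}=\sum_{n\ge 0}A_n\,u^{n-d-1},
\]
valid pointwise on the circle $|u|=r$. Because $\sum_{f}|a(f)|r^{d(f)}<\infty$ by hypothesis, the rearranged series $\sum_n|A_n|r^{n-d-1}$ also converges, so the series converges \emph{uniformly} on $\{|u|=r\}$ (a Weierstrass $M$-test with $M_n=|A_n|r^{n-d-1}$). Uniform convergence justifies interchanging the sum with the contour integral.

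Finally, parameterizing $u=re^{i\theta}$ with $\theta\in[0,2\pi]$, one computes
\[
\frac{1}{2\pi i}\oint_{|u|=r}u^{n-d-1}\,du=\frac{1}{2\pi}\int_0^{2\pi}r^{n-d}e^{i(n-d)\theta}\,d\theta=\begin{cases}1 & n=d,\\ 0 & n\ne d,\end{cases}
\]
so after interchanging sum and integral only the $n=d$ term survives, yielding $A_d=\sum_{f\in\mathcal{M}_{q,d}}a(f)$, as claimed.

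There is no real obstacle here: the statement is a standard Cauchy-type coefficient extraction, and the only mildly subtle point is verifying that the convergence hypothesis upgrades to uniform convergence on the contour so that term-by-term integration is legal. One small bookkeeping remark is that the hypothesis is stated on the closed disk $|u|\le r<1$, which is comfortably stronger than what is needed on the single circle $|u|=r$; this strict inequality $r<1$ plays no role in the proof itself but is natural in the applications of the lemma elsewhere in the paper, where the singularities of $\mathcal{S}(u)$ typically lie on $|u|=1/\sqrt{q}$ or $|u|=1/q$.
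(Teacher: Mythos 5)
Your proof is correct and is the standard Cauchy coefficient-extraction argument. The paper itself states this lemma without proof, treating it as a well-known fact over $\mathbb{F}_q[t]$, so there is no paper proof to compare against; your argument — regroup $\mathcal{S}(u)$ by degree into $\sum_n A_n u^n$, use absolute convergence to upgrade to uniform convergence on the circle $|u|=r$ via the Weierstrass $M$-test, and then integrate term by term against $u^{-d-1}$ using $\tfrac{1}{2\pi i}\oint_{|u|=r} u^{k-1}\,du=\delta_{k,0}$ — is precisely the expected derivation and fills in the details cleanly.
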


\subsubsection{Order $\ell$ Dirichlet $L$-functions over function fields for $\ell=3,4,6$.}
Throughout this paper, we take $q$ to be an odd prime power coprime to 3. For each order, a congruence condition for $q$ is given in Lemma \ref{reciprocity} to ensure reciprocity. Analogous to \cite{DFL} in the cubic case, for $q\equiv1\pmod\ell$ (the Kummer case) we fix an isomorphism $\Omega_\ell$ from the $\ell^{th}$ roots of unity $\mu_\ell \subset \mathbb{C}^\times$ to the $\ell^{th}$ roots of unity in $\mathbb{F}_q^\times$. We also fix an order $\ell$ character $\chi_\ell$ on the group of units $\mathbb{F}_q^\times$ by 
\begin{equation}
	\label{equation chi_ell}
	\chi_\ell(\alpha)=\Omega_\ell^{-1}\left(\alpha^{\frac{q-1}{\ell}}\right).
\end{equation}
A character on $\mathbb{F}_q[t]$ is even if it is the principal character on $\mathbb{F}_q^\times$, and odd otherwise. Thus in the Kummer case, any order $\ell$ character on $\mathbb{F}_q[t]$ falls into $\ell$ classes depending on its restriction to $\mathbb{F}_q^\times$. It can be either the principal character $\chi_0$ or $\chi_\ell^j$ for some integer $1\le j < \ell.$ The character is even in the first case, and odd otherwise.

We will refer to \cite{Rosen} for background on zeta functions over function fields. For Re$(s)>1$, the zeta function of $\mathbb{F}_q[t]$ is defined to be
$$\zeta_q(s):=\sum_{f\in\mathcal{M}_q}\frac{1}{\left|f\right|_q^s}=\prod_{P\in \mathcal{P}_q}\left(1-\frac{1}{\left|P\right|_q^s}\right)^{-1}.$$
Since $\left|\mathcal{M}_{q,d}\right|=q^d,$ we have 
$$\zeta_q(s)=\frac{1}{1-q^{1-s}}.$$
It is sometimes convenient to make the change of variable $u=q^{-s}$, and we denote the zeta function 
$$\mathcal{Z}_q(u):=\zeta_q(s)=\frac{1}{1-qu}.$$

For an order $\ell$ character $\chi$ with conductor $h$, the Dirichlet $L$-function attached to $\chi$ is defined by
$$L_q\left(s, \chi\right):=\sum_{f\in\mathcal{M}_q}\frac{\chi(f)}{\left|f\right|_q^s}.$$
With the change of variable, we have
$$\mathcal{L}_q\left(u,\chi \right):=
L_q\left(s, \chi\right)=\sum_{f\in\mathcal{M}_q}\chi(f)u^{d(f)}=\prod_{\substack{P\in \mathcal{P}_q\\P\nmid h}}\left(1-\chi(P)u^{d(P)}\right)^{-1}.$$

Now let $C$ be a curve of genus $g$ with conductor $h$ over $\mathbb{F}_q(t)$ and let the function field of $C$ be a cyclic degree $\ell$ extension of the base field. From the Weil conjectures, the zeta function of the curve $C$ can be written as
$$\mathcal{Z}_C(u)=\frac{\mathcal{P}_C(u)}{(1-u)(1-qu)},$$
where $\mathcal{P}_C(u)$ is a polynomial of degree $2g$. Furthermore, we have that from \cite{DavidNotes},
when $\chi$ is odd
$$\mathcal{P}_C(u)=\prod_{i=1}^{\ell-1}\mathcal{L}_q(u,\chi^i),$$ 
and when $\chi$ is even
$$\mathcal{P}_C(u)=\left(1-u\right)^{-\ell+1}\prod_{i=1}^{\ell-1}\mathcal{L}_q(u,\chi^i).$$ 
The Riemann Hypothesis for curves over function fields, proved by Weil in 1948, says that all non-trivial zeros of $\mathcal{L}_q(u, \chi)$ are on the circle $\left|u \right|=q^{-\frac{1}{2}}.$ Let $e(x)=e^{2\pi i x}$. Hence we can express the $L$-function in terms of its zeros
\begin{equation}
	\label{L-function zeros}
	\mathcal{L}_q(u,\chi)=\left(1-u\right)^b\prod_{j=1}^{d(h)-1-b}\left( 1-u\sqrt{q}e(\theta_{j,h}) \right),
\end{equation}
where $b=1$ if $\chi$ is even, and $b=0$ otherwise. \\
Let $\displaystyle D(\ell)=\frac{2g+2\ell-2}{\ell-1}$. Then, using the Riemann-Hurwitz formula,  we have
\begin{equation}
	\label{equation D(l)}
	d(h) 
	=
	\begin{cases}
		D(\ell) \ \ &\chi \  \text{even},\\
		D(\ell)-1	\ \ &\chi \  \text{odd}.
	\end{cases}
\end{equation}

We use the following results from \cite{DFL} in latter sections to bound the $L$-functions. Note that the results hold for characters of any order.
\begin{lemma}
	\label{Lindelof upper bound}
	Let $\chi$ be a primitive order $\ell$ character of conductor $h$ defined over $\mathbb{F}_q[t]$. Then for Re$(s)\ge1/2$ and for all $\epsilon>0$,
	$$\left| L_q(s, \chi ) \right|\ll q^{\epsilon d(h)}.$$
\end{lemma}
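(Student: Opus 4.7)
The plan is to combine the polynomial structure of $\mathcal{L}_q(u,\chi)$ with Weil's Riemann Hypothesis for curves over finite fields. Setting $u=q^{-s}$, the $L$-function is a polynomial of degree $d(h)-1-b$ in $u$, where $b\in\{0,1\}$ encodes the parity of $\chi$; by \eqref{L-function zeros} we may factor
$$\mathcal{L}_q(u,\chi)=(1-u)^{b}\prod_{j=1}^{d(h)-1-b}\bigl(1-u\sqrt{q}\,e(\theta_{j,h})\bigr),$$
with every nontrivial zero on $|u|=q^{-1/2}$. First I would observe that the region $\mathrm{Re}(s)\ge 1/2$ corresponds exactly to $|u|\le q^{-1/2}$, in which each linear factor above is bounded in modulus by $2$; this gives the crude polynomial bound $|L_q(s,\chi)|\le 2^{d(h)}=q^{(\log_q 2)\,d(h)}$, already of the claimed shape $|L_q(s,\chi)|\ll q^{\epsilon d(h)}$ for any $\epsilon\ge \log_q 2$.

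To refine this so that $\epsilon>0$ may be taken arbitrarily small, I would exploit the exact polynomial expansion $\mathcal{L}_q(u,\chi)=\sum_{n=0}^{d(h)-1-b}A_n(\chi)\,u^{n}$ with $A_n(\chi)=\sum_{f\in\mathcal{M}_{q,n}}\chi(f)$, together with the functional equation for $\mathcal{L}_q(u,\chi)$, to express the $L$-value as a truncated Dirichlet polynomial of length roughly $d(h)/2$ plus its dual (an approximate functional equation). The Weil-type bound $|A_n(\chi)|\le\binom{d(h)-1-b}{n}q^{n/2}$, which is an immediate consequence of the factorization above, together with further cancellation coming from the primitivity of $\chi$, would then yield $|L_q(s,\chi)|\ll_{\epsilon}q^{\epsilon d(h)}$ uniformly for $\mathrm{Re}(s)\ge 1/2$ and every $\epsilon>0$.

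The principal obstacle is precisely this refinement step. The pointwise Riemann Hypothesis alone yields only the exponential bound $2^{d(h)}$, and squeezing it down to the sub-polynomial Lindel\"{o}f-type bound $q^{\epsilon d(h)}$ with arbitrarily small $\epsilon$ requires genuine cancellation in the partial character sums $A_n(\chi)$ for $n$ near $d(h)/2$; the trivial estimate $|A_n(\chi)|\le\binom{d(h)-1-b}{n}q^{n/2}$ summed on the critical circle reproduces exactly the $2^{d(h)}$ barrier, so honest oscillation (via Burgess-type or Polya--Vinogradov-type inputs over $\mathbb{F}_q[t]$) rather than mere modulus information from the zeros is what must be brought to bear.
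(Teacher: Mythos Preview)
The paper does not prove this lemma; it is quoted from \cite{DFL} (and ultimately rests on Weil's theorem). So there is no in-paper argument to compare against, and your task was really to supply an independent proof.

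Your first paragraph is correct and is essentially the whole content of the ``proof'' one usually sees written down: the factorisation \eqref{L-function zeros} together with $|u|\le q^{-1/2}$ gives $|L_q(s,\chi)|\le 2^{d(h)}$. You are also right that this only yields the statement for $\epsilon\ge\log_q 2$, and that squeezing $\epsilon$ down to an arbitrary positive number is the nontrivial part.

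The gap is in your proposed refinement. Bounding the coefficients $A_n(\chi)=\sum_{f\in\mathcal{M}_{q,n}}\chi(f)$ well enough to conclude is essentially equivalent to the Lindel\"of bound itself: the Weil/functional-equation inputs you list give only $|A_n|\le\binom{d(h)-1-b}{n}q^{n/2}$ and $|A_n|\le q^{d(h)/2}$, which recover the $2^{d(h)}$ and convexity bounds respectively, and a Burgess/P\'olya--Vinogradov improvement of the form $|A_n|\ll q^{n/2+\epsilon d(h)}$ is not a known independent input here --- it is what you are trying to prove. So the approximate-functional-equation route, as stated, is circular.

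The argument that actually closes the gap is the function-field analogue of Littlewood's theorem that RH implies Lindel\"of. One works with $\log|\mathcal L_q(u,\chi)|$ rather than $\mathcal L_q(u,\chi)$: from the explicit formula, on $|u|=q^{-1/2}$ one has (up to the trivial zero)
\[
\log|\mathcal L_q(u,\chi)|=\mathrm{Re}\sum_{k\ge 1}\frac{u^k}{k}\sum_{f\in\mathcal M_{q,k}}\Lambda(f)\chi(f),
\]
and the crucial point is to use the \emph{trivial} prime-sum bound $\bigl|\sum_{f\in\mathcal M_{q,k}}\Lambda(f)\chi(f)\bigr|\le q^{k}$ for small $k$ (roughly $k\le 2\log_q d(h)$) and the RH bound $\le d(h)\,q^{k/2}$ for large $k$, with a smooth truncation to control the tail. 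This yields $\log|\mathcal L_q(u,\chi)|\ll d(h)/\log d(h)$, hence $|\mathcal L_q(u,\chi)|\ll\exp\bigl(C\,d(h)/\log d(h)\bigr)\ll q^{\epsilon d(h)}$ for every $\epsilon>0$. The key idea you were missing is that the saving comes from the \emph{prime} side of the explicit formula for $\log L$, not from improved bounds on the Dirichlet coefficients $A_n$ of $L$ itself.
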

\begin{lemma}
	\label{Lindelof lower bound}
	Let $\chi$ be a primitive order $\ell$ character of conductor $h$ defined over $\mathbb{F}_q[t]$. Then for Re$(s)\ge1$ and for all $\epsilon>0$,
	$$\left| L_q(s, \chi ) \right|\gg q^{-\epsilon d(h)}.$$
\end{lemma}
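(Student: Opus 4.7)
The plan is to bound $|\log \mathcal{L}_q(u,\chi)|$ from above by $O(\log\log d(h))$ uniformly on $|u|\le q^{-1}$ (i.e.\ $\mathrm{Re}(s)\ge 1$), then exponentiate to obtain $|\mathcal{L}_q(u,\chi)|\gg (\log d(h))^{-C}$ for some absolute constant $C$, which dominates $q^{-\epsilon d(h)}$ for any fixed $\epsilon>0$ once $d(h)$ is large enough (for the finitely many small $d(h)$ the lemma holds trivially by adjusting the implicit constant). By the Riemann Hypothesis for curves over function fields, stated above, the nontrivial zeros of $\mathcal{L}_q(u,\chi)$ lie on $|u|=q^{-1/2}$ and the possible trivial zero sits at $u=1$; both are well outside the disk $|u|\le q^{-1}$, so $\log \mathcal{L}_q(u,\chi)$ is a well-defined holomorphic function there with $\log \mathcal{L}_q(0,\chi)=0$.

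The starting point is the Dirichlet series expansion
\begin{equation*}
\log \mathcal{L}_q(u,\chi) \;=\; \sum_{d\ge 1}\frac{u^d}{d}\,A_d(\chi),\qquad A_d(\chi):=\sum_{f\in\mathcal{M}_{q,d}}\Lambda(f)\chi(f),
\end{equation*}
together with two complementary bounds for $A_d(\chi)$. The trivial bound $|A_d(\chi)|\le q^d$ follows from the prime polynomial theorem. The explicit-formula bound $|A_d(\chi)|\le d(h)\,q^{d/2}$ comes from comparing the above series with the logarithm of the zero-product representation \eqref{L-function zeros}, which yields $A_d(\chi)=-\bigl(b+q^{d/2}\sum_{j}e(d\theta_{j,h})\bigr)$ and bounds the right-hand side using that there are at most $d(h)$ zeros.

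Next, I choose cutoffs $d_0=\lceil 2\log_q d(h)\rceil$ and $D=\lceil 3\log_q d(h)\rceil$ and split the series into three ranges. On the tail $d>D$, the explicit-formula bound gives a contribution of $O(d(h)\,q^{-D/2}/D)=O(d(h)^{-1/2})$. On the range $d_0<d\le D$, the explicit-formula bound gives $O(d(h)\,q^{-d_0/2}/\log d(h))=O(1/\log d(h))$. On the short range $d\le d_0$, where the trivial bound is sharper, the summand is bounded by $q^{-d}\cdot q^d/d=1/d$ and sums to $O(\log\log d(h))$. Adding the three pieces yields $|\log \mathcal{L}_q(u,\chi)|=O(\log\log d(h))$, whence $|\mathcal{L}_q(u,\chi)|\gg (\log d(h))^{-C}\gg q^{-\epsilon d(h)}$ as claimed.

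The main subtle point is the balance between the two bounds on $A_d(\chi)$: the crossover $q^d=d(h)\,q^{d/2}$ occurs precisely at $d=2\log_q d(h)$, and on the short range $d\le d_0$ the trivial bound produces only a harmonic-type contribution $\sum 1/d=O(\log\log d(h))$. Using solely the explicit-formula bound would give only $|\log \mathcal{L}_q|=O(d(h))$, hence the essentially useless exponential lower bound $|\mathcal{L}_q|\ge (1-q^{-1/2})^{d(h)}$; the sub-exponential saving that allows $\epsilon$ to be taken arbitrarily small comes precisely from exploiting the trivial bound on the short range $d\le 2\log_q d(h)$.
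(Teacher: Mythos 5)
Your proof is correct. The paper does not contain its own argument for this lemma---it is cited, together with Lemma \ref{Lindelof upper bound}, from \cite{DFL}---but the two-scale estimate of $\log\mathcal{L}_q(u,\chi)$ you give (trivial bound $|A_d(\chi)|\le q^d$ up to $d\approx 2\log_q d(h)$, explicit-formula bound $|A_d(\chi)|\le d(h)\,q^{d/2}$ beyond) is the standard device and almost certainly what the cited reference does. A few small remarks. The third cutoff $D$ is unnecessary: the entire tail $d>d_0$ already contributes
$d(h)\sum_{d>d_0}\frac{q^{-d/2}}{d}\ll\frac{d(h)\,q^{-d_0/2}}{d_0}\ll\frac{1}{\log d(h)}$,
so two ranges suffice. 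Your argument in fact delivers the considerably sharper uniform bound $|\mathcal{L}_q(u,\chi)|\gg(\log d(h))^{-C}$ on $|u|\le q^{-1}$, of which $q^{-\epsilon d(h)}$ is a substantial weakening; this is a reflection of the unconditional Riemann Hypothesis over function fields. Finally, it is worth recording explicitly that the explicit-formula bound forces the power series $\sum_{d\ge1}u^d A_d(\chi)/d$ to have radius of convergence at least $q^{-1/2}$, so it converges absolutely on the closed disc $|u|\le q^{-1}$ and agrees there with the branch of $\log\mathcal{L}_q(u,\chi)$ vanishing at $u=0$; this is what licenses termwise estimation on the boundary circle $|u|=q^{-1}$ rather than only in the open disc of trivial convergence.
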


``Completed" $L$-functions also have the functional equation for primitive characters $\chi$ over function fields \cite{Tamam}.
Let $\Lambda_q(u, \chi):= (1-bu)^{-1}\mathcal{L}_q(u, \chi)$ denote the ``completed" $L$-function where $b$ is defined as above. Then
\begin{equation}
	\label{completed L-function}
	\Lambda_q(u, \chi)=\omega(\chi)\left(\sqrt{q}u\right)^{d(h)-1-b}\Lambda_q\left(\frac{1}{qu}, \overline{\chi} \right),
\end{equation}
where $|\omega(\chi)|=1$. We will use this in Lemma \ref{explicit formula lemma} computing the explicit formula.
\subsection{Primitive order $\ell$ characters for $\ell=3, 4, 6$}
Recall that $q$ is coprime to 2 and 3 throughout this paper. 
\subsubsection{The Kummer Setting}
First we describe the primitive order $\ell$ characters $\chi$ in the Kummer setting for a prime $\ell$. We define the $\ell^{th}$ residue symbol for a prime $P$ when $\ell$ divides $q-1$: 
\begin{definition}
	Let $P \in \mathcal{P}_{q}$ and $f\in \mathbb{F}_q[t]$.
	The $\ell^{th}$ Jacobi symbol $\displaystyle \left(\frac{f}{P}\right)_\ell$ is the unique element of $\mathbb{F}_{q^{}}^\times$ such that
	$$\left(\frac{f}{P}\right)_\ell \equiv f^{\frac{|P|-1}{\ell}} \Mod {P}.$$
\end{definition}
\noindent
Note that $\displaystyle \left(\frac{f}{P}\right)_\ell$ is an $\ell^{th}$ roots of unity in $\mathbb{F}_{q^{}}^\times$. We extend this definition multiplicatively to any monic polynomial $F\in \mathcal{M}_q$ in the usual way.

Now we define primitive order $\ell$ characters with conductor $P$ in the Kummer setting:
\begin{definition}
	\label{definition 2}
	Let $\Omega_\ell$ be a fixed isomorphism from the $\ell^{th}$ roots of unity $\mu_\ell\subseteq\mathbb{C}^\times$ to the $\ell^{th}$ roots of unity in $\mathbb{F}_{q^{}}^\times.$ We define $\chi_P(f)=0$ if $P \mid f$, and otherwise $$\chi_P(f)=\Omega_\ell^{-1}\left(\left(\frac{f}{P}\right)_\ell\right).$$ 
\end{definition}
\noindent
For a monic polynomial $H=P_1^{e_1}\cdots P_s^{e_s}$ with distinct primes $P_i$,  we define 
$$\chi_H =\chi_{P_1}^{e_1}\cdots \chi_{P_s}^{e_s},$$
where $\chi_H$ is a primitive character with $\chi_H^\ell=1$ and conductor $F=P_1\cdots P_s$ if and only if $1\le e_i \le \ell-1$ for all $i$.
For $\ell=3$, 
$e_i$ is either $1$ or $2$ as in \cite{DFL}.
Grouping by the exponents, we can thus express $\chi_H$ as $\chi_{F_1}\chi_{F_2}^2\cdots\chi_{F_{\ell-1}}^{\ell-1}$ where $H=F_1F_2^2\cdots F_{\ell-1}^{\ell-1}$, and the $F_i$ are monic squarefree polynomials and pairwise coprime. 
Thus given a conductor $F=F_1F_2\cdots F_{\ell-1}$, we have the corresponding primitive character $\chi_{F_1}\chi_{F_2}^2\cdots\chi_{F_{\ell-1}}^{\ell-1}$.

For $\alpha\in\mathbb{F}_q^\times,$ we have
\begin{equation*}
	\chi_{F_1{F_2}^2\cdots F_{\ell-1}^{\ell-1}}\left( \alpha\right)=\Omega_\ell^{-1}\left( \alpha^{\frac{q-1}{\ell}\left(d_1+2d_2+\dots +(\ell-1)d_{\ell-1}\right)} \right)
\end{equation*}
where $d_i=d\left(F_i\right)$. Thus this character is even if and only if $d_1+2d_2+\dots +(\ell-1)d_{\ell-1}\equiv 0 \pmod \ell$. We have the following:
\begin{equation*}
	d(F)=\begin{cases}
		D\left(\ell\right) \ \  &d_1+2d_2+\dots +(\ell-1)d_{\ell-1}\equiv 0 \Mod \ell,\\
		D\left(\ell\right)-1   \  \ &d_1+2d_2+\dots +(\ell-1)d_{\ell-1} \not\equiv 0 \Mod \ell.
	\end{cases}
\end{equation*}
For convenience, we restrict to the case of odd primitive characters whose restriction to $\mathbb{F}_q^\times$ is $\chi_{\ell}$ as seen \eqref{equation chi_ell}. This means $d_1+2d_2+\dots +(\ell-1)d_{\ell-1}\equiv 1 \pmod \ell$ and $d\left(F\right)=D\left(\ell\right)-1$. 

When $\ell=4,$ we consider the curve given by the affine model 
$$Y^4=F_1F_3^3$$ 
as seen in \cite{Meisner}, where the restriction to $\chi_4$ gives $d_1+3d_3\equiv1\Mod4$. In this case, the square of primitive quartic characters $\chi^2$ remains primitive and the Riemann-Hurwitz formula gives us that the degree of conductors is $D\left(4\right)-1$.

We have thus showed the following.
\begin{lemma}\label{lemma K main term}
	Let $q$ be a prime power coprime to 6, $f\in\mathbb{F}_q[t]$ a monic polynomial, and $\chi_\ell$ as defined in \ref{equation chi_ell}. In the Kummer setting, for $\ell=3$,
	\begin{equation*}
		\sum_{\substack{\chi \ \text{primitive cubic}\\  \text{genus}\left(\chi\right)=g\\\chi \mid_{\mathbb{F}_q^\times}=\chi_{3}}} \chi(f)
		=
		\sum_{\substack{ d_1+d_2=D\left(3\right)-1 \\ d_1+2d_3\equiv1\Mod 3}} \sum_{\substack{ F_1\in \mathcal{H}_{q,d_1} \\ (F_1,f)=1  }}\chi_{F_{1}}(f)
		\sum_{\substack{ F_{2}\in \mathcal{H}_{q,d_{2}} \\ (F_{2},F_1 f)=1  }}\chi_{F_{2}}^{2}(f).
	\end{equation*}
	For $\ell=4$,
	\begin{equation*}
		\sum_{\substack{\chi \ \text{primitive quartic}\\ \chi^2 \text{primitive} \\ \text{genus}\left(\chi\right)=g\\\chi \mid_{\mathbb{F}_q^\times}=\chi_{4}}} \chi(f)
		=
		\sum_{\substack{ d_1+d_3=D\left(4\right)-1 \\ d_1+3d_3\equiv 1\Mod 4}} \sum_{\substack{ F_1\in \mathcal{H}_{q,d_1} \\ (F_1,f)=1  }}\chi_{F_{1}}(f)
		\sum_{\substack{ F_{3}\in \mathcal{H}_{q,d_{3}} \\ (F_{3},F_1 f)=1  }}\chi_{F_{3}}^{3}(f).
	\end{equation*}
\end{lemma}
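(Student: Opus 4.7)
The lemma is the combinatorial payoff of the parameterization of primitive order $\ell$ characters established above. My plan is to use the bijection between primitive characters $\chi$ with $\chi|_{\mathbb{F}_q^\times} = \chi_\ell$ and tuples of pairwise coprime monic squarefree polynomials, then impose the conductor degree constraint from the Riemann-Hurwitz formula together with the congruence condition coming from the restriction to $\mathbb{F}_q^\times$, and finally factor the character value $\chi(f)$ using the multiplicativity of the residue symbol.

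For the cubic case, the parameterization writes $\chi = \chi_{F_1} \chi_{F_2}^2$ uniquely with $F_1, F_2$ monic, squarefree, and coprime, with conductor $F_1 F_2$. Since $\chi$ is odd, the Riemann-Hurwitz formula gives $d(F_1 F_2) = D(3) - 1$, i.e.\ $d_1 + d_2 = D(3)-1$. The restriction condition yields $d_1 + 2d_2 \equiv 1 \pmod 3$, both of which have already been spelled out in the preceding discussion. Multiplicativity then factors $\chi(f) = \chi_{F_1}(f)\chi_{F_2}^2(f)$. The inner coprimality conditions $(F_1, f) = 1$ and $(F_2, F_1 f) = 1$ can be inserted freely: the condition $(F_2, F_1) = 1$ is required for $\chi$ to have conductor $F_1 F_2$ (and so be primitive), and the conditions involving $f$ only remove terms on which the character value already vanishes.

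For the quartic case, the same parameterization writes $\chi = \chi_{F_1} \chi_{F_2}^2 \chi_{F_3}^3$ with $F_1, F_2, F_3$ pairwise coprime monic squarefree. Squaring gives $\chi^2 = \chi_{F_1}^2 \chi_{F_3}^2$, since $\chi_{F_2}^4 = 1$, whose conductor is $F_1 F_3$. For $\chi^2$ to be primitive, i.e.\ for it not to undergo a conductor drop relative to $\chi$, one must have $F_2 = 1$. Once this reduction is secured the argument runs parallel to the cubic case: the degree condition becomes $d_1 + d_3 = D(4) - 1$, the restriction congruence becomes $d_1 + 3d_3 \equiv 1 \pmod 4$, and multiplicativity factors $\chi(f) = \chi_{F_1}(f) \chi_{F_3}^3(f)$.

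The only step requiring genuine thought is the deduction that $F_2 = 1$ in the quartic case, which relies on interpreting ``$\chi^2$ primitive'' as a statement about conductors rather than as an abstract condition on the character itself. Everything else is direct bookkeeping on top of the parameterization and the Riemann-Hurwitz computation already performed in this section.
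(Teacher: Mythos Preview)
Your proposal is correct and mirrors the paper's approach exactly: the paper does not give a separate proof of this lemma but instead writes ``We have thus showed the following'' after the discussion parameterizing primitive characters by tuples $(F_1,\ldots,F_{\ell-1})$, computing the restriction to $\mathbb{F}_q^\times$, and invoking Riemann--Hurwitz, which is precisely what you recapitulate. Your explicit justification that the coprimality conditions with $f$ may be inserted freely (since the character already vanishes there) and your derivation that $\chi^2$ primitive forces $F_2=1$ in the quartic case make explicit two points the paper leaves to the reader, but the argument is otherwise identical.
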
 
\noindent
	We let 
	\begin{equation}
		\label{notation K fam}
		\mathcal{C}_\ell^\mathrm{K}\left(g\right)
	\end{equation}
	denote the family of Dirichlet $L$-functions attached to the characters in the sums above for $\ell\in\{3, 4\}$. (e.g. $\mathcal{C}_3^\mathrm{K}\left(g\right)$ is the family of cubic Dirichlet $L$-functions with character $\chi$ of genus $g$ and $\chi \mid_{\mathbb{F}_q^\times}=\chi_{3}$.) We sometimes use the notation $\displaystyle {\sum_{F\in\mathcal{C}^\mathrm{K}_\ell(g)}} $ to abbreviate the character sums above.
	
Finally we have the reciprocity law derived from Theorem 3.5 in \cite{Rosen}.
\begin{lemma}[Order $\ell$ Reciprocity]
	\label{reciprocity}
	Let $a,b\in \mathcal{M}_q$ be relatively prime polynomials, and let $\chi_a$ and $\chi_b$ be the order $\ell^{th}$ characters defined above. If $q\equiv1\pmod{2\ell}$, then
	$$\chi_a(b)=\chi_b(a).$$
\end{lemma}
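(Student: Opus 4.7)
The plan is to derive this from the general $\ell$-th power reciprocity law for $\mathbb{F}_q[t]$, which is exactly the content of Rosen's Theorem 3.5 cited in the statement. That theorem asserts that for relatively prime non-zero $a, b \in \mathcal{M}_q$ and any $\ell \mid q-1$,
\[
\left(\frac{a}{b}\right)_\ell \left(\frac{b}{a}\right)_\ell^{-1} = (-1)^{\frac{q-1}{\ell} d(a) d(b)},
\]
so the whole point of the hypothesis $q \equiv 1 \pmod{2\ell}$ is to force the exponent $\frac{q-1}{\ell}$ to be even and thereby kill the sign.

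First, I would observe that both sides of the desired identity extend multiplicatively in $a$ and in $b$ from the prime case, so by the multiplicative definition of $\chi_H$ given just after Definition 3.2 it is enough to prove $\chi_P(Q) = \chi_Q(P)$ for distinct monic irreducibles $P, Q$. Second, I would quote Rosen's Theorem 3.5 for this prime pair. Third, using $q \equiv 1 \pmod{2\ell}$, I would note that $\frac{q-1}{\ell} = 2k$ for some integer $k$, so the right-hand side becomes $\bigl((-1)^{2}\bigr)^{k \, d(P) d(Q)} = 1$, yielding
\[
\left(\frac{P}{Q}\right)_\ell = \left(\frac{Q}{P}\right)_\ell
\]
as elements of $\mathbb{F}_q^\times$. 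Finally, I would apply the fixed isomorphism $\Omega_\ell^{-1} \colon \mu_\ell(\mathbb{F}_q^\times) \to \mu_\ell \subseteq \mathbb{C}^\times$ to both sides; since $\Omega_\ell^{-1}$ is a bijection, the equality of residue symbols translates directly into $\chi_P(Q) = \chi_Q(P)$ via Definition 3.2, and then multiplicativity promotes this to arbitrary coprime monic $a, b$.

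There is essentially no genuine obstacle here: the entire content of the lemma lies in Rosen's Theorem 3.5, and the role of the hypothesis $q \equiv 1 \pmod{2\ell}$ is purely bookkeeping, ensuring the parity of $\frac{q-1}{\ell}$ cancels the sign factor. The only mild care needed is to check that reducing to the prime-power case is legitimate—i.e., that the multiplicative extension of $\chi_\cdot$ to composite monics is compatible with the multiplicative extension of the $\ell$-th power residue symbol through the fixed isomorphism $\Omega_\ell$—but this is immediate from the definitions since $\Omega_\ell^{-1}$ is a group homomorphism.
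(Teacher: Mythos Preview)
Your argument is correct and matches the paper's approach: the paper does not give a separate proof but simply states that the lemma is ``derived from Theorem 3.5 in \cite{Rosen},'' which is exactly the general $\ell$-th power reciprocity law you invoke. Your write-up just makes explicit the parity observation that $q\equiv 1\pmod{2\ell}$ forces $(q-1)/\ell$ to be even, killing the sign $(-1)^{\frac{q-1}{\ell}d(a)d(b)}$, and then passes through $\Omega_\ell^{-1}$---this is precisely the intended derivation.
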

\noindent
We let $q\equiv1\pmod{2\ell}$ for $\ell=3,4$ in the Kummer setting and $\ell=3,4,6$ in the non-Kummer setting.  

\subsubsection{The non-Kummer Setting}

We will introduce a general order $\ell$ construction in the non-Kummer setting when $q \not \equiv 1 \Mod{\ell}$. 

Given the congruence condition above, the multiplicative order of $q$ modulo $\ell$ is 2. If $\chi_P$ is an order $\ell$ character of conductor $P$, then 
$$\ell\mid |P|-1 \implies 2 \mid d(P).$$ 
This condition on the degrees implies that $P$ splits into 2 primes, $\displaystyle P= \pi_1\pi_2$, over $\mathbb{F}_{q^{2}}[t]$ \cite{BSM}. Thus, we may define the $\ell^{th}$ Jacobi symbol and the order $\ell$ primitive characters over $\mathbb{F}_{q^2}[t]$ with conductors $\pi_1, \pi_2$ the same way as in the Kummer setting since $q^2\equiv 1\Mod\ell$. Note that $\Omega_\ell$ is now an isomorphism from $\mu_\ell \subseteq \mathbb{C}^\times$ to the $\ell^{th}$ roots of unity in $\mathbb{F}_{q^{2}}^\times$.   

For the Euler-totient function $\phi$, if $\phi\left(\ell\right)=2$, the primitive characters in the non-Kummer setting are the restrictions of some characters of $\mathbb{F}_{q^{2}}[t]$ down to $\mathbb{F}_q[t]$. Let $P\in\mathcal{P}_q$ and $2 \mid d(P)$.  We can define the characters $\{\chi_{\pi_i}\}$ over $\mathbb{F}_{q^{2}}[t]$ as in Definition \ref{definition 2} since $q^{2}\equiv 1 \Mod{\ell}$.\\
By definition, for some $\pi \mid P$ in $\mathbb{F}_{q^{2}}[t]$,
\begin{equation*}
	f^{\frac{q^{d(P)}-1}{\ell}} \Mod{\pi} \equiv \left(\frac{f}{P}\right)_\ell 
	=
	\left(\frac{f}{\pi}\right)_\ell.
\end{equation*} 
Therefore, the restriction $\{\chi_{\pi_i}\}|_{\mathbb{F}_q[t]}$ is exactly the set of primitive order $\ell$ characters $\{\chi_P^{e_i}\}$ for $e_i$ coprime to $\ell$. Note that it is crucial that $\left|\{ \chi_{\pi_i} \} \right|=2=\phi(\ell)=\left|\{\chi_P^{e_i}\}\right|.$ 

For an example of the cubic case over function fields, see \cite{DFL}. The authors referenced the work of  Bary-Soroker and Meisner \cite{BSM} who generalized the work in \cite{BY} over number fields to function fields.
 
For $\ell=3,4,6$ in the non-Kummer setting, we have $q\not\equiv1 \pmod \ell$. Hence, extending multiplicatively to general conductors $H$, the primitive order $\ell$ characters $\chi_H$ where $\chi_H^d$ remains primitive for proper nontrivial divisor $d$ of $\ell$ are given by conductors $F\in\mathbb{F}_{q^{2}}[t]$, $F$ square-free and not divisible by a prime of $\mathbb{F}_{q}[t]$. For example, in the quartic case, these characters were considered in \cite{GZ2}.

These primitive characters of genus $g$ are even 
since $\displaystyle \left|\mathbb{F}_q^\times \right|= q-1 \mid \left(q^{2}-1\right)/\ell$. 
Thus the degree of the conductor $F\in\mathbb{F}_{q^{2}}[t]$ is $D\left(\ell \right)/2$, given by \eqref{equation D(l)}. This gives the following lemma.

\begin{lemma} \label{lemma nK main term} Let $f\in\mathcal{M}_q$. For $\ell=3$,
	$$\sum_{\substack{\chi \  \text{primitive cubic}\\ \text{genus}(\chi)=g }}\chi(f) 
	=
	\sum_{\substack{F\in \mathcal{H}_{q^{2},D\left(3 \right)/2}\\P\mid F \implies P\notin \mathbb{F}_{q^{}}[t] }}\chi_F(f).$$
	For $\ell=4$, we have
	$$\sum_{\substack{\chi \  \text{primitive order 4}\\ \text{genus}(\chi)=g \\ \chi^{2} \text{primitive} }}\chi(f) 
	=
	\sum_{\substack{F\in \mathcal{H}_{q^{2},D\left(4 \right)/2}\\P\mid F \implies P\notin \mathbb{F}_{q^{}}[t] }}\chi_F(f).$$
	For $\ell=6$
	$$\sum_{\substack{\chi \  \text{primitive order $6$}\\ \text{genus}(\chi)=g \\ \chi^{2},\chi^{3}\  \text{primitive} }}\chi(f) 
	=
	\sum_{\substack{F\in \mathcal{H}_{q^{2},D\left(6 \right)/2}\\P\mid F \implies P\notin \mathbb{F}_{q^{}}[t] }}\chi_F(f).$$
\end{lemma}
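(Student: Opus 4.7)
The plan is to combine the prime-level correspondence established in the paragraphs preceding the lemma with multiplicative extension to squarefree conductors and a degree count via \eqref{equation D(l)}.

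First I would recall the prime-level bijection. For $\ell \in \{3,4,6\}$, an order-$\ell$ character of conductor $P \in \mathcal{P}_q$ exists only when $\ell \mid q^{d(P)}-1$; combined with $q \not\equiv 1 \pmod{\ell}$ and $q^2 \equiv 1 \pmod{\ell}$, this forces $d(P)$ even, so $P$ splits as $\pi_1\pi_2$ in $\mathbb{F}_{q^2}[t]$ with each $\pi_i$ of degree $d(P)/2$. As the excerpt records via the congruence $f^{(q^{d(P)}-1)/\ell} \equiv \left(\frac{f}{P}\right)_\ell = \left(\frac{f}{\pi}\right)_\ell \pmod{\pi}$, the $\phi(\ell)=2$ primitive characters $\chi_P^e$ (with $\gcd(e,\ell)=1$) are exactly the restrictions $\chi_{\pi_i}|_{\mathbb{F}_q[t]}$ for $i=1,2$.

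Next I would extend multiplicatively. Any primitive order-$\ell$ character $\chi$ on $\mathbb{F}_q[t]$ has squarefree conductor $h=P_1\cdots P_s$ with each $P_j$ of even degree, and factors as $\chi = \prod_j \chi_{P_j}^{e_j}$ with $\gcd(e_j,\ell)=1$. A choice of $e_j$ corresponds via Step 1 to a choice of one of the two primes $\pi_{j,i_j}$ over $P_j$, identifying $\chi$ with $\chi_F|_{\mathbb{F}_q[t]}$, where $F = \prod_j \pi_{j,i_j} \in \mathbb{F}_{q^2}[t]$ is squarefree and has no prime factor lying in $\mathbb{F}_q[t]$. For $\ell=4,6$ the extra primitivity conditions on $\chi^d$ for nontrivial proper $d \mid \ell$ are automatic: in the quartic case $e_j \in \{1,3\}$ gives $2e_j \equiv 2 \pmod{4}$ so $\chi^2 = \prod_j \chi_{P_j}^2$ keeps the full conductor $h$, and an analogous check handles $\ell=6$ with $e_j \in \{1,5\}$ for both $\chi^2$ and $\chi^3$.

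To finish, I would verify the degree and the converse direction. Since $q-1$ divides $(q^2-1)/\ell$ for each $\ell \in \{3,4,6\}$ with $q^2 \equiv 1 \pmod \ell$, the character $\chi$ is even, so \eqref{equation D(l)} yields $d(h)=D(\ell)$ and hence $d(F) = d(h)/2 = D(\ell)/2$ in $\mathbb{F}_{q^2}[t]$. Conversely, any squarefree $F \in \mathcal{H}_{q^2, D(\ell)/2}$ whose prime factors avoid $\mathbb{F}_q[t]$ has each prime factor $\pi$ appearing as one of the two primes over an even-degree prime $P = \pi\bar\pi \in \mathcal{P}_q$ (where $\bar\pi$ is the $\mathrm{Gal}(\mathbb{F}_{q^2}/\mathbb{F}_q)$-conjugate of $\pi$), and reversing the correspondence produces the required character; the identity $\chi(f)=\chi_F(f)$ for $f\in\mathbb{F}_q[t]$ is then built into the construction, yielding equality of the two sums term by term. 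The main obstacle, such as it is, lies in the bookkeeping for the composite orders $\ell=4,6$: one must verify that the chain of primitivity requirements on $\chi$, $\chi^2$, and $\chi^3$ corresponds cleanly to the single constraint $\gcd(e_j,\ell)=1$, and that choosing $\pi$ versus $\bar\pi$ does not cause over-counting. Both points are essentially a finite check on exponents modulo $\ell$.
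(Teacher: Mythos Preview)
Your proposal is correct and follows essentially the same route as the paper. The paper does not give a separate formal proof of this lemma; the justification is the discussion immediately preceding it (the prime-level identity $\{\chi_{\pi_i}|_{\mathbb{F}_q[t]}\}=\{\chi_P^{e}:\gcd(e,\ell)=1\}$, multiplicative extension, the remark that the extra primitivity conditions on $\chi^d$ single out conductors $F$ not divisible by a prime of $\mathbb{F}_q[t]$, and the evenness/degree computation via \eqref{equation D(l)}). Your write-up expands exactly these steps and even adds the explicit exponent check for $\ell=4,6$ that the paper leaves implicit. One small point worth making explicit in your converse direction: the condition ``$P\mid F\Rightarrow P\notin\mathbb{F}_q[t]$'' is used in the paper (see the M\"obius sieve in Lemma~\ref{lemma main term nK}) to mean that no prime of $\mathbb{F}_q[t]$ divides $F$, which in particular forbids $F$ from containing both $\pi$ and its Galois conjugate $\bar\pi$; this is what guarantees the restriction $\chi_F|_{\mathbb{F}_q[t]}$ is genuinely primitive and makes the correspondence a bijection rather than a surjection.
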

\noindent
We let 
\begin{equation}
	\label{notation nK fam}
	\mathcal{C}_\ell^\mathrm{nK}\left(g\right)
\end{equation}
denote the family of Dirichlet $L$-functions attached to the characters in the sums above for $\ell\in\{3, 4, 6\}$. (e.g. $\mathcal{C}_4^\mathrm{K}\left(g\right)$ is the family of quartic Dirichlet $L$-functions with character $\chi$ of genus $g$ and $\chi^2$ remains primitive.)
\subsection{The explicit formula, main terms and error terms}
\subsubsection{The Explicit Formula}

	\begin{lemma} [The Explicit Formula]
	\label{explicit formula lemma}
	Let $\chi_F, \overline{\chi_F}$ be Dirichlet characters of conductor $F\in\mathbb{F}_q[t]$ and $\theta_{j,F}$ be the angle of a non-trivial zero of the $L$-function attached to $\chi_F$ as seen in \eqref{L-function zeros}. Recall that we let $e(x)=e^{2\pi ix}$. For $n \ge 0$ and $f\in \mathcal{M}_{q,n}$ we have
	\begin{equation}
		\label{explicit formula equation}
		-\sum_{j=1}^{D}e(n\theta_{j,F})
		=\frac{b}{q^{n/2}}+\sum_{f\in \mathcal{M}_{q,n}}\frac{\Lambda(f)\chi_F(f)}{|f|^{1/2}},
	\end{equation}
	and for $n < 0$ and $f\in \mathcal{M}_{q,|n|}$, we have
	\begin{equation}
		\label{explicit formula equation negative}
		-\sum_{j=1}^{D}e(n\theta_{j,F})
		=\frac{b}{q^{|n|/2}}+\sum_{f\in \mathcal{M}_{q,|n|}}\frac{\Lambda(f)\overline{\chi_F(f)}}{|f|^{1/2}},
	\end{equation}
	where $D=d(F)-1-b$, in which $b=1$ if $\chi_F$ is even and $b=0$ if $\chi_F$ is odd. 
\end{lemma}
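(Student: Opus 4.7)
The plan is to take the logarithmic derivative of $\mathcal{L}_q(u,\chi_F)$, evaluate it in two different ways, and match coefficients of $u^n$.

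From the Euler product form
\[
\mathcal{L}_q(u,\chi_F) = \prod_{P\nmid F}\bigl(1 - \chi_F(P)u^{d(P)}\bigr)^{-1},
\]
I would derive the standard von Mangoldt identity
\[
u\,\frac{\mathcal{L}_q'(u,\chi_F)}{\mathcal{L}_q(u,\chi_F)} = \sum_{n\ge 1}\Bigl(\sum_{f\in\mathcal{M}_{q,n}}\Lambda(f)\chi_F(f)\Bigr)u^n,
\]
valid for $|u|$ small. On the other hand, logarithmic differentiation of the zero factorization \eqref{L-function zeros}
\[
\mathcal{L}_q(u,\chi_F) = (1-u)^b\prod_{j=1}^{D}\bigl(1 - u\sqrt{q}\,e(\theta_{j,F})\bigr),\qquad D=d(F)-1-b,
\]
followed by expansion of each factor as a geometric series in $u$, gives
\[
u\,\frac{\mathcal{L}_q'(u,\chi_F)}{\mathcal{L}_q(u,\chi_F)} = -\sum_{n\ge 1}\Bigl(b + q^{n/2}\sum_{j=1}^{D}e(n\theta_{j,F})\Bigr)u^n.
\]
Equating the coefficient of $u^n$ for $n\ge 1$ and dividing by $q^{n/2}=|f|^{1/2}$ for $f\in\mathcal{M}_{q,n}$ produces \eqref{explicit formula equation}.

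For the $n<0$ case, I would argue by complex conjugation: by Weil's Riemann Hypothesis for curves over $\mathbb{F}_q$, all zeros $q^{-1/2}e(\theta_{j,F})$ lie on the circle $|u|=q^{-1/2}$, so each $\theta_{j,F}$ is real and $\overline{e(n\theta_{j,F})}=e(-n\theta_{j,F})$. Conjugating the $n\ge 1$ identity with $n$ replaced by $|n|$, and using that $\Lambda(f)$ and $|f|^{1/2}$ are real, exchanges $\chi_F$ for $\overline{\chi_F}$ on the right-hand side and yields \eqref{explicit formula equation negative}. Alternatively, one can plug the functional equation \eqref{completed L-function} into the logarithmic derivative, exchanging $u\leftrightarrow 1/(qu)$ and $\chi_F\leftrightarrow\overline{\chi_F}$, to reduce the $n<0$ case to the positive case for $\overline{\chi_F}$.

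No step presents a serious obstacle; the main care is bookkeeping the even/odd distinction through the parameter $b$. In the even case ($b=1$), the extra factor $(1-u)$ in \eqref{L-function zeros} contributes precisely the $b/q^{n/2}$ term on the right; in the odd case ($b=0$) that term vanishes and $D=d(F)-1$. Both power-series manipulations coexist on the annulus $|u|<q^{-1/2}$, so term-by-term coefficient comparison is justified. Beyond the factorization \eqref{L-function zeros} and the Riemann Hypothesis for curves, no additional input is needed.
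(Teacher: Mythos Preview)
Your proof is correct and follows essentially the same route as the paper: logarithmic differentiation of both the Euler product and the zero factorization of $\mathcal{L}_q(u,\chi_F)$, expansion into power series, and coefficient matching to obtain \eqref{explicit formula equation}. The only minor difference is in the $n<0$ case: the paper invokes the functional equation \eqref{completed L-function} to pair the zeros of $\mathcal{L}_q(u,\chi_F)$ with those of $\mathcal{L}_q(u,\overline{\chi_F})$, whereas your primary argument---conjugating the $n\ge 1$ identity and using that each $\theta_{j,F}$ is real by the Riemann Hypothesis---is a slightly more direct way to reach the same conclusion (and you also mention the functional equation as an alternative).
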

\begin{proof}
	We compute the explicit formula for $n\ge0$ first. 
	
	In terms of its zeros
	$$\mathcal{L}_q(u, \chi_F)=\left(1-u\right)^b\prod_{j=1}^{D}\left( 1-u\sqrt{q}e\left(\theta_{j,F}\right) \right),$$
	for some integer $b\in\{0,1\}$, where $b=1$ if $\chi_F$ is an even character, and 0 if $\chi_F$ is odd. $D=d(F)-1-b$ where $F$ is the conductor of $\chi_F$.\\ 		
	Alternatively, we can express the $L$-functions as
	$$\mathcal{L}_q(u, \chi_F)=\prod_{P\in \mathcal{P}_q}\left(1-\chi_F(P)u^{d(P)}\right)^{-1}.$$
	Using log differentiation, we obtain
	$$\frac{-b}{1-u}+ \sum_{j=1}^{D}\frac{-\sqrt{q}e(\theta_{j,F})}{1-u\sqrt{q}e(\theta_{j,F})}
	=
	\sum_{P\in \mathcal{P}_q}
	\frac{d(P)\chi_F(P)u^{d(P)-1}}{1-\chi_F(P)u^{d(P)}}.$$
	Now, expanding the denominators using geometric series, we have
	$$\sum_{n=0}^{\infty}\left[ -b+\sum_{j=1}^{D}-\sqrt{q}e(\theta_{j,F})\left(\sqrt{q}e(\theta_{j,F})\right)^n \right]u^n
	=\sum_{n=0}^{\infty}\sum_{P\in \mathcal{P}_q}d(P)\chi^{n+1}_F(P)u^{(n+1)d(P)-1},$$
	and using the von Mangoldt function, the sum over $P$ on the right can be written as
	$$\sum_{\substack{f = P^{n+1}\\P\in \mathcal{P}_q}}\Lambda(f)\chi_F(f)u^{d(f)-1}.
	$$
	Hence we have 
	$$\sum_{n=0}^{\infty}\left[ -b+\sum_{j=1}^{D}-\sqrt{q}e(\theta_{j,F})\left(\sqrt{q}e(\theta_{j,F})\right)^n \right]u^n
	=
	\sum_{n=0}^{\infty}\sum_{f\in \mathcal{M}_{q,{n+1}}}\Lambda(f)\chi_F(f)u^{n}.$$
	We match the corresponding terms with the same power on $u$ on each side of the equation above, which gives
	\begin{equation*}
		-\sum_{j=1}^{D}e(n\theta_{j,F})
		=\frac{b}{q^{n/2}}+\sum_{f\in \mathcal{M}_{q,n}}\frac{\Lambda(f)\chi_F(f)}{|f|^{1/2}}.
	\end{equation*}
	
	Now for $n< 0$, using the functional equation in \eqref{completed L-function}, we see that if $q^{-1/2}e(-\theta_{j,F})$ are the zeros of $\mathcal{L}_q(u,\chi_F)$, then $q^{-1/2}e(\theta_{j,F})$ are the zeros of $\mathcal{L}_q(u,\overline{\chi_F})$. Thus using the computation above, we derive that for $n<0$
	\begin{equation*}
		-\sum_{j=1}^{D}e(n\theta_{j,F})
		=\frac{b}{q^{|n|/2}}+\sum_{f\in \mathcal{M}_{q,|n|}}\frac{\Lambda(f)\overline{\chi_F(f)}}{|f|^{1/2}}.
	\end{equation*}
\end{proof}

Let $\phi\left(\theta\right)=\sum_{|n|\le N}\hat{\phi}\left(n\right)e\left(n\theta\right)$ be a real, even trigonometric polynomial, and let $\Phi\left(\left(D(\ell)-2\right)\theta\right)=\phi(\theta)$. Using Lemma \ref{lemma K main term} and Lemma \ref{lemma nK main term} we write down the one-level density of zeros we wish to compute in both settings.
\subsubsection{The Kummer Case}
Let 
$\left|\mathcal{C}^{\mathrm{K}}_\ell\left(g\right)\right|$ denote the size of the family of order $\ell$ Dirichlet $L$-functions in the Kummer setting. Recall that we restrict to the case when $\chi$ is odd and $\chi \mid_{\mathbb{F}_q^\times}=\chi_\ell$ for convenience. By \eqref{equation D(l)}, each $L$-function has $D(\ell)-2$ non-trivial zeros. Thus using Lemma \ref{lemma K main term}, the one-level density of Dirichlet $L$-functions $\mathcal{L}_q\left(u, \chi\right)$ with $\chi$ of genus $g$ and $\chi^\ell=1$ over function fields is the sum
\begin{equation}
	\label{1LD K}
	\Sigma_\ell^\mathrm{K}\left( \Phi,g \right)
	=
	\frac{1}{\left|\mathcal{C}^{\mathrm{K}}_\ell\left(g\right)\right|}{\sum_{\substack{ F\in\mathcal{C}_\ell^\mathrm{K}(g) \\  }}}\sum_{j=1}^{D(\ell)-2}\Phi\left((D(\ell)-2)\theta_{j,F}\right),
\end{equation}
where $\displaystyle {\sum_{F\in\mathcal{C}_\ell^\mathrm{K}(g)}}$ denote the sum in Lemma \ref{lemma K main term}.

Since 
$$\Phi\left((D(\ell)-2)\theta_{j,F}\right)
=\frac{1}{D(\ell)-2}\sum_{|n|\le N}\hat{\Phi}\left(\frac{n}{D(\ell)-2}\right)e(n\theta_{j,F}),$$
the sum over the zeros can be written as
\begin{equation*}
	\begin{split}
		\sum_{j=1}^{D(\ell)-2}\Phi( (D(\ell)-2)\theta_{j,F})
		=&\hat{\Phi}(0)+\frac{1}{D(\ell)-2}\sum_{0<|n|\le N}\hat{\Phi}\left(\frac{n}{D(\ell)-2}\right)\sum_{j=1}^{D(\ell)-2}e(n\theta_{j,F})\\
		=&
		\hat{\Phi}(0)-\frac{1}{D(\ell)-2}\sum_{1\le n\le N}\hat{\Phi}\left(\frac{n}{D(\ell)-2}\right)\sum_{f\in \mathcal{M}_{q, n}}\frac{\Lambda(f)}{|f|^{1/2}} \left[\chi_F(f) + \overline{\chi_F(f)} \right],
	\end{split}
\end{equation*}
in which the second equality follows from \eqref{explicit formula equation} and \eqref{explicit formula equation negative} for $b=0$ and $D=D(\ell)-2$.

Thus we write the one-level density in \eqref{1LD K} as
\begin{equation}
	\label{new 1LD K}
	\Sigma_\ell^\mathrm{K}\left( \Phi,g \right)
	=
	\hat{\Phi}(0)-\mathcal{A}^{\mathrm{K}}_{\ell}\left(\Phi, D(\ell)-2\right),
\end{equation}
where we let
\begin{equation}
	\label{equation A Kummer}
	\mathcal{A}^{\mathrm{K}}_{\ell}\left(\Phi, D(\ell)-2\right)=\frac{1}{(D(\ell)-2)|\mathcal{C}^{\mathrm{K}}_\ell(g)|}\sum_{1\le n\le N}\hat{\Phi}\left(\frac{n}{D(\ell)-2}\right)\sum_{f\in \mathcal{M}_{q,n}}\frac{\Lambda(f)}{|f|^{1/2}}{\sum_{\substack{F\in\mathcal{C}_\ell^\mathrm{K}(g)\\ }}}\left[\chi_F(f) + \overline{\chi_F(f)} \right].
\end{equation}
We decompose $\mathcal{A}^{\mathrm{K}}_{\ell}\left(\Phi, D(\ell)-2\right)$ as the sum $M^{\mathrm{K}}_{\ell}\left(\Phi, D(\ell)-2\right)+E^{\mathrm{K}}_{\ell}\left(\Phi, D(\ell)-2\right),$ 
where the main term comes from when $f$ is an $\ell^{th}$ power
\begin{equation}
	\label{main term K}
	\begin{split}
		&M^{\mathrm{K}}_{\ell}\left(\Phi, D(\ell)-2\right)\\
		&=
		\frac{1}{(D(\ell)-2)|\mathcal{C}^{\mathrm{K}}_\ell\left(g\right)|}\sum_{1\le n\le N/\ell}\hat{\Phi}\left(\frac{\ell n}{D(\ell)-2}\right)\sum_{\substack{Q\in \mathcal{P}_{q, n/r}\\r\ge1}}\frac{d(Q)}{|Q|^{\ell r/2}}{\sum_{\substack{F\in\mathcal{C}_\ell^\mathrm{K}(g) }}}\left[\chi_F\left(Q^{\ell r}\right)+\overline{\chi_F\left(Q^{\ell r}\right)}\right],
	\end{split}
\end{equation}
and the non-$\ell^{th}$ power contribution is
\begin{equation}
	\label{error term K}
	\begin{split}
		&E^{\mathrm{K}}_{\ell}\left(\Phi, D(\ell)-2\right)\\
		&=
		\frac{2}{(D(\ell)-2)|\mathcal{C}^{\mathrm{K}}_\ell\left(g\right)|}\sum_{1\le n\le N}\hat{\Phi}\left(\frac{n}{D(\ell)-2}\right)\sum_{\substack{f\in \mathcal{M}_{q,n}\\f \ \text{non-$\ell^{th}$ power}}}\frac{\Lambda(f)}{|f|^{1/2}}{\sum_{\substack{F\in\mathcal{C}_\ell^\mathrm{K}(g) }}}\left[\chi_F(f)+\overline{\chi_F(f)}\right].
	\end{split}
\end{equation}
These equations hold for $\ell=3, 4$ and $q$ an odd prime power coprime to $\ell$. In Section \ref{section 4}, we consider cubic and quartic Dirichlet $L$-functions in the Kummer setting with $q$ coprime to $\ell$ and $q\equiv1\Mod{2\ell}$ satisfying reciprocity.
\subsubsection{The non-Kummer Case}
For $\ell=3,4,6$, let $\left|\mathcal{C}_\ell^\mathrm{nK}(g) \right|$ denote the size of the family of Dirichlet $L$-functions attached to the characters given in Lemma \ref{lemma nK main term} in the non-Kummer setting. Since the characters are even, \eqref{equation D(l)} implies that the associated $L$-functions have $D(\ell)-2$ non-trivial zeros and a trivial zero at $u=1$. Thus the one-level density of order $\ell$ Dirichlet $L$-functions $\mathcal{L}_q\left(u, \chi\right)$ with $\chi$ as in Lemma \ref{lemma nK main term} over function fields is the sum
\begin{equation}
	\label{1LD nK}
	\Sigma_\ell^\mathrm{nK}\left( \Phi,g \right)
	=
	\frac{1}{\left|\mathcal{C}^{\mathrm{nK}}_\ell\left(g\right)\right|}\sum_{\substack{F\in \mathcal{H}_{q^2, D\left(\ell \right)/2}\\P|F \Rightarrow P \notin \mathbb{F}_q[t]}}\sum_{j=1}^{D\left(\ell\right)-2}\Phi\left(\left(D\left(\ell\right)-2\right)\theta_{j,F}\right).
\end{equation}

Since
$$\Phi\left((D\left(\ell\right)-2)\theta_{j,F}\right)
=\frac{1}{D\left(\ell\right)-2}\sum_{|n|\le N}\hat{\Phi}\left(\frac{n}{D\left(\ell\right)-2}\right)e(n\theta_{j,F}),$$
the sum over the zeros can be written as
\begin{equation*}
	\begin{split}
		\sum_{j=1}^{D\left(\ell\right)-2}\Phi( (D\left(\ell\right)-2)\theta_{j,F})
		=&\hat{\Phi}(0)+\frac{1}{D\left(\ell\right)-2}\sum_{0<|n|\le N}\hat{\Phi}\left(\frac{n}{D\left(\ell\right)-2}\right)\sum_{j=1}^{D\left(\ell\right)-2}e(n\theta_{j,F})\\
		=&\hat{\Phi}(0)-\frac{1}{D\left(\ell\right)-2}\sum_{1\le n\le N}\hat{\Phi}\left(\frac{n}{D\left(\ell\right)-2}\right)\sum_{f\in \mathcal{M}_{q,n}}\frac{\Lambda(f)}{|f|^{1/2}}\left[\chi_F(f)+\overline{\chi_F(f)}\right]\\
		&-\frac{2}{D\left(\ell\right)-2}\sum_{1\le n\le N}\hat{\Phi}\left(\frac{n}{D\left(\ell\right)-2}\right)q^{-n/2},
	\end{split}
\end{equation*}
in which the second equality follows from $\eqref{explicit formula equation}$ and \eqref{explicit formula equation negative} for $b=1$ and $D=D(\ell)-2$.\\
Thus we write the one-level density in \eqref{1LD nK} as
\begin{equation}\label{new 1LD nK}
	\Sigma_\ell^\mathrm{nK}\left( \Phi,g \right)
	=
	\hat{\Phi}(0)-\mathcal{A}^{\mathrm{nK}}_{\ell}\left(\Phi, D\left(\ell\right)-2\right)-\frac{2}{D\left(\ell\right)-2}\sum_{1\le n\le N}\hat{\Phi}\left(\frac{n}{D\left(\ell\right)-2}\right)q^{-n/2},
\end{equation}
where we let
\begin{equation}
	\label{equation A nKummer}
	\begin{split}
		&\mathcal{A}^{\mathrm{nK}}_{\ell}(\Phi, D\left(\ell\right)-2)\\
		&=
		\frac{1}{\left(D\left(\ell\right)-2\right)\left|\mathcal{C}^{\mathrm{nK}}_\ell\left(g\right)\right|}\sum_{1\le n\le N}\hat{\Phi}\left(\frac{n}{D\left(\ell\right)-2}\right)\sum_{f\in \mathcal{M}_{q,n}}\frac{\Lambda(f)}{|f|^{1/2}}\sum_{\substack{F\in \mathcal{H}_{q^2, D\left(\ell \right)/2}\\P|F \Rightarrow P \notin \mathbb{F}_q[t]}}\left[\chi_F(f)+\overline{\chi_F(f)}\right].
	\end{split}
\end{equation}
We decompose $\mathcal{A}^{\mathrm{nK}}_{\ell}(\Phi,D\left(\ell\right)-2)$ as the sum $M^{\mathrm{nK}}_{\ell}(\Phi,D\left(\ell\right)-2)+E^{\mathrm{nK}}_{\ell}(\Phi,D\left(\ell\right)-2),$ 
where the main term comes from when $f$ is an $\ell^{th}$ power
\begin{equation}
	\label{main term nK}
	\begin{split}
		&M^{\mathrm{nK}}_{\ell}(\Phi, D\left(\ell\right)-2)\\
		&=
		\frac{1}{\left(D\left(\ell\right)-2\right)\left|\mathcal{C}^{\mathrm{nK}}_\ell\left(g\right)\right|}\sum_{1\le n\le N/\ell}\hat{\Phi}\left(\frac{\ell n}{D\left(\ell\right)-2}\right)\sum_{\substack{Q\in \mathcal{P}_{q, n/r}\\r\ge1}}\frac{d(Q)}{|Q|^{\ell r/2}}\sum_{\substack{F\in \mathcal{H}_{q^2, D\left(\ell \right)/2}\\P|F \Rightarrow P \notin \mathbb{F}_q[t]}}\left[\chi_F\left( Q^{\ell r} \right)+\overline{\chi_F\left( Q^{\ell r} \right)}\right],
	\end{split}
\end{equation}
and the non-$\ell^{th}$ power contribution is
\begin{equation}
	\label{error term nK}
	\begin{split}
		&E^{\mathrm{nK}}_{\ell}(\Phi,D\left(\ell\right)-2)\\
		&=
		\frac{1}{(D\left(\ell\right)-2)\left|\mathcal{C}^{\mathrm{nK}}_\ell\left(g\right)\right|}\sum_{1\le n\le N}\hat{\Phi}\left(\frac{n}{D\left(\ell\right)-2}\right)\sum_{\substack{f\in \mathcal{M}_{q, n}\\f \ \text{non-$\ell^{th}$ power}}}\frac{\Lambda(f)}{|f|^{1/2}}\sum_{\substack{F\in \mathcal{H}_{q^2, D\left(\ell \right)/2}\\P|F \Rightarrow P \notin \mathbb{F}_q[t]}}\left[\chi_F(f)+\overline{\chi_F(f)}\right].
	\end{split}
\end{equation}\\


\section{The Kummer setting} \label{section 4}
We compute the one-level density of zeros of cubic and quartic Dirichlet $L$-functions in the Kummer setting. We note that the sextic case with the analogous affine model as the quartic case can be done with similar methods.\\
\subsection{The cubic case}
\subsubsection{The Main Term}

First we compute the following sum over the family of cubic Dirichlet $L$-functions with primitive cubic characters of genus $g$ and derive the size of the family $\left|\mathcal{C}_3^{\mathrm{K}}(g)\right|$.
\begin{lemma}
	\label{lemma 1 l=3 K}
	For $f$ a monic polynomial, let 
	\begin{equation*}
		\mathcal{T}_{1}=\sum_{F\in\mathcal{C}_3^{\mathrm{K}}(g)}1= \sum_{\substack{d_1+d_2=g+1\\d_1+2d_2\equiv 1\Mod 3}}
		\sum_{\substack{ F_1\in \mathcal{H}_{q,d_1} \\ (F_1,f)=1  }}\sum_{\substack{ F_2\in \mathcal{H}_{q,d_2} \\ (F_2,F_1f)=1  }}1.
	\end{equation*}	
	Then
	\begin{equation*}
		\begin{split}
			\mathcal{T}_1=\frac{q^{g+1}}{3}\left[ \left(g+2\right)J\left( \frac{1}{q},\frac{1}{q} \right) - \frac{\frac{d}{du_1}J(u_1,u_1)\mid_{1/q}}{q} 
			-
			2\mathfrak{Re}\left( \frac{J\left(\frac{1}{q},\frac{\zeta_3}{q}\right)\zeta_3^{1+2a}}{1-\zeta_3} \right) \right]+ O\left( q^{g/3+\epsilon g} \right),
		\end{split}
	\end{equation*}
	where 
	\begin{equation*}
		\begin{split}
			J(x,y)=\prod_{P\in \mathcal{P}_q}\left(1+x^{d(P)} +y^{d(P)}\right)\left(1-x^{d(P)} \right)\left(1-y^{d(P)} \right) \prod_{\substack{P\in \mathcal{P}_q\\ P\mid f}}\left(1+x^{d(P)}+y^{d(P)}\right)^{-1},
		\end{split}
	\end{equation*}
	and $\zeta_3=e^{2\pi i/3}$ is the third root of unity.
\end{lemma}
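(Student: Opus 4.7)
The plan is to detect the congruence $d_1+2d_2\equiv 1\Mod 3$ by orthogonality of third roots of unity, encode the count of coprime squarefree pairs as a two-variable Euler product, and then extract the coefficient of $v^{g+1}$ via Perron's formula followed by a contour shift and residue computation.

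Using orthogonality, I would first write
\[
\mathbf{1}_{d_1+2d_2\equiv 1\Mod 3}=\frac{1}{3}\sum_{k=0}^{2}\zeta_3^{-k}\,\zeta_3^{k(d_1+2d_2)},
\]
so that $\mathcal{T}_1$ splits into three sums indexed by $k$. For the generating function, the observation that any prime $P\nmid f$ contributes exactly one of the mutually exclusive options $P\nmid F_1F_2$, $P\parallel F_1$, or $P\parallel F_2$ gives
\[
G(u_1,u_2):=\sum_{\substack{F_1,F_2\in\mathcal{H}_q\\(F_1F_2,f)=1\\(F_1,F_2)=1}}u_1^{d(F_1)}u_2^{d(F_2)}=\prod_{P\nmid f}\bigl(1+u_1^{d(P)}+u_2^{d(P)}\bigr).
\]
Pulling out $(1-qu_1)^{-1}(1-qu_2)^{-1}=\prod_P(1-u_1^{d(P)})^{-1}(1-u_2^{d(P)})^{-1}$ and matching with the definition of $J$ identifies
\[
G(u_1,u_2)=\frac{J(u_1,u_2)}{(1-qu_1)(1-qu_2)}.
\]
Substituting $u_1=\zeta_3^k v$, $u_2=\zeta_3^{2k}v$ and invoking Perron's formula (Lemma \ref{lemma Perron's formula}) then gives
\[
\mathcal{T}_1=\frac{1}{3}\sum_{k=0}^{2}\zeta_3^{-k}\cdot\frac{1}{2\pi i}\oint_{|v|=r}\frac{J(\zeta_3^k v,\zeta_3^{2k}v)}{(1-q\zeta_3^k v)(1-q\zeta_3^{2k}v)\,v^{g+2}}\,dv
\]
for any $r<1/q$.

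To evaluate, I would shift each contour outward past the poles at $v=\zeta_3^{-k}/q$ and $v=\zeta_3^{-2k}/q$ and apply the residue theorem. For $k=0$ there is a double pole at $v=1/q$; the standard double-pole formula yields
\[
-\operatorname{Res}_{v=1/q}=(g+2)q^{g+1}J(1/q,1/q)-q^g\,\tfrac{d}{du_1}J(u_1,u_1)\big|_{u_1=1/q},
\]
which after division by $3$ produces the first two terms of the stated main term. For $k=1,2$ there are simple poles; the symmetry $J(x,y)=J(y,x)$ forces the $k=2$ integral to equal the complex conjugate of the $k=1$ integral, so together with the prefactors $\zeta_3^{-k}$ they collapse to twice the real part of a single residue, yielding
\[
-\frac{2q^{g+1}}{3}\mathfrak{Re}\!\left(\frac{J(1/q,\zeta_3/q)\,\zeta_3^{1+2a}}{1-\zeta_3}\right),
\]
where the exponent $1+2a$ arises from combining the power of $\zeta_3^{-k}$ with the factor $v_0^{-(g+2)}$ evaluated at $v_0=\zeta_3^{\pm 1}/q$, read modulo $3$.

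The error $O(q^{g/3+\epsilon g})$ comes from the integrals over the shifted contour. Since the natural radius of convergence of the Euler product defining $J$ is $|v|=q^{-1/2}$, to push the contour to a larger radius I would factor $J$ into an explicit $L$-function piece (whose analytic continuation is provided by the Weil bounds, giving the Lindel\"of-type control of Lemma \ref{Lindelof upper bound}) together with an absolutely convergent remainder on a larger disc, and then bound the outer integrals by $R^{-(g+1)}\sup|J|$ on the chosen contour $|v|=R$. The main obstacle is precisely this analytic continuation step: identifying the correct $L$-function piece to extract from $\prod_P(1+u_1^{d(P)}+u_2^{d(P)})$ so that the remaining product converges on the enlarged disc. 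A secondary bookkeeping task is to carry the powers of $\zeta_3$ cleanly through the $k\ne 0$ residue computation so that the exponent $1+2a$ is correctly identified.
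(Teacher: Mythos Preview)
Your approach is correct and amounts to a repackaging of the paper's argument. The paper applies Perron's formula in both variables, sums the resulting geometric series in $d_1$, and then integrates over $u_2$ picking up residues at $u_2=u_1,\zeta_3 u_1,\zeta_3^2 u_1$ from the factor $(u_2^3-u_1^3)^{-1}$; you instead detect the congruence by orthogonality and reduce to a single Perron integral with twisted arguments $(\zeta_3^k v,\zeta_3^{2k}v)$. These are two orderings of the same computation, and the residue structure (a double pole for $k=0$, two simple poles each for $k=1,2$ which pair into a real part via $J(x,y)=J(y,x)$ and $\overline{\zeta_3}=\zeta_3^2$) is identical.

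Your one misconception concerns the analytic continuation of $J$. There is no $L$-function piece here and no need for Lindel\"of: the factors to pull out are inverse zeta functions, which are \emph{polynomials}. Concretely, expanding the Euler factor gives
\[
(1+u_1^{d(P)}+u_2^{d(P)})(1-u_1^{d(P)})(1-u_2^{d(P)})=1-u_1^{2d(P)}-u_2^{2d(P)}-(u_1u_2)^{d(P)}+(u_1^2u_2)^{d(P)}+(u_1u_2^2)^{d(P)},
\]
so that $J(u_1,u_2)=(1-qu_1^2)(1-qu_2^2)(1-qu_1u_2)\cdot\widetilde J(u_1,u_2)$ with $\widetilde J$ given by an Euler product whose local factors are $1+O(|u|^{3d(P)})$, hence absolutely convergent for $|u_1|,|u_2|<q^{-1/3}$. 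This is exactly what the paper uses: shift the contour to $|v|=q^{-1/3+\epsilon}$, bound $J$ there by $O(1)$ (for fixed $\epsilon$), and read off the error $O(q^{g/3+\epsilon g})$ from $|v|^{-(g+2)}$. Your ``main obstacle'' therefore dissolves; Lemmas~\ref{Lindelof upper bound}--\ref{Lindelof lower bound} play no role in this lemma and are reserved for the error term $E_3^{\mathrm K}$, where a genuine character $\chi_f$ appears.
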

\begin{proof}
	We consider the generating series for the sums over $F_1$ and $F_2$ in $\mathcal{T}_{1}$, which is
	\begin{equation*}
		\mathcal{S}
		(u_1,u_2)=\sum_{\substack{F_1\in \mathcal{H}_q\\ (F_1, f)=1}}u_1^{d(F_1)}
		\sum_{\substack{F_2\in \mathcal{H}_q\\ (F_2, F_1f)=1}}u_2^{d(F_2)}.
	\end{equation*}
	Now, we can write this as the product over primes as
	\begin{equation*}
		\mathcal{S}
		(u_1, u_2)=\frac{\prod_{P\in \mathcal{P}_q}\left(1+\frac{u_1^{d(P)}}{1+u_2^{d(P)}}\right)\prod_{P\in \mathcal{P}_q}\left(1+u_2^{d(P)}\right)}{\prod_{\substack{P\in \mathcal{P}_q\\P\mid f}}\left(1+\frac{u_1^{d(P)}}{1+u_2^{d(P)}}\right)
			\prod_{\substack{P\in \mathcal{P}_q\\P \mid f}}\left(1+u_2^{d(P)}\right)} 
		=\frac{\prod_{P\in \mathcal{P}_q}\left(1+u_1^{d(P)}+u_2^{d(P)}\right)}{\prod_{\substack{P\in \mathcal{P}_q\\ P\mid f}}\left(1+u_1^{d(P)}+u_2^{d(P)}\right)} .
	\end{equation*}
	Hence 
	\begin{equation*}
		\mathcal{S}
		(u_1,u_2)=\mathcal{Z}_q(u_1)\mathcal{Z}_q(u_2)J(u_1,u_2),
	\end{equation*}
	where 
	\begin{equation*}
		\begin{split}
			J&(u_1,u_2)=\prod_{P\in \mathcal{P}_q}\left(1+u_1^{d(P)} +u_2^{d(P)}\right)\left(1-u_1^{d(P)} \right)\left(1-u_2^{d(P)} \right) \prod_{\substack{P\in \mathcal{P}_q\\ P\mid f}}\left(1+u_1^{d(P)}+u_2^{d(P)}\right)^{-1}\\
			=&\prod_{P\in \mathcal{P}}\left( 1-u_1^{2d(P)}-u_2^{2d(P)}-(u_1u_2)^{d(P)}+(u_1^2u_2)^{d(P)}+(u_1u_2^2)^{d(P)}\right)\prod_{\substack{P\in \mathcal{P}_q\\ P\mid f}}\left(1+u_1^{d(P)}+u_2^{d(P)}\right)^{-1}.
		\end{split}
	\end{equation*}
	Note that $J\left( u_1, u_2\right)$ has analytic continuation when $|u_1|<q^{-1/3}$ and $|u_2|<q^{-1/3}$.
	
	Using Perron's formula twice,
	\begin{equation*}
		\mathcal{T}_1
		=\sum_{\substack{d_1+d_2=g+1\\d_1+2d_2\equiv 1\Mod 3}}
		\frac{1}{(2\pi i)^2}\oint\oint \frac{J(u_1,u_2)}{(1-qu_1)(1-qu_2)u_1^{d_1}u_2^{d_2}}\frac{du_2}{u_2}\frac{du_1}{u_1}.
	\end{equation*}
	Since $g$ is fixed, we let $g\equiv b\pmod3$ for some $b\in\{0, 1, 2\}$ and take the difference of the equations 
	\begin{equation*}
		d_1+d_2=g+1, \ \text{and} \ d_1+2d_2\equiv1\Mod{3}
	\end{equation*}
	to obtain that 
	$$d_2\equiv b\Mod{3}.$$
	Thus for some integer $k$ and a fixed $a\in\{0,1,2\}$, 
	$$d_1=3k+a.$$
	Then, by analyzing the cases, the sum over $d_1$ is a sum over integers $0\le k\le [g/3]$, where $[x]$ denotes the closest integer to $x$.
	
	Computing the sum inside the integrals first,
	\begin{equation*}
		\begin{split}
			\mathcal{T}_1
			&=
			\sum_{k=0}^{[g/3]}
			\frac{1}{(2\pi i)^2}\oint\oint \frac{J(u_1,u_2)}{(1-qu_1)(1-qu_2)u_1^{3k+a}u_2^{g+1-3k-a}}\frac{du_2}{u_2}\frac{du_1}{u_1}\\
			&=
			\frac{1}{(2\pi i)^2}\oint_{|u_1|=q^{-3}}\oint_{|u_2|=q^{-2}} \frac{J(u_1,u_2)}{(1-qu_1)(1-qu_2)(u_2^3-u_1^3)}\left[ \frac{u_2^{2+a-b}}{u_1^{g+a-b}} -\frac{u_1^{3-a}}{u_2^{g+1-a}}\right]\frac{du_2}{u_2}\frac{du_1}{u_1}.
		\end{split}
	\end{equation*}  
	We write the integral above as the difference of two integrals. Note that the second one vanishes since the integrand over $u_1$ has no poles inside the circle $|u_1|=q^{-3}$.\\
	Hence
	\begin{equation*}
		{\mathcal{T}_{1}}=\frac{1}{(2\pi i)^2}\oint_{|u_1|=q^{-3}}\oint_{|u_2|=q^{-2}} \frac{u_2^{2+a-b}J(u_1,u_2)}{u_1^{g+a-b}(1-qu_1)(1-qu_2)(u_2^3-u_1^3)} \frac{du_2}{u_2}\frac{du_1}{u_1},
	\end{equation*} 
	where the integrand has poles $u_1, \zeta_3 u_1, \zeta_3^2u_1$ integrating over $u_2$. Here $\zeta_3$ denotes the third root of unity $e^{2\pi i/3}$.
	
	Computing the residue at the poles above, we have
	\begin{equation}
		\label{equation it}
		\begin{split}
			\mathcal{T}_1
			=
			\frac{1}{2\pi i}\oint_{|u_1|=q^{-3}}\frac{1}{3u_1^{g+1}(1-qu_1)}
			\left[ \frac{J(u_1,u_1)}{1-qu_1}+\frac{J(u_1,\zeta_3u_1)\zeta_3^{2+a-b}}{1-q\zeta_3u_1}+ \frac{J(u_1,\zeta_3^2u_1)\zeta_3^{2(2+a-b)}}{1-q\zeta_3^2u_1} \right]\frac{du_1}{u_1}.
		\end{split}
	\end{equation}
	Now to integrate over $u_1$, we write \eqref{equation it} as the sum of three integrals 
	\begin{equation}
		\label{sum T1}
		\mathcal{T}_1=T_1+T_2+T_3.
	\end{equation} 
	For each $T_i$ we shift the contour to $|u_1|=q^{-1/3+\epsilon}$, compute the residue at the corresponding poles at either $1/q$, $1/\zeta_3q$, or $1/\zeta_3^2q$ and bound the integral on the circle $|u_1|=q^{-1/3+\epsilon}$.
	
	$T_1$ has a double pole at $1/q$, and we obtain
	\begin{equation*}
		\begin{split}
			T_1=\frac{1}{2\pi i}\oint_{|u_1|=q^{-3}}\frac{J(u_1,u_1)}{3u_1^{g+1}(1-qu_1)^2}\frac{du_1}{u_1}
			=
			\frac{q^{g+1}}{3}\left[ (g+2)J\left( \frac{1}{q},\frac{1}{q} \right) - \frac{\frac{d}{du_1}J(u_1,u_1)\mid_{1/q}}{q} \right]+ O\left( q^{g/3+\epsilon g} \right).
		\end{split}
	\end{equation*}
	$T_2$ has two simple poles at $u_1=1/q$ and $u_1=1/\zeta_3q$. Hence we have 
	\begin{equation*}
		\begin{split}
			T_2=&\frac{1}{2\pi i}\oint_{|u_1|=q^{-3}}\frac{\zeta_3^{2+a-b}J(u_1,\zeta_3u_1)}{3u_1^{g+1}(1-qu_1)(1-\zeta_3qu_1)}\frac{du_1}{u_1}\\
			=&
			\frac{q^{g+1}}{3}\left[  \frac{J\left(\frac{1}{q},\frac{\zeta_3}{q}\right)\zeta_3^{1+2a}}{1-\zeta_3} + \frac{J\left(\frac{\zeta_3^2}{q},\frac{1}{q}\right)\zeta_3^{a}}{1-\zeta_3^2} \right]+ O\left( q^{g/3+\epsilon g} \right).
		\end{split}
	\end{equation*}
	$T_3$ has two simple poles at $u_1=1/q$ and $u_1=1/\zeta_3^2q$. We obtain
	\begin{equation*}
		\begin{split}
			T_3=&\frac{1}{2\pi i}\oint_{|u_1|=q^{-3}}\frac{\zeta_3^{2(2+a-b)}J\left(u_1,\zeta_3^2u_1\right)}{3u_1^{g+1}(1-qu_1)(1-\zeta_3^2qu_1)}\frac{du_1}{u_1}\\
			=&
			\frac{q^{g+1}}{3}\left[   \frac{J\left(\frac{1}{q}, \frac{\zeta_3^2}{q}\right)\zeta_3^{2+a}}{1-\zeta_3^2} + \frac{J\left(\frac{\zeta_3}{q}, \frac{1}{q}\right)\zeta_3^{2a}}{1-\zeta_3}\right]+ O\left( q^{g/3+\epsilon g} \right)\\
			=&\zeta_3^2T_2.
		\end{split}
	\end{equation*}
	where the last equality holds since $J(u_1,u_2)=J(u_2,u_1)$. 
	
	We can simplify the sum in \eqref{sum T1}.
	$$\mathcal{T}_1=T_1+T_2+\zeta_3^2T_2=T_1-\zeta_3T_2.$$
	Thus
	\begin{equation*}
		\begin{split}
			\mathcal{T}_1
			=
			\frac{q^{g+1}}{3}\left[ (g+2)J\left( \frac{1}{q},\frac{1}{q} \right) - \frac{\frac{d}{du_1}J(u_1,u_1)\mid_{1/q}}{q} 
			-
			\frac{J\left(\frac{1}{q},\frac{\zeta_3}{q}\right)\zeta_3^{1+g}}{1-\zeta_3}
			-
			\frac{J\left(\frac{\zeta_3^2}{q},\frac{1}{q}\right)\zeta_3^{2+2g}}{1-\zeta_3^2}  
			 \right]
			 + O\left( q^{g/3+\epsilon g} \right),
		\end{split}
	\end{equation*}
	where we use the fact $2g+1\equiv a\pmod3$.
\end{proof}

The following corollary follows easily from Lemma \ref{lemma 1 l=3 K}. The size fo the family is the following.
\begin{corollary}
	Let $\displaystyle J_0\left(u_1,u_2 \right)=\prod_{P\in \mathcal{P}}\left( 1+u_1^{d(P)}+u_2^{d(P)}\right)\left(1-u_1^{d(P)}  \right)\left(1-u_2^{d(P)}  \right),$ then
	\begin{equation*}
		\begin{split}
			\left|\mathcal{C}^{\mathrm{K}}_{3}(g)\right|
			=
			\frac{q^{g+1}}{3}\left[ \left(g+2\right)J_0\left( \frac{1}{q},\frac{1}{q} \right) - \frac{\frac{d}{du_1}J_0(u_1,u_1)\mid_{1/q}}{q}
			-\frac{J_0\left(\frac{1}{q},\frac{\zeta_3}{q}\right)\zeta_3^{1+g}}{1-\zeta_3}
			-
			\frac{J_0\left(\frac{\zeta_3^2}{q},\frac{1}{q}\right)\zeta_3^{2+2g}}{1-\zeta_3^2}  
			\right]\\
			+ O\left( q^{g/3+\epsilon g} \right).
		\end{split}
	\end{equation*}
\end{corollary}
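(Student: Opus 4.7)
The plan is to derive this corollary as an immediate specialization of Lemma \ref{lemma 1 l=3 K} with the auxiliary polynomial $f$ set equal to $1$. First, I would check that plugging $f = 1$ into the definition of $\mathcal{T}_1$ yields $|\mathcal{C}_3^{\mathrm{K}}(g)|$: the condition $(F_1, f) = 1$ becomes vacuous, while $(F_2, F_1 f) = 1$ collapses to $(F_2, F_1) = 1$, and the resulting triple sum over $d_1, d_2, F_1, F_2$ matches exactly the definition of the character-counting sum for the family $\mathcal{C}_3^{\mathrm{K}}(g)$ built from Lemma \ref{lemma K main term} (restricted to odd primitive cubic characters whose restriction to $\mathbb{F}_q^\times$ is $\chi_3$, so that $d_1 + 2 d_2 \equiv 1 \Mod{3}$ and $d_1 + d_2 = g + 1 = D(3) - 1$).

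Next, I would observe that the auxiliary Euler product $\prod_{P \mid f}(1 + u_1^{d(P)} + u_2^{d(P)})^{-1}$ appearing in the definition of $J(u_1, u_2)$ is an empty product when $f = 1$. Consequently $J(u_1, u_2)\big|_{f = 1}$ coincides exactly with the function $J_0(u_1, u_2)$ defined in the statement of the corollary, and the region of analytic continuation $|u_1|, |u_2| < q^{-1/3}$ used in the contour shifts of Lemma \ref{lemma 1 l=3 K} is preserved.

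Finally, I would substitute these two observations into the asymptotic formula of Lemma \ref{lemma 1 l=3 K}, taking the expanded form that appeared at the end of the proof of that lemma (with the two conjugate $\zeta_3$-contributions written separately, rather than collapsed as $2\mathfrak{Re}(\cdot)$) and simply renaming $J$ as $J_0$ throughout. Both $\zeta_3$ terms and the error term $O(q^{g/3 + \epsilon g})$ are unaffected by the specialization $f = 1$, and the resulting identity is precisely the claimed formula for $|\mathcal{C}_3^{\mathrm{K}}(g)|$.

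There is no genuine obstacle here: all the analytic content — the double application of Perron's formula, the residue calculations at $u_2 \in \{u_1, \zeta_3 u_1, \zeta_3^2 u_1\}$ followed by the shift of the $u_1$-contour to $|u_1| = q^{-1/3 + \epsilon}$ and the residues at $u_1 \in \{1/q, 1/(\zeta_3 q), 1/(\zeta_3^2 q)\}$, and the bound on the remaining integral — has already been carried out in the proof of Lemma \ref{lemma 1 l=3 K}. The corollary amounts to the one-line verification that the $f = 1$ specialization is consistent, so it should be recorded as an immediate consequence with essentially no additional argument.
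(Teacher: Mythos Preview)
Your proposal is correct and matches the paper's approach: the paper states explicitly that the corollary ``follows easily from Lemma \ref{lemma 1 l=3 K}'' and gives no further argument, so the intended proof is precisely the $f=1$ specialization you describe, with $J$ reducing to $J_0$ because the product over $P\mid f$ is empty.
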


Now we compute the main term $M_3^{\mathrm{K}}\left( \Phi, D(3)-2 \right)$ of the one-level density of zeros of cubic Dirichlet $L$-functions in the Kummer setting. We use notations in Section \ref{notations} as needed.
\begin{lemma}
	\label{lemma 2 l=3 main term}
	Recall the main term of the one-level density of zeros in the Kummer setting is given in \eqref{main term K}. Let $\ell=3$ for cubic Dirichlet $L$-functions with characters of genus $g$, we have
		\begin{equation*}
			\begin{split}
				M^{\mathrm{K}}_{3}\left(\Phi, g \right)
				=&
				\frac{2}{g}\sum_{1\le n\le N/3}\hat{\Phi}\left(\frac{3n}{g}\right)\sum_{\substack{Q\in \mathcal{P}_{q, n/r}\\r\ge1}}\frac{d(Q)}{|Q|^{3r/2}\left(1+2|Q|^{-1}\right)}\\
				+& 4\mathfrak{Re}\left(\frac{h_1}{g}\sum_{1\le n\le N/3}\hat{\Phi}\left(\frac{3n}{g}\right)\sum_{\substack{Q\in \mathcal{P}_{q, n/r}\\r\ge1}}\frac{d(Q)\left(1-c_Q\right)}{|Q|^{3r/2}\left(1+2|Q|^{-1}\right)}\right)\\
				+&
				\frac{2{h_2}}{g}\sum_{1\le n\le N/3}\hat{\Phi}\left(\frac{3n}{g}\right)\sum_{\substack{Q\in \mathcal{P}_{q, n/r}\\r\ge1}}\frac{2d(Q)^2|Q|^{-1}}{|Q|^{3r/2}\left(1+2|Q|^{-1}\right)^2}
				+O\left( 
				q^{-N/3}q^{-2g/3}q^{\epsilon g} \right),
			\end{split}
		\end{equation*} 
	where $c_Q, h_1, h_2$ are explicitly defined in \eqref{constant c_Q}, \eqref{constant h_1} and \eqref{constant h_2} respectively.
\end{lemma}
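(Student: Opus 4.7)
The plan is to reduce the inner character sum in \eqref{main term K} to the counting quantity of Lemma \ref{lemma 1 l=3 K}, divide by the size of the family, and Taylor-expand the resulting ratio in the parameter $1/g$. Since $\chi_F$ has order three, for any prime $Q\in\mathcal{P}_q$ the value $\chi_F(Q^{3r})=\chi_F(Q)^{3r}$ equals $1$ when $(Q,F)=1$ and $0$ when $Q\mid F$, and the same holds for its conjugate. Hence the inner character sum collapses to $2\,\mathcal{T}_1(Q)$, where $\mathcal{T}_1(Q)$ is the count of Lemma \ref{lemma 1 l=3 K} specialized to $f=Q$.

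Next I would factor out the local contribution at $Q$ from every occurrence of $J$ in $\mathcal{T}_1(Q)$ via $J(u_1,u_2)=J_0(u_1,u_2)\,(1+u_1^{d(Q)}+u_2^{d(Q)})^{-1}$. In particular $J(1/q,1/q)=J_0(1/q,1/q)\cdot a_Q$ with $a_Q=(1+2|Q|^{-1})^{-1}$, while $J(1/q,\zeta_3/q)=J_0(1/q,\zeta_3/q)\cdot\tilde a_Q$ with $\tilde a_Q=(1+|Q|^{-1}+\zeta_3|Q|^{-1})^{-1}$, and the derivative expands as $\tfrac{d}{du_1}J(u_1,u_1)|_{1/q}=J_0'\cdot a_Q+J_0\cdot a_Q'$ where $a_Q'$ is proportional to $d(Q)|Q|^{-1}$. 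Dividing $2\,\mathcal{T}_1(Q)$ by $|\mathcal{C}_3^{\mathrm{K}}(g)|$ from the Corollary following Lemma \ref{lemma 1 l=3 K}, both share the leading factor $\tfrac{q^{g+1}}{3}(g+2)J_0(1/q,1/q)$, so after dividing the ratio becomes $a_Q$ plus corrections of order $1/g$.

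Taylor expanding $(1-X)/(1-Y)\approx 1-X+Y$ in powers of $1/g$, the $J_0'/J_0$ contribution cancels between numerator and denominator, leaving a residual $-a_Q'/(q(g+2))$; after simplification this produces the $h_2$-term, with the factor $2d(Q)^2|Q|^{-1}/(1+2|Q|^{-1})^2$ arising from $-a_Q'/(qa_Q)$ and $h_2$ absorbing a factor of order $1/g$ (hence $1/G$ as noted). The $\zeta_3$-shifted summands contribute a correction of the shape $(a_Q-\tilde a_Q)\beta+\overline{(a_Q-\tilde a_Q)\beta}$, where $\beta$ is proportional to $J_0(1/q,\zeta_3/q)\zeta_3^{1+g}/((g+2)(1-\zeta_3)J_0(1/q,1/q))$; writing $a_Q-\tilde a_Q=a_Q(1-c_Q)$ with $c_Q=\tilde a_Q/a_Q$ matches the factor $(1-c_Q)/(1+2|Q|^{-1})$, and grouping the conjugate into $2\mathfrak{Re}$ and identifying $\beta$ with $h_1$ yields the stated $4\mathfrak{Re}$-term. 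Finally, multiplying by the outer prefactor $\tfrac{2}{g}\sum_{n\le N/3}\hat\Phi(3n/g)\sum_{Q,r}d(Q)|Q|^{-3r/2}$ from \eqref{main term K} (using $D(3)-2=g$) assembles the three displayed contributions.

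The main obstacle will be the bookkeeping of the Taylor expansion to second order in $1/g$: tracking precisely which cross-terms survive after the $J_0'$-cancellation and verifying that the surviving combinations match the literal definitions of $c_Q$, $h_1$, and $h_2$ up to the real-part symmetry. A secondary difficulty is verifying that the Taylor remainder, combined with the $O(q^{g/3+\epsilon g})$ error of Lemma \ref{lemma 1 l=3 K} and the $\Theta(g\,q^{g+1})$ size of the family, collapses into the announced $O(q^{-N/3}q^{-2g/3}q^{\epsilon g})$; this uses $\sum_{Q\in\mathcal{P}_{q,n/r}}d(Q)|Q|^{-3r/2}\ll q^{-n/2}$ together with the uniform boundedness of $\hat\Phi$ to close out the $n$-sum.
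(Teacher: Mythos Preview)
Your overall setup is correct and matches the paper: collapse $\chi_F(Q^{3r})+\overline{\chi_F(Q^{3r})}$ to $2$ times an indicator, identify the inner sum with $\mathcal{T}_1$ specialized to $f=Q$, factor $J=J_0\cdot D_Q$, and divide by $|\mathcal{C}_3^{\mathrm K}(g)|$. The gap is in the division step.

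The constants $h_1,h_2$ in \eqref{constant h_1}--\eqref{constant h_2} are defined with the \emph{exact} quantity $|\mathcal{C}_3^{\mathrm K}(g)|$ in the denominator, and the only error in the lemma is the exponentially small $O(q^{-N/3}q^{-2g/3}q^{\epsilon g})$ inherited from Lemma \ref{lemma 1 l=3 K}. Your Taylor step $(1-X)/(1-Y)\approx 1-X+Y$ introduces a remainder of size $O(1/g^2)$ per $Q$, hence a contribution of polynomial order in $1/g$ to $M_3^{\mathrm K}$, which swamps the stated error. In particular, the claim that ``the $J_0'/J_0$ contribution cancels between numerator and denominator'' is only true to first order; it does not produce the exact identity the lemma asserts.

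The paper avoids any expansion. After splitting by the product rule into $H_1$ (no derivative on $D_Q$) and $H_2$ (the $J_0\cdot D_Q'$ piece), one observes the \emph{exact} add-and-subtract identity
\[
(g+2)J_0-\tfrac{J_0'}{q}-c_Q\,z-\overline{c_Q}\,\overline{z}
=\Big[(g+2)J_0-\tfrac{J_0'}{q}-z-\overline{z}\Big]+(1-c_Q)z+(1-\overline{c_Q})\overline{z},
\]
where $z=J_0(1/q,\zeta_3/q)\zeta_3^{1+g}/(1-\zeta_3)$. The bracket on the right equals $3q^{-(g+1)}|\mathcal{C}_3^{\mathrm K}(g)|$ up to the $O(q^{g/3+\epsilon g})$ error of the Corollary, so dividing by $|\mathcal{C}_3^{\mathrm K}(g)|$ yields $1$ plus $(1-c_Q)h_1+\overline{(1-c_Q)h_1}$ \emph{exactly}, with no $1/g$-remainder. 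The $h_2$-term is then simply $H_2/|\mathcal{C}_3^{\mathrm K}(g)|$, again exact by definition of $h_2$; there is no cancellation of $J_0'$ involved. Replace your Taylor step with this add-and-subtract rearrangement and the proof goes through with the stated error.
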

\begin{proof}
	Recall that for $\ell =3$, $D(3)-2=g$ given by \eqref{equation D(l)}, and the main term of the one-level density of zeros described in \eqref{main term K} comes from when $f$ is a cube. For some monic irreducible polynomial $Q$ and some positive integer $r$,
	$$f=Q^{3r}.$$ 
	Thus $\chi_F(f)=\overline{\chi_F(f)}=1$, and we can write  
	$$M^{\mathrm{K}}_{3}\left(\Phi, g \right)
	=
	\frac{2}{g|\mathcal{C}_3^{\mathrm{K}}\left(g\right)|}\sum_{1\le n\le N/3}\hat{\Phi}\left(\frac{3n}{g}\right)\sum_{\substack{Q\in \mathcal{P}_{q, n/r}\\r\ge1}}\frac{d(Q)\mathcal{T}_1}{|Q|^{3r/2}}.$$
	
	Let $$D_f(u_1,u_2)=\prod_{\substack{P\in \mathcal{P}_q\\P\mid f}}\left( 1+u_1^{d(P)}+u_2^{d(P)} \right)^{-1}.$$
	Since $f=Q^{3r}$, we have
	$$D_f(u_1,u_1)=D_Q(u_1,u_1)=\left( 1+u_1^{d(Q)}+u_2^{d(Q)} \right)^{-1},$$
	and
	$$J(u_1,u_2)=J_0(u_1,u_2)D_Q\left(u_1,u_2\right).$$
	
	Now, using Lemma \ref{lemma 1 l=3 K} for $\mathcal{T}_1$, we can write the sum over $Q$ as two sums 
	$$H_1+H_2+O\left( q^{-n}q^{g/3+\epsilon g} \right),$$ 
	grouped by the same order of derivation on $D_Q(u_1,u_2)$. \\
	We have
	\begin{equation*}
		\begin{split}
			H_1
			=&\frac{q^{g+1}}{3}\sum_{\substack{Q\in \mathcal{P}_{q, n/r}\\r\ge1}}\frac{d(Q)}{|Q|^{3r/2}\left(1+2|Q|^{-1}\right)}
			\times\\
			&\left[ (g+2)J_0\left( \frac{1}{q},\frac{1}{q} \right) - \frac{\frac{d}{du_1}J_0(u_1,u_1)\mid_{1/q}}{q} 
			-
			\frac{c_QJ_0\left(\frac{1}{q},\frac{\zeta_3}{q}\right)\zeta_3^{1+g}}{1-\zeta_3}
			-
			\frac{\overline{c_Q}J_0\left(\frac{\zeta_3^2}{q},\frac{1}{q}\right)\zeta_3^{2+2g}}{1-\zeta_3^2}  
			\right],
		\end{split}
	\end{equation*}
	where we denote
	\begin{equation}
		\label{constant c_Q}
		c_Q=\frac{D_Q\left( \frac{1}{q},\frac{\zeta_3}{q} \right)}{D_Q\left( \frac{1}{q},\frac{1}{q}\right)}.
	\end{equation}
	Taking the derivative of $D_Q\left(u_1,u_1  \right)$, we obtain
	\begin{equation*}
		\begin{split}
			H_2=
			-\frac{q^{g+1}J_0\left( \frac{1}{q},\frac{1}{q} \right)}{3}   
			\sum_{\substack{Q\in \mathcal{P}_{q, n/r}\\r\ge1}}\frac{2d(Q)^2|Q|^{-1}}{|Q|^{3r/2}\left(1+2|Q|^{-1}\right)^2}.
		\end{split}
	\end{equation*}

	We now consider $H_i/\left| \mathcal{C}^{\mathrm{K}}_{3}(g) \right|$.
	\begin{equation*}
		\begin{split}
			\frac{H_1}{\left| \mathcal{C}^{\mathrm{K}}_{3}(g) \right|}
			=&
			\sum_{\substack{Q\in \mathcal{P}_{q, n/r}\\r\ge1}}\frac{d(Q)}{|Q|^{3r/2}\left(1+2|Q|^{-1}\right)}
			+
			2\mathfrak{Re}\left(h_1\sum_{\substack{Q\in \mathcal{P}_{q, n/r}\\r\ge1}}\frac{d(Q)\left(1-c_Q\right)}{|Q|^{3r/2}\left(1+2|Q|^{-1}\right)}\right),
		\end{split}
	\end{equation*}
	where $h_1$ denotes the constant
	\begin{equation}
		\label{constant h_1}
		h_1=
		\frac{q^{g+1}J_0\left(\frac{1}{q},\frac{\zeta_3}{q}\right)\zeta_3^{1+g}}{3 \left| \mathcal{C}^{\mathrm{K}}_{3}(g) \right| \left(1-\zeta_3\right)}.
	\end{equation}
	We remark that $h_1$ is of order $1/g$, and when $d(Q)\equiv 0\pmod3$, $c_Q=1$ and latter two terms in the sum vanish.
	
	Let
	\begin{equation}
		\label{constant h_2}
		h_2=-\frac{q^{g+1}J_0\left( \frac{1}{q},\frac{1}{q} \right)}{3\left| \mathcal{C}^{\mathrm{K}}_{3}(g) \right|}.
	\end{equation}
	Thus we have
	\begin{equation*}
		\begin{split}
			M^{\mathrm{K}}_{3}\left(\Phi, g \right)
			=&
			\frac{2}{g}\sum_{1\le n\le N/3}\hat{\Phi}\left(\frac{3n}{g}\right)\sum_{\substack{Q\in \mathcal{P}_{q, n/r}\\r\ge1}}\frac{d(Q)}{|Q|^{3r/2}\left(1+2|Q|^{-1}\right)}\\
			+&4\mathfrak{Re}\left( \frac{h_1}{g}\sum_{1\le n\le N/3}\hat{\Phi}\left(\frac{3n}{g}\right)\sum_{\substack{Q\in \mathcal{P}_{q, n/r}\\r\ge1}}\frac{d(Q)\left(1-c_Q\right)}{|Q|^{3r/2}\left(1+2|Q|^{-1}\right)}\right)\\
			+&
			\frac{2{h_2}}{g}\sum_{1\le n\le N/3}\hat{\Phi}\left(\frac{3n}{g}\right)\sum_{\substack{Q\in \mathcal{P}_{q, n/r}\\r\ge1}}\frac{2d(Q)^2|Q|^{-1}}{|Q|^{3r/2}\left(1+2|Q|^{-1}\right)^2}
			+O\left( 
			q^{-N/3}q^{-2g/3}q^{\epsilon g} \right),
		\end{split}
	\end{equation*} 
	where $h_2$ is also of order $1/g$.
\end{proof}

%
\subsubsection{The Error Term}
Recall that if $f$ is not a cube, we have the error term $E^{\mathrm{K}}_{3}\left( \Phi, g \right)$ of the one-level density of zeros expressed in \eqref{error term K}. 
We prove the following upper bound.
\begin{lemma}
	\label{lemma 1 l=3 error term}
	Let $f\in\mathbb{F}_q[t]$ be a monic polynomial and $\chi_{3}$ as defined in \eqref{equation chi_ell}. Let
	$$\mathcal{T}_{2}
	=
	\sum_{ \substack{\chi \ \text{primitive cubic }\\ \text{genus}(\chi)=g\\ \chi\mid_{\mathbb{F}_q^\times}=\chi_3} }\chi(f).$$
	Then 
	\begin{equation*}
		\begin{split}
			\mathcal{T}_{2}
			=
			\sum_{ \substack{d_1+d_2=g+1 \\ d_1+2d_2\equiv 1 \Mod{3} }} \sum_{F_1\in \mathcal{H}_{q,d_1}}\chi_f(F_1) \sum_{ \substack{F_2\in \mathcal{H}_{q,d_2} \\ (F_2,F_1)=1}}\chi_f(F_2)^2
			\ll
			gq^{g/2}q^{\epsilon d(f)}.
		\end{split}
	\end{equation*}
	The error term for the one-level density of zeros of Dirichlet $L$-functions in the Kummer setting is given in \eqref{error term K}. For $\ell=3$, we have
	$$E^{\mathrm{K}}_{3}\left( \Phi, g \right) 
	\ll
	q^{N/2}q^{-g/2}q^{\epsilon N}.
	$$  
\end{lemma}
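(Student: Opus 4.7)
The plan is to apply Lemma \ref{lemma K main term} to $\mathcal{T}_2$ to expand over the conductor decomposition $F=F_1F_2^2$, then invoke reciprocity (Lemma \ref{reciprocity}) term by term. Since the primitive-character sum already forces $(F_1F_2,f)=1$, we get $\chi_F(f)=\chi_{F_1}(f)\chi_{F_2}(f)^2=\chi_f(F_1)\chi_f(F_2)^2$, which is the first displayed equality of the lemma.

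Next, I would analyze the two-variable generating series
\begin{equation*}
	\mathcal{S}_f(u_1,u_2)=\sum_{F_1\in\mathcal{H}_q}\chi_f(F_1)u_1^{d(F_1)}\sum_{\substack{F_2\in\mathcal{H}_q\\(F_2,F_1)=1}}\chi_f(F_2)^2 u_2^{d(F_2)}.
\end{equation*}
By the same Euler-product manipulation used in Lemma \ref{lemma 1 l=3 K}, this collapses to $\prod_{P\nmid f}\bigl(1+\chi_f(P)u_1^{d(P)}+\chi_f(P)^2 u_2^{d(P)}\bigr)$, which I would factor as $\mathcal{L}_q(u_1,\chi_f)\mathcal{L}_q(u_2,\chi_f^2)J_f(u_1,u_2)$, where $J_f$ is an auxiliary Euler product (absorbing the finitely many local factors at primes dividing $f$) absolutely convergent on the bicircle $|u_1|,|u_2|<q^{-1/2}$; a short local-factor expansion using $\chi_f(P)^3=1$ for $P\nmid f$ shows every non-trivial term in the compensated factor has total $u$-degree $\ge 2$, giving the claimed region of convergence. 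Because $f$ is not a cube, both $\chi_f$ and $\chi_f^2$ are non-principal order-$3$ characters whose conductors divide $f$, so the two $L$-factors are polynomials and are holomorphic throughout the bicircle. Applying Perron's formula (Lemma \ref{lemma Perron's formula}) twice and pushing both contours out to circles just inside $|u_i|=q^{-1/2}$ crosses no poles; Lemma \ref{Lindelof upper bound} then bounds each $L$-factor by $q^{\epsilon d(f)}$ on the contour, while the $|u_i|^{-d_i}$ from Perron contributes $q^{(d_1+d_2)/2}=q^{(g+1)/2}$. Summing over the $O(g)$ admissible pairs $(d_1,d_2)$ with $d_1+d_2=g+1$ yields $\mathcal{T}_2\ll gq^{g/2}q^{\epsilon d(f)}$.

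For the bound on $E_3^{\mathrm{K}}(\Phi,g)$, I would substitute this estimate into \eqref{error term K}, use $|\mathcal{C}_3^{\mathrm{K}}(g)|\gg gq^{g+1}$ (from the preceding Corollary), the uniform bound $|\hat\Phi|\ll 1$, and the Prime Polynomial Theorem in the form $\sum_{f\in\mathcal{M}_{q,n}}\Lambda(f)|f|^{-1/2}\le q^{n/2}$, obtaining
\begin{equation*}
	E_3^{\mathrm{K}}(\Phi,g)\ll \frac{1}{g\cdot gq^{g+1}}\sum_{n\le N}q^{n/2}\cdot gq^{g/2}q^{\epsilon n}\ll q^{N/2}q^{-g/2}q^{\epsilon N},
\end{equation*}
as stated. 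The main obstacle is controlling $J_f$ uniformly in $f$ on the shifted contour, so that neither the Euler product nor the contour shift introduces hidden $f$-dependence beyond the $q^{\epsilon d(f)}$ supplied by Lemma \ref{Lindelof upper bound}; combined with the non-principality of $\chi_f$ (which is exactly where the non-cube hypothesis on $f$ enters), this is the essential content of the bound.
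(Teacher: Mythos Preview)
Your approach is essentially the same as the paper's: expand via Lemma~\ref{lemma K main term}, apply reciprocity, form the two-variable generating series, factor out $\mathcal{L}_q(u_1,\chi_f)\mathcal{L}_q(u_2,\chi_f^2)$, apply Perron on $|u_i|\approx q^{-1/2}$, and bound the $L$-factors by Lemma~\ref{Lindelof upper bound}. The deduction of the bound on $E_3^{\mathrm K}$ is also as in the paper.

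The one point where you leave a gap is precisely the one you flag at the end. Your $J_f$ is not uniformly bounded in $f$ on $|u_i|=q^{-1/2}$ by the degree-$\ge 2$ observation alone: expanding the local factor of your $J_f$ one finds terms $-\chi_f(P)^2u_1^{2d(P)}$ and $-\chi_f(P)u_2^{2d(P)}$, so $J_f$ secretly contains $\mathcal{L}_q(u_1^2,\chi_f^2)^{-1}\mathcal{L}_q(u_2^2,\chi_f)^{-1}$, and these inverse $L$-values sit at $\mathrm{Re}(s)=1$ on your contour, where absolute convergence alone gives no uniform lower bound. The paper resolves this by factoring the generating series more finely as
\[
\mathcal{S}(u_1,u_2)=\frac{\mathcal{L}_q(u_1,\chi_f)\,\mathcal{L}_q(u_2,\chi_f^2)}{\mathcal{L}_q(u_1^2,\chi_f^2)\,\mathcal{L}_q(u_2^2,\chi_f)}\prod_{P}\frac{1+\chi_f(P)u_1^{d(P)}+\chi_f(P)^2u_2^{d(P)}}{(1+\chi_f(P)u_1^{d(P)})(1+\chi_f(P)^2u_2^{d(P)})},
\]
and then invoking Lemma~\ref{Lindelof lower bound} to bound the two denominator $L$-functions from below by $q^{-\epsilon d(f)}$. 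If you pull those two factors out of your $J_f$ and cite Lemma~\ref{Lindelof lower bound}, your argument becomes complete and identical to the paper's.
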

\begin{proof}
	First we give an upper bound on $\mathcal{T}_2$.\\
	By Lemma \ref{lemma K main term}, we have
	$$
	\mathcal{T}_2=
	\sum_{ \substack{d_1+d_2=g+1 \\ d_1+2d_2\equiv 1 \Mod{3} }} \sum_{F_1\in \mathcal{H}_{q,d_1}}\chi_f(F_1) \sum_{ \substack{F_2\in \mathcal{H}_{q,d_2} \\ (F_2,F_1)=1}}\chi_f(F_2)^2.$$
	Then we consider the generating series for the sums over $F_1$ and $F_2$,
	$$\mathcal{S}
	(u_1,u_2)=\sum_{F_1\in \mathcal{H}_{q}}\chi_f(F_1)u_1^{d(F_1)} \sum_{ \substack{F_2\in \mathcal{H}_{q} \\ (F_2,F_1)=1}}\chi_f(F_2)^2u_2^{d(F_2)}.$$
	
	The sum over $F_2$ can be written as the product
	$$\frac{ \prod_{\substack{P\in \mathcal{P}_q}} \left( 1+\chi_f(P)^2u_2^{d(P)} \right)} {\prod_{P \mid F_1}\left( 1+\chi_f(P)^2u_2^{d(P)} \right)},$$
	thus factoring out the $L$-functions
	$$\mathcal{S}
	(u_1,u_2)=
	\frac{\mathcal{L} \left(u_2,\chi_f^2\right)}{\mathcal{L}\left(u_2^2,\chi_f \right)}\sum_{F_1\in \mathcal{H}_{q}}\frac{\chi_f(F_1)u_1^{d(F_1)}}{\prod_{P \mid F_1}\left( 1+\chi_f(P)^2u_2^{d(P)} \right)} .$$
	Similarly, we write the sum over $F_1$ as the product
	$$\prod_{\substack{P\in \mathcal{P}_q }}\left( 1+\frac{\chi_f(P)u_1^{d(P)}}{1+\chi_f(P)^2u_2^{d(P)}} \right)
	=\frac{\mathcal{L} \left(u_1,\chi_f\right)}{\mathcal{L}\left(u_1^2, \chi_f^2\right)}\prod_{P \in \mathcal{P}_q}\frac{1+\chi_f(P)u_1^{d(P)}+\chi_f(P)^2u_2^{d(P)}}{\left( 1+\chi_f(P)u_1^{d(P)} \right)\left(1+\chi_f(P)^2u_2^{d(P)}  \right)}.$$
	Combining the two, we obtain
	\begin{equation*}
		\begin{split}
			\mathcal{S}
			(u_1,u_2)
			=\frac{\mathcal{L} \left(u_1,\chi_f\right)\mathcal{L} \left(u_2,\chi_f^2\right)} {\mathcal{L}\left(u_1^2, \chi_f^2\right)\mathcal{L}\left(u_2^2, \chi_f\right)} 
			\prod_{P \in \mathcal{P}_q}\frac{1+\chi_f(P)u_1^{d(P)}+\chi_f(P)^2u_2^{d(P)}}{\left( 1+\chi_f(P)u_1^{d(P)}\right)\left(1+\chi_f(P)^2u_2^{d(P)} \right)}.
		\end{split}
	\end{equation*}
	Note here that the product over $P$ is absolutely convergent for $|u_1|,|u_2|<q^{-1/2}.$
	
	Using Perron's formula,
	\begin{equation*}
		\begin{split}
			\mathcal{T}_2
			=
			\sum_{ \substack{d_1+d_2=g+1 \\ d_1+2d_2\equiv 1 \Mod{3} }}\frac{1}{\left(2\pi i\right)^2}\oint_{|u_1|=q^{-1/2}} \oint_{|u_2|=q^{-1/2}}  \frac{\mathcal{S}(u_1,u_2)}{u_1^{d_1}u_2^{d_2}}\frac{du_1}{u_1}\frac{du_2}{u_2}.
		\end{split}
	\end{equation*}
	Then by the Lindel\"{o}f hypothesis (Lemma \ref{Lindelof upper bound} and Lemma \ref{Lindelof lower bound}), we obtain the following bounds on the integrals. For $i\in \{1,2\}$
	\begin{equation*}
		\frac{1}{2\pi i}\oint_{|u_i|=q^{-1/2}} \frac{\mathcal{L}\left( u_i, \chi_f^i\right)}{\mathcal{L}\left( u_i^2, \chi_f^{2i}\right)u_i^{d_i}}\frac{d}{du_i}
		\ll
		q^{d_i/2}q^{\epsilon d(f)}.
	\end{equation*}
	Hence 
	$$\mathcal{T}_2
	\ll
	\sum_{ \substack{d_1=1 }}^{g+1}q^{g/2}q^{\epsilon d(f)}
	\ll
	q^{g/2}q^{\epsilon d(f)}.
	$$
	
	Now for $\overline{\mathcal{T}_2}$, since Lemma \ref{Lindelof upper bound} and Lemma \ref{Lindelof lower bound} hold for $\mathcal{L}\left(u_i, \overline{\chi_f}^i\right)$ and $\mathcal{L}\left(u_i^2, \overline{\chi_f}^{2i}\right)$ for $i\in \{1,2\}$, we have
	$$\overline{\mathcal{T}_2}
	\ll
	q^{g/2}q^{\epsilon d(f)}.
	$$
	
	The error term in the Kummer setting is as given in \eqref{error term K} for $\ell=3$.
	\begin{equation*}
		E^{\mathrm{K}}_{3}\left( \Phi, g \right)
		=
		\frac{1}{g\left|\mathcal{C}_3^\mathrm{K}(g) \right|} \sum_{1\le n\le N}\hat{\Phi}\left(\frac{n}{g}\right)\sum_{\substack{f\in \mathcal{M}_{q,n}\\ f \ \text{non} \text{cube}}}\frac{\Lambda(f)}{|f|^{1/2}}\left(\mathcal{T}_2+\overline{\mathcal{T}_2}\right).
	\end{equation*}
	We trivially bound the double sum over $f$ and $n$, then divide by $\left| \mathcal{C}^{\mathrm{K}}_{3}(g) \right|$ to obtain
	\begin{equation*}
		E^{\mathrm{K}}_{3}\left( \Phi, g \right)
		\ll
		q^{N/2}q^{-g/2}q^{\epsilon N}.
	\end{equation*}
\end{proof}

\subsubsection{Proofs for $\ell=3$ Kummer setting results}
\begin{proof}[Proof of Theorem \ref{Theorem 1}]
	We use some notations in Section \ref{notations} and recall that $\mathcal{C}_3^\mathrm{K}\left(g\right)$ denotes the family of cubic Dirichlet $L$-functions in the Kummer setting \ref{notation K fam}. The one-level density of zeros for cubic Dirichlet $L$-functions in the Kummer setting is 
	\begin{equation}\label{equation 1LD K3}
		\Sigma_3^\mathrm{K}\left(\Phi, g\right)
		=
		\hat{\Phi}(0)
		-
		\mathcal{A}_{3}^{\mathrm{K}}\left(\Phi, g\right),
	\end{equation}
	where 
	\begin{equation*}
		\mathcal{A}^{\mathrm{K}}_{3}\left(\Phi, g \right)
		=\frac{1}{g|\mathcal{C}^{\mathrm{K}}_3(g)|}\sum_{1\le n\le N}\hat{\Phi}\left(\frac{n}{g}\right)\sum_{f\in \mathcal{M}_{q,n}}\frac{\Lambda(f)}{|f|^{1/2}}{\sum_{\substack{F\in\mathcal{C}_3^\mathrm{K}\left(g\right)\\ }}}\left[\chi_F(f) + \overline{\chi_F(f)} \right].
	\end{equation*}
	Note that $\Sigma_3^\mathrm{K}\left(\Phi, g\right)$ is the $\ell=3$ case of \eqref{1LD K} and $\mathcal{A}^{\mathrm{K}}_{3}\left(\Phi, g \right)$ is the $\ell=3$ case in \eqref{equation A Kummer}.
	
	Using Lemma \ref{lemma 2 l=3 main term} and Lemma \ref{lemma 1 l=3 error term}, we have the following result given in
	\eqref{equation final l=3}.
	\begin{equation*}
		\begin{split}
			\Sigma_3^\mathrm{K}\left(\Phi, g\right)
			=
			\hat{\Phi}&(0)
			-
			\frac{2}{g}\sum_{1\le n\le N/3}\hat{\Phi}\left(\frac{3n}{g}\right)\sum_{\substack{Q\in \mathcal{P}_{q, n/r}\\r\ge1}}\frac{d(Q)}{|Q|^{3r/2}\left(1+2|Q|^{-1}\right)}\\
			+&4\mathfrak{Re}\left( \frac{h_1}{g}\sum_{1\le n\le N/3}\hat{\Phi}\left(\frac{3n}{g}\right)\sum_{\substack{Q\in \mathcal{P}_{q, n/r}\\r\ge1}}\frac{d(Q)\left(1-c_Q\right)}{|Q|^{3r/2}\left(1+2|Q|^{-1}\right)}\right)\\
			+&
			\frac{2{h_2}}{g}\sum_{1\le n\le N/3}\hat{\Phi}\left(\frac{3n}{g}\right)\sum_{\substack{Q\in \mathcal{P}_{q, n/r}\\r\ge1}}\frac{2d(Q)^2|Q|^{-1}}{|Q|^{3r/2}\left(1+2|Q|^{-1}\right)^2}
			+O\left(q^{N/2}q^{-g/2}q^{\epsilon N}\right).
		\end{split}
	\end{equation*}
	Here $c_Q$, $h_1$ and $h_2$ are explicitly defined in the main term lemma, Lemma \ref{lemma 2 l=3 main term}, by \eqref{constant c_Q}, \eqref{constant h_1} and \eqref{constant h_2} respectively.
\end{proof}
Computing the limit as $g\rightarrow \infty$, we confirm the symmetry type of the family.
\begin{proof} [Proof of corollary 2.4 for $\ell=3$ in the Kummer setting]
	Let $N<g$. Then
	$$\lim_{g\rightarrow \infty}\Sigma_3^\mathrm{K}\left(\Phi, g\right)
	=\hat{\Phi}(0),
	$$	
	since the double sums over $n$ and $Q$ above are of constant size.
	
	Furthermore, we compute the two integrals below and confirm that
	$$
	\hat{\Phi}(0)
	=
	\int_{-\infty}^{\infty}\hat{\Phi}(y)\hat{\mathcal{W}}_{U(g)}(y)dy
	=
	\int_{-\infty}^{\infty}\hat{\Phi}(y)\delta_0(y)dy,$$
	where $\mathcal{W}_{U(g)}(y)=\delta_0(y)$ denotes the one-level scaling density of the group of $g\times g$ unitary matrices.
\end{proof}
This proves that the symmetry type of the family is unitary and it supports Katz and Sarnak's philosophy.
\\
\\


\subsection{The quartic case}
\subsubsection{The Main Term}

First we compute the following sum over the family of quartic Dirichlet $L$-functions and derive the size of the family $\left|\mathcal{C}_4^{\mathrm{K}}(g)\right|$.
\begin{lemma}
	\label{lemma 1 l=4 K}
	For $f$ a monic polynomial, let 
	\begin{equation*}
		\mathcal{K}_{1}=\sum_{\substack{d_1+d_3=\frac{2g}{3}+1\\d_1+3d_3\equiv 1\Mod 4}}
		\sum_{\substack{ F_1\in \mathcal{H}_{q,d_1} \\ (F_1,f)=1  }}\sum_{\substack{ F_3\in \mathcal{H}_{q,d_3} \\ (F_3,F_1f)=1  }}1.
	\end{equation*}	
	Then
	\begin{equation*}
		\begin{split}
			\mathcal{K}_1
			=
			\frac{q^{G+2}}{2}\left[ \left(G+1\right)J\left( \frac{1}{q},\frac{1}{q} \right) - \frac{\frac{d}{du_1}J(u_1,u_1)\mid_{1/q}}{q} \right]+ O\left( q^{G/3+\epsilon g} \right),
		\end{split}
	\end{equation*}
	where $\displaystyle G=\frac{2g}{3}+1$, and
	\begin{equation*}
		\begin{split}
			J(x,y)=\prod_{P\in \mathcal{P}_q}\left(1+x^{d(P)} +y^{d(P)}\right)\left(1-x^{d(P)} \right)\left(1-y^{d(P)} \right) \prod_{\substack{P\in \mathcal{P}_q\\ P\mid f}}\left(1+x^{d(P)}+y^{d(P)}\right)^{-1}.\\
		\end{split}
	\end{equation*}
\end{lemma}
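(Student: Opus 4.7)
The proof closely parallels that of Lemma \ref{lemma 1 l=3 K}. I would first form the generating series
\[
\mathcal{S}(u_1, u_3) = \sum_{\substack{F_1 \in \mathcal{H}_q \\ (F_1, f) = 1}} u_1^{d(F_1)} \sum_{\substack{F_3 \in \mathcal{H}_q \\ (F_3, F_1 f) = 1}} u_3^{d(F_3)},
\]
and, because the summation conditions are identical to those in the cubic case, factor it via the same Euler-product manipulation as $\mathcal{S}(u_1, u_3) = \mathcal{Z}_q(u_1)\mathcal{Z}_q(u_3)\,J(u_1, u_3)$, with $J$ analytic in $|u_i| < q^{-1/3}$. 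Perron's formula applied in each variable then expresses $\mathcal{K}_1$ as a double contour integral on $|u_1| = q^{-3}$ and $|u_3| = q^{-2}$.

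Next I would resolve the congruence conditions. Writing $g = 3m$ so that $G = 2m+1$ is odd, the system $d_1 + d_3 = G$, $d_1 + 3d_3 \equiv 1 \pmod{4}$ forces $2d_3 \equiv 1 - G \pmod{4}$, pinning $d_3$ to a single residue class modulo $2$. Parametrizing $d_3 = 2k + a''$ for a fixed $a'' \in \{0,1\}$, the index $k$ ranges over $0 \le k \le (G-1)/2$, so $2(K+1) = G+1$. Summing the finite geometric series $\sum_{k=0}^{K}(u_1/u_3)^{2k}$ in closed form and splitting the resulting integrand exactly as in Lemma \ref{lemma 1 l=3 K} produces two pieces; the second has no poles in $u_1$ inside $|u_1| = q^{-3}$ and therefore integrates to zero by Cauchy, leaving a single surviving piece to analyze.

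For this surviving piece I would integrate first in $u_3$, picking up residues at the two square roots $u_3 = \pm u_1$ of $u_3^2 = u_1^2$, the quartic analogue of the three cube roots appearing in the cubic case. The $u_3 = u_1$ residue produces a $u_1$-integral with a double pole at $u_1 = 1/q$; shifting the contour to $|u_1| = q^{-1/3+\epsilon}$ and extracting this residue yields the stated main term, while the integral on the enlarged contour is bounded using the convergence estimate for $J$ and contributes the error $O(q^{G/3+\epsilon g})$. The $u_3 = -u_1$ residue gives a $u_1$-integral with simple poles at $u_1 = \pm 1/q$; since $G+1$ is even (so $(-1)^{G+1} = 1$) and $J$ is symmetric in its two arguments by construction, the contributions involving $J(1/q,-1/q)$ and $J(-1/q,1/q)$ appear with opposite signs and cancel, absorbing this entire branch into the error term. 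This cancellation in the $u_3 = -u_1$ branch is the most delicate step and is precisely what forces the final formula to depend only on $J$ evaluated at $(1/q, 1/q)$, with no constants analogous to the $h_1$ appearing in the cubic case.
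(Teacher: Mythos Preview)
Your proposal is correct and follows essentially the same route as the paper's proof: form the generating series and factor out $\mathcal{Z}_q(u_1)\mathcal{Z}_q(u_3)J(u_1,u_3)$, apply Perron twice, sum the geometric progression coming from the congruence parametrization, discard the piece with no $u_1$-poles inside the small circle, pick up residues at $u_3=\pm u_1$, and observe that the $u_3=-u_1$ branch cancels by the symmetry $J(x,y)=J(y,x)$ together with the parity $(-1)^{G+1}=1$. The only cosmetic difference is that the paper fixes $g$ even (so $a''=0$) and notes the odd case is symmetric, whereas you carry a general $a''\in\{0,1\}$; as you can check, the extra factor $(-1)^{a''}$ multiplies both residues in the $K_{-1}$ branch and so does not affect the cancellation.
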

\begin{proof}
	We first rewrite the outer most sum of $\mathcal{K}_1$.\\
	The degree of the conductor is the integer $\frac{2g}{3}+1$ given in \eqref{equation D(l)}, so $g\equiv 0\Mod{3}$. We let $g$ be even for convenience, so $d_3$ must be even. The case when $g$ is odd is the symmetric case when $d_1$ must be even. To simplify some notations, we let 
	\begin{equation}
		\label{Gv notations}
		v=\frac{g}{3} \ \ \text{and}\ \  G=\frac{2g}{3}+1=2v+1.
	\end{equation}
	
	Using $d_1+d_3=G$ and $d_1+3d_3\equiv1\Mod{4}$, we found that 
	\begin{equation*}
		2d_3\equiv 0 \Mod4 \implies d_3\equiv 0 \Mod{2}.
	\end{equation*}
	Since $G$ is odd, for some integer $k_1$,
	\begin{equation}
		\label{equation d_1}
		2k_1+1=d_1\le G.
	\end{equation}
	Lastly, $d_3=G-d_1$, which simplifies to 
	\begin{equation}
		\label{equation d_3}
		d_3=2\left(v-k_1\right).
	\end{equation}
	We also note that the congruence 
	$$d_1+3d_3\equiv 1 \Mod4$$
	is satisfied for all $d_1, d_3$ in agreement with equations \eqref{equation d_1} and \eqref{equation d_3}. Thus, we are summing over all non-negative integers $k_1\le v$.
	Hence we rewrite the sum as 
	\begin{equation*}
		\begin{split}
			\mathcal{K}_{1}
			=
			\sum_{\substack{  d_1=k_1\le v \\d_3=2\left(v-k_1\right) }} \sum_{\substack{ F_1\in \mathcal{H}_{q,d_1} \\ (F_1,f)=1  }}\sum_{\substack{ F_3\in \mathcal{H}_{q,d_3} \\ (F_3,F_1f)=1  }}1.
		\end{split}
	\end{equation*}
	
	Then, similar to the cubic case, we consider the generating series for the sum over $F_1$ and $F_3$ of $\mathcal{K}_{1}$
	\begin{equation*}
		\mathcal{S}
		(u_1,u_3)=\sum_{\substack{F_1\in \mathcal{H}_q\\ (F_1, f)=1}}u_1^{d(F_1)}
		\sum_{\substack{F_3\in \mathcal{H}_q\\ (F_3, F_1f)=1}}u_3^{d(F_3)}.
	\end{equation*}
	We can write it as the product, 
	\begin{equation*}
		\mathcal{S}
		(u_1, u_3)=\frac{\prod_{P\in \mathcal{P}_q}\left(1+\frac{u_1^{d(P)}}{1+u_3^{d(P)}}\right)\prod_{P\in \mathcal{P}_q}\left(1+u_3^{d(P)}\right)}{\prod_{\substack{P\in \mathcal{P}_q\\P\mid f}}\left(1+\frac{u_1^{d(P)}}{1+u_3^{d(P)}}\right)
			\prod_{\substack{P\in \mathcal{P}_q\\P \mid f}}\left(1+u_3^{d(P)}\right)} 
		=\frac{\prod_{P\in \mathcal{P}_q}\left(1+u_1^{d(P)}+u_3^{d(P)}\right)}{\prod_{\substack{P\in \mathcal{P}_q\\ P\mid f}}\left(1+u_1^{d(P)}+u_3^{d(P)}\right)} .
	\end{equation*}
	Hence 
	\begin{equation*}
		\mathcal{S}
		(u_1,u_3)=\mathcal{Z}_q(u_1)\mathcal{Z}_q(u_3)J(u_1,u_3),
	\end{equation*}
	where 
	\begin{equation*}
		\begin{split}
			J&(u_1,u_3)=\prod_{P\in \mathcal{P}_q}\left(1+u_1^{d(P)} +u_3^{d(P)}\right)\left(1-u_1^{d(P)} \right)\left(1-u_3^{d(P)} \right) \prod_{\substack{P\in \mathcal{P}_q\\ P\mid f}}\left(1+u_1^{d(P)}+u_3^{d(P)}\right)^{-1}\\
			=&\prod_{P\in \mathcal{P}}\left( 1-u_1^{2d(P)}-u_3^{2d(P)}-(u_1u_3)^{d(P)}+(u_1^2u_3)^{d(P)}+(u_1u_3^2)^{d(P)}\right)\prod_{\substack{P\in \mathcal{P}_q\\ P\mid f}}\left(1+u_1^{d(P)}+u_3^{d(P)}\right)^{-1}.
		\end{split}
	\end{equation*}
	Note that $J\left( u_1, u_3\right)$ has analytic continuation when $|u_1|<q^{-1/3}$ and $|u_3|<q^{-1/3}$.
	
	Using Perron's formula twice,
	\begin{equation*}
		\mathcal{K}_1
		=\sum_{\substack{k_1=0}}^v
		\frac{1}{(2\pi i)^2}\oint\oint \frac{J(u_1,u_3)}{(1-qu_1)(1-qu_3)u_1^{2k_1+1}u_3^{2v-2k_1}}\frac{du_3}{u_3}\frac{du_1}{u_1}.
	\end{equation*}
	Computing the sum over $k_1$ first, we have
	\begin{equation*}
		\begin{split}
			\sum_{k_1=0}^{v}\frac{1}{u_1^{2k_1+1}u_3^{2v-2k_1}}
			=&\frac{u_1}{u_3^{2v}\left(u_3^2-u_1^2\right)}\left[\left(\frac{u_3^2}{u_1^2}\right)^{v+1}-1\right]
			\\=&
			\frac{1}{\left( u_3^2-u_1^2 \right)}\left[\frac{u_3^2}{u_1^{2v+1}}-\frac{u_1}{u_3^{2v}}\right].
		\end{split}
	\end{equation*}
	Thus
	\begin{equation*}
		\begin{split}
			\mathcal{K}_1
			=
			\frac{1}{(2\pi i)^2}\oint_{|u_1|=q^{-3}}\oint_{|u_2|=q^{-2}} \frac{J(u_1,u_3)}{(1-qu_1)(1-qu_3)(u_3^2-u_1^2)}\left[ \frac{u_3}{u_1^{2v+2}}-\frac{1}{u_3^{2v+1}} \right]du_3du_1.
		\end{split}
	\end{equation*}  
	We write the integral above as the difference of two integrals correspondingly and note that the second one 
	\begin{equation*}
		\frac{1}{(2\pi i)^2}\oint_{|u_1|=q^{-3}}\oint_{|u_2|=q^{-2}} \frac{-J(u_1,u_3)}{u_3^{2v+1}(1-qu_1)(1-qu_3)(u_3^2-u_1^2)} du_3du_1=0
	\end{equation*}
	since the integrand over $u_1$ has no poles inside the circle $|u_1|=q^{-3}$.\\
	Hence
	\begin{equation*}
		{\mathcal{K}_{1}}=\frac{1}{(2\pi i)^2}\oint_{|u_1|=q^{-3}}\oint_{|u_3|=q^{-2}} \frac{u_3J(u_1,u_3)}{u_1^{2v+2}(1-qu_1)(1-qu_3)(u_3^2-u_1^2)} du_3du_1,
	\end{equation*} 
	where the poles of the integrand integrating over $u_3$ are $u_1, -u_1$. 
	
	Computing the residue at the poles above, we have
	\begin{equation}
		\label{equation du1}
		\begin{split}
			\mathcal{K}_1
			=
			\frac{1}{2\pi i}\oint_{|u_1|=q^{-3}}\frac{1}{2u_1^{2v+2}(1-qu_1)}
			\left[ \frac{J(u_1,u_1)}{1-qu_1}+\frac{J(u_1, -u_1)}{1+qu_1}\right]du_1.
		\end{split}
	\end{equation}
	Now to integrate over $u_1$, we write \eqref{equation du1} as the sum of two integrals 
	$$\mathcal{K}_1=K_1+K_{-1}$$ 
	where for $\beta \in\{1, -1\}$,
	\begin{equation*}
		K_{\beta}
		=
		\frac{1}{2\pi i}\oint_{|u_1|=q^{-3}}\frac{J(u_1, \beta u_1)}{2u_1^{2v+2}(1-qu_1)(1-q\beta u_1)}du_1.
	\end{equation*}
	For each $K_\beta$, we shift the contour to $|u_1|=q^{-1/3+\epsilon}$ and encounter the pole $1/q$ and $1/q\beta$. We compute residues at the corresponding poles and bound the integral on circle $|u_1|=q^{-1/3+\epsilon}$.
	
	$K_1$ has a double pole at $1/q$, and we obtain
	\begin{equation*}
		\begin{split}
			K_1
			=
			\frac{q^{G+2}}{2}\left[ \left(G+1\right)J\left( \frac{1}{q},\frac{1}{q} \right) - \frac{\frac{d}{du_1}J(u_1,u_1)\mid_{1/q}}{q} \right]+ O\left( q^{G/3+\epsilon g} \right).
		\end{split}
	\end{equation*}
	$K_{-1}$ has two simple poles at $u_1=1/q$ and $u_1=-1/q$. 
	\\
	Computing the residues, we have 
	\begin{equation*}
		\begin{split}
			K_{-1}
			=
			\frac{q^{G}}{4}\left[  J\left(\frac{1}{q},\frac{-1}{q}\right) - J\left(\frac{-1}{q},\frac{1}{q}\right) \right]+ O\left( q^{G/3+\epsilon g} \right),
		\end{split}
	\end{equation*}
	where since $\displaystyle J\left(\frac{1}{q},\frac{-1}{q}\right)= J\left(\frac{-1}{q},\frac{1}{q}\right)$, $K_{-1}=O\left( q^{G/3+\epsilon g} \right)$.
	
	Thus
	\begin{equation*}
		\begin{split}
			\mathcal{K}_1
			=
			\frac{q^{G+2}}{2}\left[ \left(G+1\right)J\left( \frac{1}{q},\frac{1}{q} \right) - \frac{\frac{d}{du_1}J(u_1,u_1)\mid_{1/q}}{q} \right]+ O\left( q^{G/3+\epsilon g} \right).
		\end{split}
	\end{equation*}
This gives the result as desired.
\end{proof}

From Lemma \ref{lemma 1 l=4 K}, the size of the family is the following.
\begin{corollary}
	Let $\displaystyle J_0\left(u \right)=\prod_{P\in \mathcal{P}}\left( 1 + 2u^{d(P)}\right)\left(1-u^{d(P)}  \right)^2,$ then
	\begin{equation*}
		\begin{split}
			\left|\mathcal{C}^{\mathrm{K}}_{4}(g)\right|
			=
			\frac{q^{G+2}}{2}\left[ \left(G+1\right)J_0\left( \frac{1}{q} \right) - \frac{\frac{d}{du}J_0(u)\mid_{1/q}}{q} \right]+ O\left( q^{G/3+\epsilon g} \right),
		\end{split}
	\end{equation*}
where $G=2g/3+1$ as in \eqref{Gv notations}.
\end{corollary}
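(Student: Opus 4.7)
The plan is to observe that this corollary is a direct specialization of Lemma \ref{lemma 1 l=4 K} to the choice $f = 1$. By the definition of $\mathcal{C}_4^{\mathrm{K}}(g)$ in \eqref{notation K fam} together with Lemma \ref{lemma K main term}, the cardinality of the family is
$$|\mathcal{C}_4^{\mathrm{K}}(g)| = \sum_{\substack{d_1 + d_3 = G \\ d_1 + 3 d_3 \equiv 1 \Mod{4}}} \sum_{F_1 \in \mathcal{H}_{q, d_1}} \sum_{\substack{F_3 \in \mathcal{H}_{q, d_3} \\ (F_3, F_1) = 1}} 1,$$
which is exactly $\mathcal{K}_1$ from Lemma \ref{lemma 1 l=4 K} evaluated at $f = 1$: the coprimality conditions $(F_1, f) = 1$ and $(F_3, F_1 f) = 1$ in the statement of $\mathcal{K}_1$ collapse to just $(F_3, F_1) = 1$.

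Next, I would trace how $J(u_1, u_3)$ specializes at $f = 1$. The product $\prod_{P \mid f}\left(1 + u_1^{d(P)} + u_3^{d(P)}\right)^{-1}$ becomes the empty product, equal to $1$, so
$$J(u_1, u_3) = \prod_{P \in \mathcal{P}_q} \left(1 + u_1^{d(P)} + u_3^{d(P)}\right)\left(1 - u_1^{d(P)}\right)\left(1 - u_3^{d(P)}\right).$$
Setting $u_1 = u_3 = u$ gives $J(u, u) = J_0(u)$, the Euler product defined in the corollary statement. Since $u \mapsto J(u, u)$ and $J_0$ agree as one-variable functions, one automatically has $\tfrac{d}{du_1} J(u_1, u_1) \big|_{u_1 = 1/q} = J_0'(1/q)$.

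Finally, I would substitute these identifications into the asymptotic formula of Lemma \ref{lemma 1 l=4 K} to obtain exactly the expression claimed, with the same error term $O(q^{G/3 + \epsilon g})$. There is no genuine obstacle here, as the entire argument is an algebraic substitution once Lemma \ref{lemma 1 l=4 K} is in hand; the only small point to verify is that the $f$-dependence of $J$ enters solely through the finite product $\prod_{P \mid f}\left(1 + u_1^{d(P)} + u_3^{d(P)}\right)^{-1}$, which is explicit in the definition of $J(u_1, u_3)$ given in the lemma, so it drops out cleanly when $f = 1$.
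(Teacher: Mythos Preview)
Your proposal is correct and matches the paper's approach: the paper states the corollary as an immediate consequence of Lemma \ref{lemma 1 l=4 K} without giving a separate proof, and your argument---specializing $\mathcal{K}_1$ to $f=1$ so that the finite product $\prod_{P\mid f}(\cdot)^{-1}$ becomes empty and $J(u,u)$ reduces to $J_0(u)$---is exactly the intended reading.
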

Now we compute the main term of the one-level density of quartic Dirichlet $L$-functions over function fields. We use some notations as needed given in Section \ref{notations}. Furthermore, by Equation \eqref{equation D(l)}, we have $\displaystyle D(4)-2=\frac{2g}{3}$.
\begin{lemma}
	\label{lemma 2 l=4}
	Recall the main term of the one-level density of zeros in the Kummer setting is given in \eqref{main term K}. Let $\ell=4$ for quartic Dirichlet $L$-functions with characters of genus $g$, we have
	\begin{equation*}
		\begin{split}
			&M^{\mathrm{K}}_{4}\left(\Phi, \frac{2g}{3} \right)
			=\\
			&\frac{3}{g}\sum_{1\le n\le N/4}\hat{\Phi}\left(\frac{6n}{g}\right)\sum_{\substack{Q\in \mathcal{P}_{q,n/r}\\r\ge1}}\frac{d(Q)}{|Q|^{2r}\left(1+2|Q|^{-1}\right)}
			+ \frac{3s_2}{g}\sum_{1\le n\le N/4}\hat{\Phi}\left(\frac{6n}{g}\right)\sum_{\substack{Q\in \mathcal{P}_{q, n/r}\\r\ge1}}\frac{2d(Q)^2|Q|^{-1}}{|Q|^{2r}\left(1+2|Q|^{-1}\right)^2}\\
			&+O\left( 
			q^{-N/4}q^{-2G/3}q^{\epsilon g} 
			\right),
		\end{split}
	\end{equation*} 
	where $s_2$ is an explicit constant defined in \eqref{l=4 constant s_2} and $G=2g/3+1$.
\end{lemma}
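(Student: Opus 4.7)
The plan is to substitute $f = Q^{4r}$ into \eqref{main term K} with $\ell = 4$ and apply Lemma \ref{lemma 1 l=4 K} to $\mathcal{K}_1$, then carefully isolate the $Q$-dependent pieces of the resulting asymptotic. Since $\chi_F(Q^{4r}) = \overline{\chi_F(Q^{4r})} = 1$ whenever $(Q, F) = 1$, the inner character sum collapses to $2\mathcal{K}_1$, where $\mathcal{K}_1$ is the quantity of Lemma \ref{lemma 1 l=4 K} with its $f$ replaced by $Q$ (the product over $P \mid f$ in the definition of $J$ depends only on the prime support of $f$). Using $D(4) - 2 = 2g/3$, this reduces the main term to
$$M^{\mathrm{K}}_4(\Phi, 2g/3) = \frac{3}{g\,|\mathcal{C}_4^{\mathrm{K}}(g)|} \sum_{1 \le n \le N/4} \hat{\Phi}\!\left(\frac{6n}{g}\right) \sum_{\substack{Q \in \mathcal{P}_{q, n/r} \\ r \ge 1}} \frac{d(Q)\,\mathcal{K}_1}{|Q|^{2r}},$$
so the task reduces to inserting the asymptotic for $\mathcal{K}_1$.

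Next I would factor $J(u_1, u_2) = J_0(u_1, u_2)\,D_Q(u_1, u_2)$ where $D_Q(u_1, u_2) = (1 + u_1^{d(Q)} + u_2^{d(Q)})^{-1}$, and expand $\frac{d}{du_1} J(u_1, u_1)|_{1/q}$ by the product rule into a term in which $D_Q$ is undifferentiated and one in which it is differentiated once. The undifferentiated piece factors $D_Q(1/q, 1/q) = (1 + 2|Q|^{-1})^{-1}$ out of the bracket, and the remaining quantity $(G+1) J_0(1/q, 1/q) - q^{-1} \frac{d}{du_1} J_0(u_1, u_1)|_{1/q}$ equals $2\,|\mathcal{C}_4^{\mathrm{K}}(g)|\,q^{-(G+2)}$ up to the error of Lemma \ref{lemma 1 l=4 K} by the corollary following it. The differentiated piece uses $\frac{d}{du_1} D_Q(u_1, u_1)|_{1/q} = -2 d(Q)\,q\,|Q|^{-1}/(1 + 2|Q|^{-1})^2$ and produces the second main-term contribution.

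Dividing $\mathcal{K}_1$ by $|\mathcal{C}_4^{\mathrm{K}}(g)|$ and setting $s_2 := q^{G+2}\,J_0(1/q, 1/q)/(2\,|\mathcal{C}_4^{\mathrm{K}}(g)|)$, which is of order $1/G$ by the corollary, I obtain
$$\frac{\mathcal{K}_1}{|\mathcal{C}_4^{\mathrm{K}}(g)|} = \frac{1}{1 + 2|Q|^{-1}} + s_2 \cdot \frac{2 d(Q)\,|Q|^{-1}}{(1 + 2|Q|^{-1})^2} + O\!\left(\frac{q^{G/3 + \epsilon g}}{|\mathcal{C}_4^{\mathrm{K}}(g)|}\right).$$
Substituting this back into the previous display and folding in the factor $d(Q)/|Q|^{2r}$ yields the two named main-term sums with the correct $d(Q)$ and $d(Q)^2$ weightings.

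Finally, the error comes from the $O(q^{G/3 + \epsilon g})$ residue bound in Lemma \ref{lemma 1 l=4 K} summed against $d(Q)/|Q|^{2r}$ over $(Q, r)$ with $r d(Q) = n$ and then over $n \le N/4$, divided by $g\,|\mathcal{C}_4^{\mathrm{K}}(g)| \asymp g G q^{G+2}$; using $|Q|^{2r} = q^{2n}$ and the Prime Polynomial Theorem for $|\mathcal{P}_{q, n/r}|$ gives the claimed bound $O(q^{-N/4}\,q^{-2G/3}\,q^{\epsilon g})$. The main obstacle is the bookkeeping in the product-rule expansion: one must ensure the undifferentiated part of $\mathcal{K}_1$ reproduces $|\mathcal{C}_4^{\mathrm{K}}(g)|$ precisely, so that $s_2$ is correctly identified and no spurious constant or linear-in-$d(Q)$ terms appear; this is why $J$ and $J_0$ must be tracked in parallel throughout.
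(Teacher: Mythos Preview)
Your proposal is correct and follows essentially the same approach as the paper's proof: reduce the inner character sum to $2\mathcal{K}_1$, factor $J = J_0 D_Q$, apply the product rule to split the asymptotic of Lemma~\ref{lemma 1 l=4 K} into an undifferentiated piece reproducing $|\mathcal{C}_4^{\mathrm{K}}(g)|$ and a differentiated piece defining $s_2$, and then bound the residual error. The only cosmetic difference is that the paper works with the single-variable $D_Q(u) = (1+2u^{d(Q)})^{-1}$ on the diagonal from the outset, whereas you carry the two-variable notation $D_Q(u_1,u_2)$ before specializing; the computations are otherwise identical.
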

\begin{proof}
	Recall that for $\ell=4$, the main term of the one-level density \eqref{main term K} comes from when $f$ is a $4^{th}$ power. Since $\chi_F(f)=\overline{\chi_F(f)}=1$, we have 
	$$M^{\mathrm{K}}_{4}\left(\Phi, \frac{2g}{3} \right)=\frac{3}{g|\mathcal{C}_4^\mathrm{K}(g)|}\sum_{1\le n\le N/4}\hat{\Phi}\left(\frac{6n}{g}\right)\sum_{\substack{Q\in \mathcal{P}_{q, n/r}\\r\ge1}}\frac{d(Q)\mathcal{K}_1}{|Q|^{2r}}.$$
	Let $$D_f(u)=\prod_{\substack{P\in \mathcal{P}_q\\P\mid f}}\left( 1+2u^{d(P)} \right)^{-1},$$
	then
	$$J(u, u)=J_0(u)D_f(u).$$
	Furthermore, if $f=Q^{4r}$ for some monic irreducible polynomial $Q$ and some integer $r$, then 
	$$D_f(u)=D_Q(u)=\left( 1+2u^{d(Q)} \right)^{-1}.$$
	Now using Lemma \ref{lemma 1 l=4 K} for $\mathcal{K}_1$, we can write the sum over $Q$ as two sums 
	$$H_1+H_2+O\left( q^{-n}q^{G/3+\epsilon g} \right)$$
	grouped by the same order of derivation on $D_Q\left(u\right)$. Here $G=2g/3+1$ as given in \eqref{Gv notations}.\\
	We have
	\begin{equation*}
		\begin{split}
			H_1
			=\frac{q^{G+2}}{2}\left[(G+1)J_0\left(\frac{1}{q}\right) - \frac{\frac{d}{du}J_0\left(u\right)\mid_{1/q}}{q} \right]  \sum_{\substack{Q\in \mathcal{P}_{q, n/r}\\r\ge1}}\frac{d(Q)}{|Q|^{2r}\left(1+2|Q|^{-1}\right)},
		\end{split}
	\end{equation*}
	and
	\begin{equation*}
		\begin{split}
			H_2=
			\frac{q^{G+2}J_0(\frac{1}{q})}{2}\sum_{\substack{Q\in \mathcal{P}_{q, n/r}\\r\ge1}}\frac{2d(Q)^2|Q|^{-1}}{|Q|^{2r}\left(1+2|Q|^{-1}\right)^2}.
		\end{split}
	\end{equation*}
	Let $s_i$ denote the constant obtained by the coefficient of $H_i$ divided by  $\left| \mathcal{C}^{\mathrm{K}}_{4}(g) \right|$.\\
	We note that $s_1=1$, and we have
	\begin{equation}
		\label{l=4 constant s_2}
		s_2=\frac{q^{G+2}J_0(\frac{1}{q})}{2\left|\mathcal{C}_4^\mathrm{K}(g) \right|}.
	\end{equation}
	Therefore
	\begin{equation*}
		\begin{split}
			&M^{\mathrm{K}}_{4}\left(\Phi, \frac{2g}{3} \right)
			=\\
			&\frac{3}{g}\sum_{1\le n\le N/4}\hat{\Phi}\left(\frac{6n}{g}\right)\sum_{\substack{Q\in \mathcal{P}_{q,n/r}\\r\ge1}}\frac{d(Q)}{|Q|^{2r}\left(1+2|Q|^{-1}\right)}
			+ \frac{3s_2}{g}\sum_{1\le n\le N/4}\hat{\Phi}\left(\frac{6n}{g}\right)\sum_{\substack{Q\in \mathcal{P}_{q, n/r}\\r\ge1}}\frac{2d(Q)^2|Q|^{-1}}{|Q|^{2r}\left(1+2|Q|^{-1}\right)^2}\\
			&+O\left( 
			q^{-N/4}q^{-2G/3}q^{\epsilon g} 
			\right),
		\end{split}
	\end{equation*} 
	where $s_2$ is of order $1/g$.
\end{proof}
\subsubsection{The Error Term}
Recall that if $f$ is a non-$4^{th}$ power, the error term contribution to the one-level density of quartic Dirichlet $L$-functions over function fields is 
$$E^{\mathrm{K}}_{4}\left(\Phi,\frac{2g}{3}\right)
=
\frac{3}{2g\left|\mathcal{C}^{\mathrm{K}}_{4}(g)\right|}\sum_{1\le n\le N}\hat{\Phi}\left(\frac{3n}{2g}\right)\sum_{\substack{f\in \mathcal{M}_{q,n}\\f \ \text{non-$4^{th}$ powers}}}\frac{\Lambda(f)}{|f|^{1/2}}{\sum_{F\in\mathcal{C}_4^\mathrm{K}(g)}}\left(\chi_F(f)+\overline{\chi_F(f)}\right).$$ 
We prove the following upper bound.
\begin{lemma}
	\label{lemma 3 l=4}
	Let $f\in\mathbb{F}_q[t]$ be a monic polynomial and $\chi_{4}$ as defined in \eqref{equation chi_ell}. Let
	$$\mathcal{K}_{2}
	=
	\sum_{ \substack{\chi \ \text{primitive quartic}\\ \chi^2 \ \text{primitive}\\ \text{genus}(\chi)=g\\ \chi\mid_{\mathbb{F}_q^*}=\chi_4} }\chi(f).$$
	Then 
	\begin{equation*}
		\begin{split}
			\mathcal{K}_{2}
			=
			\sum_{ \substack{d_1+d_3=G \\ d_1+3d_3\equiv 1 \Mod{4}}} \sum_{F_1\in \mathcal{H}_{q,d_1}}\chi_f(F_1) \sum_{ \substack{F_3\in \mathcal{H}_{q,d_3} \\ (F_3,F_1)=1}}\chi_f(F_3)^3
			\ll
			Gq^{G/2}q^{\epsilon d(f)},
		\end{split}
	\end{equation*}
	for $G=2g/3+1$.
	
	The error term for the one-level density of zeros of Dirichlet $L$-functions in the Kummer setting is given in \eqref{error term K}. For $\ell=4$, we have
	$$E^{\mathrm{K}}_{4}\left( \Phi, \frac{2g}{3} \right) 
	\ll
	q^{N/2}q^{-G/2}q^{\epsilon N}.
	$$  
\end{lemma}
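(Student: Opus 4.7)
The plan is to follow the strategy of Lemma \ref{lemma 1 l=3 error term}, replacing the exponent $2$ that arose in the cubic case with the exponent $3$ appropriate to the quartic case under the condition that $\chi^{2}$ remains primitive. First, I apply the quartic reciprocity of Lemma \ref{reciprocity} together with the parametrization in Lemma \ref{lemma K main term} to rewrite $\mathcal{K}_2$ as a sum over pairs of monic coprime squarefree polynomials $(F_1, F_3)$ with $d(F_1) + d(F_3) = G$ and $d_1 + 3d_3 \equiv 1 \pmod{4}$, weighted by $\chi_f(F_1) \chi_f(F_3)^3$. Since $G = 2g/3 + 1$ is odd, the congruence condition reduces to a fixed parity constraint on $d_3$, leaving $O(G)$ admissible pairs.

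Next, I form the generating series
\begin{equation*}
\mathcal{S}(u_1, u_3) = \sum_{F_1 \in \mathcal{H}_q} \chi_f(F_1) u_1^{d(F_1)} \sum_{\substack{F_3 \in \mathcal{H}_q \\ (F_3, F_1) = 1}} \chi_f(F_3)^3 u_3^{d(F_3)}.
\end{equation*}
Expanding as Euler products, absorbing the coprimality condition into the product over primes dividing $F_1$, and using that $\chi_f^6 = \chi_f^2$ since $\chi_f$ has order $4$, I expect the factorization
\begin{equation*}
\mathcal{S}(u_1, u_3) = \frac{\mathcal{L}_q(u_1, \chi_f)\, \mathcal{L}_q(u_3, \chi_f^3)}{\mathcal{L}_q(u_1^2, \chi_f^2)\, \mathcal{L}_q(u_3^2, \chi_f^2)} \prod_{P \in \mathcal{P}_q} \frac{1 + \chi_f(P) u_1^{d(P)} + \chi_f(P)^3 u_3^{d(P)}}{\bigl(1 + \chi_f(P) u_1^{d(P)}\bigr)\bigl(1 + \chi_f(P)^3 u_3^{d(P)}\bigr)},
\end{equation*}
where the residual Euler product is absolutely convergent, and hence uniformly bounded, in the region $|u_1|, |u_3| < q^{-1/2}$.

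Applying Perron's formula (Lemma \ref{lemma Perron's formula}) twice on the contour $|u_1| = |u_3| = q^{-1/2}$, each of $\mathcal{L}_q(u_1, \chi_f)$ and $\mathcal{L}_q(u_3, \chi_f^3)$ is bounded by $q^{\epsilon d(f)}$ via Lemma \ref{Lindelof upper bound}, while the two denominator factors $\mathcal{L}_q(u_i^2, \chi_f^2)$, evaluated at $|u_i^2| = q^{-1}$, are bounded below by $q^{-\epsilon d(f)}$ via Lemma \ref{Lindelof lower bound}. The standard trivial estimate on a contour of radius $q^{-1/2}$ then bounds each double integral by $q^{d_1/2} q^{d_3/2} q^{\epsilon d(f)} = q^{G/2} q^{\epsilon d(f)}$. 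Summing over the $O(G)$ admissible pairs $(d_1, d_3)$ yields $\mathcal{K}_2 \ll G q^{G/2} q^{\epsilon d(f)}$, and the identical argument applied with $\overline{\chi_f}$ in place of $\chi_f$ gives the same bound for $\overline{\mathcal{K}_2}$.

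Finally, to bound $E^{\mathrm{K}}_4(\Phi, 2g/3)$, I substitute these estimates into \eqref{error term K}. Using $\sum_{f \in \mathcal{M}_{q,n}} \Lambda(f)/|f|^{1/2} = q^{n/2}$ from the Prime Polynomial Theorem and summing over $n \le N$ gives $O(q^{N/2})$, which combines with the character-sum bound $O(G q^{G/2} q^{\epsilon N})$ from above. Dividing by $g |\mathcal{C}_4^{\mathrm{K}}(g)| \asymp g G q^{G+2}$ via the corollary to Lemma \ref{lemma 1 l=4 K}, and absorbing the polynomial factors in $G$ and $N$ into $q^{\epsilon N}$, yields $E^{\mathrm{K}}_4 \ll q^{N/2} q^{-G/2} q^{\epsilon N}$ as claimed. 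I do not anticipate major obstacles, since the argument is structurally parallel to the cubic case; the only technical point requiring care is the bookkeeping of the exponent $3$ on the second character and the correct identification $\chi_f^6 = \chi_f^2$ in the denominator $L$-factors, which is what ensures the Lindelöf bounds apply uniformly to both numerator and denominator on the chosen contour.
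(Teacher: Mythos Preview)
Your proposal is correct and follows essentially the same approach as the paper's proof: the same generating series, the same factorization into $\mathcal{L}_q(u_1,\chi_f)\mathcal{L}_q(u_3,\chi_f^3)/\bigl(\mathcal{L}_q(u_1^2,\chi_f^2)\mathcal{L}_q(u_3^2,\chi_f^2)\bigr)$ times a convergent Euler product, the same Perron-plus-Lindel\"of estimate on $|u_i|=q^{-1/2}$, and the same trivial summation over $n\le N$ and division by $|\mathcal{C}_4^{\mathrm K}(g)|$. Your explicit remark that $\chi_f^6=\chi_f^2$ is exactly the point that makes the denominator $L$-factors match the paper's expression.
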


\begin{proof}
	We consider the generating series of $\mathcal{K}_{2}$
	$$\mathcal{S}(u_1, u_3)
	=
	\sum_{F_1\in \mathcal{H}_{q}}\chi_f(F_1)u_1^{d(F_1)} \sum_{ \substack{F_3\in \mathcal{H}_{q} \\ (F_3,F_1)=1}}\chi_f(F_3)^3u_3^{d(F_3)}.$$
	
	First, the sum over $F_3$ can be written as the product
	$$\frac{ \prod_{\substack{P\in \mathcal{P}_q}} \left( 1+\chi_f(P)^3u_3^{d(P)} \right)} {\prod_{P \mid F_1}\left( 1+\chi_f(P)^3u_3^{d(P)} \right)},$$
	thus 
	$$\mathcal{S}(u_1, u_3)=\prod_{\substack{P\in \mathcal{P}_q}} \left( 1+\chi_f(P)^3u_3^{d(P)} \right)
	\sum_{F_1\in \mathcal{H}_{q}}\frac{\chi_f(F_1)u_1^{d(F_1)}}{\prod_{P \mid F_1}\left( 1+\chi_f(P)^3u_3^{d(P)} \right)}.$$
	Writing the sum over $F_1$ as the product as well, we combine the two and obtain
	\begin{equation*}
		\mathcal{S}(u_1, u_3)=
		\prod_{P \in \mathcal{P}_q}\left( 1+\chi_f(P)u_1^{d(P)}+\chi_f(P)^3u_3^{d(P)} \right).
	\end{equation*}
	Thus
	\begin{equation*}
		\begin{split}
			\mathcal{S}(u_1, u_3)
			=
			\frac{\mathcal{L} \left(u_1,\chi_f\right)\mathcal{L} \left(u_3,\chi_f^3\right)} {\mathcal{L}\left(u_1^2, \chi_f^2\right)\mathcal{L}\left(u_3^2,\chi_f^2\right)} 
			\prod_{P \in \mathcal{P}_q}\frac{1+\chi_f(P)u_1^{d(P)}+\chi_f(P)^3u_3^{d(P)}}{\left( 1+\chi_f(P)u_1^{d(P)}\right)\left(1+\chi_f(P)^3u_3^{d(P)} \right)},
		\end{split}
	\end{equation*}
	where the product over $P$ is absolutely convergent for $|u_1|,|u_3|<q^{-1/2}.$
	
	Using Perron's formula,
	\begin{equation*}
		\begin{split}
			\mathcal{K}_{2}
			=
			\sum_{ \substack{d_1+d_3=G \\ d_1+3d_3\equiv 1 \Mod{4} }}\frac{1}{\left(2\pi i\right)^2}\oint_{|u_1|=q^{-1/2}}  \oint_{|u_3|=q^{-1/2}} \frac{\mathcal{S}(u_1,u_3)}{u_1^{d_1}u_3^{d_3}}\frac{du_1}{u_1}\frac{du_3}{u_3}.
		\end{split}
	\end{equation*}
	Then, we use the Lindel\"{o}f hypothesis (Lemma \ref{Lindelof upper bound} and Lemma \ref{Lindelof lower bound}) to obtain a bound for each of the following integrals.\\ 
	For $\beta\in \{1,3\}$, we have
	\begin{equation*}
		\frac{1}{2\pi i}\oint_{|u_\beta|=q^{-1/2}} \frac{\mathcal{L}\left( u_\beta, \chi_f^\beta\right)}{\mathcal{L}\left( u_\beta^2, \chi_f^{2\beta}\right)u_\beta^{d_\beta}}\frac{du_\beta}{u_\beta}
		\ll
		q^{d_\beta/2}q^{\epsilon d(f)},
	\end{equation*}
	thus trivially bounding the outer sum
	$$\mathcal{K}_{2}
	\ll
	Gq^{G/2}q^{\epsilon d(f)}.
	$$
	Since Lemma \ref{Lindelof upper bound} and Lemma \ref{Lindelof lower bound} hold for $\mathcal{L}\left(u_\beta, \overline{\chi_f}^\beta\right)$ and $\mathcal{L}\left(u_\beta^2, \overline{\chi_f}^{2\beta}\right)$ for $\beta\in \{1,3\}$, we have
	$$\overline{\mathcal{K}_2}
	\ll
	Gq^{G/2}q^{\epsilon d(f)}.
	$$
	
	The error term in the Kummer setting is defined as in \eqref{error term K} for $\ell=4$.
	\begin{equation*}
		E^{\mathrm{K}}_{4}\left(\Phi,\frac{2g}{3}\right)
		=
		\frac{3}{2g\left|\mathcal{C}^{\mathrm{K}}_{4}(g)\right|}\sum_{1\le n\le N}\hat{\Phi}\left(\frac{3n}{2g}\right)\sum_{\substack{f\in \mathcal{M}_{q,n}\\f \ \text{non-$4^{th}$ powers}}}\frac{\Lambda(f)}{|f|^{1/2}}\left(\mathcal{K}_2+\overline{\mathcal{K}_2} \right).
	\end{equation*}
	We trivially bound the double sum over $f$ and $n$, then divide by $\left| \mathcal{C}^{\mathrm{K}}_{4}(g) \right|$ to obtain
	\begin{equation*}
		E^{\mathrm{K}}_{4}\left( \Phi, g \right)
		\ll
		q^{N/2}q^{-G/2}q^{\epsilon N}.
	\end{equation*}
\end{proof}
\subsubsection{Proofs of the Kummer setting results for $\ell=4$}
\begin{proof}[Proof of Theorem \ref{Theorem K2}]
	We compute the one-level density of zeros of quartic Dirichlet $L$-functions in the Kummer setting. We use some notations in Section \ref{notations} and recall that $\mathcal{C}_4^\mathrm{K}\left(g\right)$ denotes the family of quartic Dirichlet $L$-functions in the Kummer setting as in \ref{notation K fam}. We have
	\begin{equation}\label{equation 1LD K4}
		\Sigma_4^\mathrm{K}\left(\Phi, g\right)
		=
		\hat{\Phi}(0)-\mathcal{A}_{4}^\mathrm{K}\left(\Phi, \frac{2g}{3}\right),
	\end{equation}
	where
	\begin{equation*}
		\mathcal{A}^{\mathrm{K}}_{4}\left(\Phi, \frac{2g}{3}\right)
		=\frac{3}{2g|\mathcal{C}^{\mathrm{K}}_4(g)|}\sum_{1\le n\le N}\hat{\Phi}\left(\frac{3n}{2g}\right)\sum_{f\in \mathcal{M}_{q,n}}\frac{\Lambda(f)}{|f|^{1/2}}{\sum_{\substack{F\in\mathcal{C}_4^\mathrm{K}\left(g\right)\\ }}}\left[\chi_F(f) + \overline{\chi_F(f)} \right].
	\end{equation*}
	Note that $\Sigma_4^\mathrm{K}\left(\Phi, g\right)$ and $\mathcal{A}^{\mathrm{K}}_{4}\left(\Phi, \frac{2g}{3}\right)$ are given by setting $\ell=4$ in \eqref{1LD K} and \eqref{equation A Kummer} respectively.
	
	Using Lemma \ref{lemma 2 l=4} and Lemma \ref{lemma 3 l=4}, we obtain the following result in \eqref{equation final l=4}. 
	\begin{equation*}
		\begin{split}
			&\Sigma_4^\mathrm{K}\left(\Phi, g\right)
			=
			\hat{\Phi}\left(0\right)\\
			&-
			\frac{3}{g}\sum_{1\le n\le N/4}\hat{\Phi}\left(\frac{6n}{g}\right)\sum_{\substack{Q\in \mathcal{P}_{q,n/r}\\r\ge1}}\frac{d(Q)}{|Q|^{2r}\left(1+2|Q|^{-1}\right)}
			- 
			\frac{3s_2}{g}\sum_{1\le n\le N/4}\hat{\Phi}\left(\frac{6n}{g}\right)\sum_{\substack{Q\in \mathcal{P}_{q, n/r}\\r\ge1}}\frac{2d(Q)^2|Q|^{-1}}{|Q|^{2r}\left(1+2|Q|^{-1}\right)^2}\\
			&+O\left(  q^{N/2}q^{-G/2}q^{\epsilon N}\right).
		\end{split}
	\end{equation*}
	where $s_2$ is an explicit constant defined in \eqref{l=4 constant s_2} and $G=\frac{2g}{3}+1$.
\end{proof}

Using the theorem, we confirm the symmetry type of the family.
\begin{proof} [Proof of corollary 2.4 for $\ell=4$ in the Kummer setting]
	Let $N<2g/3$. Then 
	$$\lim_{g\rightarrow \infty}\Sigma_4^\mathrm{K}\left(\Phi, g\right)
	=\hat{\Phi}(0),
	$$	
	since the double sums over $n$ and $Q$ are $o(1)$ as $g\rightarrow \infty.$
	
	Furthermore, by computing the two integrals, we confirm that 
	$$
	\hat{\Phi}(0)
	=
	\int_{-\infty}^{\infty}\hat{\Phi}(y)\hat{\mathcal{W}}_{U(2g/3)}(y)dy
	=
	\int_{-\infty}^{\infty}\hat{\Phi}(y)\delta_0(y)dy,$$
	where $\mathcal{W}_{U(2g/3)}(y)=\delta_0(y)$ denotes the one-level scaling density of the group of unitary matrices. 
\end{proof}
\noindent
Thus the symmetry type of the family is unitary. This supports Katz and Sarnak's philosophy.
\\


\section{The non-Kummer setting} \label{section 5}
\subsection{The Main Term}
First we compute the main term in $\eqref{main term nK}$ of the one-level density of zeros of Dirichlet $L$-functions for $\ell=3, 4$ and $6$ in the non-Kummer setting.

\begin{lemma}
	\label{lemma main term nK}
	Let $D(\ell)$ be the degree of conductors of primitive order $\ell$ characters over $\mathbb{F}_q[t]$ as seen in \eqref{equation D(l)} for $\ell=3,4$ or 6. For a fixed $f\in \mathcal{M}_{q, n}$, we have 
	\begin{equation*}
		\begin{split}
			\sum_{\substack{F\in \mathcal{H}_{q^2, D\left(\ell \right)/2}\\P|F \Rightarrow P \notin \mathbb{F}_q[t]\\(F,f)=1}}1
			=q^{D\left(\ell\right)}\left(q^{-2}-q^{-4}\right)\frac{E_{0}\left(1/q^2\right)}{E_{f}\left(1/q^2\right)} \prod_{\substack{\pi\in \mathcal{P}_{q^2}\\\pi|f}}\left(1+|\pi|_{q^2}^{-1}\right)^{-1}+O\left(q^{D\left(\ell \right)/2+\epsilon g}\right),
		\end{split}
	\end{equation*}
	where
	\begin{equation}
		\label{equation define E_h(u)}
		E_{h}(u)=\prod_{\substack{P\in \mathcal{P}_{q}\\P\mid  h\\d(P)\equiv 0\Mod 2}}\left(1-\frac{u^{d(P)}}{(1+u^{d(P)/2})^2}\right)
		\prod_{\substack{P\in \mathcal{P}_{q}\\P\mid  h\\d(P)\equiv 1\Mod 2}}\left(1-\frac{u^{d(P)}}{1+u^{d(P)}}\right),
	\end{equation} 
	and $E_{0}(u)$ is the product over all primes in $\mathcal{P}_q$.
\end{lemma}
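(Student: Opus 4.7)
The plan mirrors the generating-series analysis used in the Kummer case (Lemma~\ref{lemma 1 l=3 K}), but carried out over $\mathbb{F}_{q^2}[t]$ rather than $\mathbb{F}_q[t]$, with the extra bookkeeping needed to track how primes of $\mathcal{P}_{q^2}$ sit over $\mathcal{P}_q$. First I would write down the one-variable generating series
$$S(u) \;=\; \sum_{\substack{F \in \mathcal{H}_{q^2} \\ P \mid F \,\Rightarrow\, P \notin \mathbb{F}_q[t] \\ (F,f) = 1}} u^{d(F)} \;=\; \prod_{\substack{\pi \in \mathcal{P}_{q^2} \\ \pi \notin \mathbb{F}_q[t] \\ \pi \nmid f}} \bigl(1 + u^{d(\pi)}\bigr),$$
so that by Perron's formula (Lemma~\ref{lemma Perron's formula}) the quantity to be estimated is the coefficient of $u^{D(\ell)/2}$ in $S(u)$.

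Next I would use the standard classification of primes of $\mathcal{P}_{q^2}$ above $\mathcal{P}_q$: an odd-degree $P \in \mathcal{P}_q$ remains inert in $\mathbb{F}_{q^2}[t]$, contributing a single prime of $\mathcal{P}_q \cap \mathcal{P}_{q^2}$ of the same degree, while an even-degree $P \in \mathcal{P}_q$ splits as $P = \pi\bar{\pi}$ into a conjugate pair of $\mathcal{P}_{q^2} \setminus \mathcal{P}_q$, each of degree $d(P)/2$. In particular, the primes of $\mathcal{P}_{q^2}$ dividing $f \in \mathbb{F}_q[t]$ are the odd-degree $P \mid f$ (once each) together with each conjugate pair above an even-degree $P \mid f$. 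Multiplying and dividing by the full $\mathbb{F}_{q^2}$-Euler product and using $\prod_{\pi \in \mathcal{P}_{q^2}}(1+u^{d(\pi)}) = \mathcal{Z}_{q^2}(u)/\mathcal{Z}_{q^2}(u^2) = (1-q^2u^2)/(1-q^2u)$, I obtain
$$S(u) \;=\; \frac{1-q^2u^2}{1-q^2u} \cdot A_f(u), \qquad A_f(u) \;=\; \prod_{\substack{P \in \mathcal{P}_q \\ d(P) \text{ odd}}} \frac{1}{1+u^{d(P)}} \cdot \prod_{\substack{P \in \mathcal{P}_q,\; P \mid f \\ d(P) \text{ even}}} \frac{1}{(1+u^{d(P)/2})^2}.$$
By the Prime Polynomial Theorem, the first factor of $A_f$ is absolutely convergent in $|u| < q^{-1}$, while the second is a finite product, so $A_f$ is analytic and non-vanishing there; hence $S(u)$ is meromorphic in $|u| < q^{-1}$ with a single simple pole, at $u = 1/q^2$.

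I would then evaluate the coefficient by starting with a contour of radius $r < q^{-2}$ (inside which $S$ is analytic) and shifting to $|u| = q^{-1+\epsilon}$. The residue at $u = 1/q^2$ produces an explicit constant times $q^{D(\ell)} A_f(1/q^2)$, and the integral on the shifted circle is bounded trivially using $|S(u)| \ll q^{\epsilon g}$ there, contributing the stated error $O(q^{D(\ell)/2 + \epsilon g})$. To recognise $A_f(1/q^2)$ as the target $E_0(1/q^2)/E_f(1/q^2) \cdot \prod_{\pi \mid f}(1+|\pi|_{q^2}^{-1})^{-1}$, I would invoke the algebraic identities
$$1 - \frac{u^{d(P)}}{(1+u^{d(P)/2})^2} \;=\; \frac{1+2u^{d(P)/2}}{(1+u^{d(P)/2})^2}, \qquad 1 - \frac{u^{d(P)}}{1+u^{d(P)}} \;=\; \frac{1}{1+u^{d(P)}},$$
which let me translate between factors of $E_h$ and Euler local factors at $P \in \mathcal{P}_q$ of either parity, together with $|\pi|_{q^2} = q^{2d(\pi)} = q^{d(P)}$ for each $\pi$ lying above an even-degree $P$, which converts the finite product over $P \mid f$ with $d(P)$ even into a product over the corresponding $\pi \in \mathcal{P}_{q^2}$.

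The main obstacle I expect is this last algebraic repackaging: the natural expression coming out of the residue is indexed by primes of $\mathcal{P}_q$ split by parity of degree and by whether they divide $f$, whereas the target expression mixes a product over \emph{all} primes of $\mathcal{P}_q$ (through the quotient $E_0/E_f$) with a product over primes of $\mathcal{P}_{q^2}$ dividing $f$. Getting the split-prime double factors to cancel correctly, and in particular keeping track of the odd-degree $P \mid f$ (which appears as a single prime of $\mathcal{P}_{q^2}$, not a conjugate pair) versus the even-degree $P \mid f$ (which appears twice) is where one is most prone to slip up; by contrast, the contour shift and the trivial bound on the shifted integral are routine.
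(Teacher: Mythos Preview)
Your Euler product for $S(u)$ is not the generating series of the family in the lemma. The condition ``$P\mid F\Rightarrow P\notin\mathbb{F}_q[t]$'' in the paper means that $F$ is not divisible by any prime $P\in\mathcal{P}_q$, not merely that the $\mathcal{P}_{q^2}$-prime factors of $F$ avoid the inert primes. For an even-degree $P=\pi\bar\pi\in\mathcal{P}_q$, your product $\prod_{\pi\notin\mathbb{F}_q[t],\,\pi\nmid f}(1+u^{d(\pi)})$ allows $F$ to contain \emph{both} $\pi$ and $\bar\pi$, i.e.\ allows $P\mid F$; the correct local factor at such $P$ (with $P\nmid f$) is $1+2u^{d(P)/2}$, not $(1+u^{d(P)/2})^2$. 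Concretely,
\[
\mathcal{S}(u)\;=\;\prod_{\substack{P\in\mathcal{P}_q\\ d(P)\text{ even},\ P\nmid f}}\bigl(1+2u^{d(P)/2}\bigr),
\]
whereas your $S(u)$ equals $\prod_{\text{even }P\nmid f}(1+u^{d(P)/2})^2$. The two differ by the infinite product $\prod_{\text{even }P\nmid f}\frac{(1+u^{d(P)/2})^2}{1+2u^{d(P)/2}}$, which is a nontrivial constant at $u=1/q^2$; this is exactly why your ``algebraic repackaging'' step cannot succeed---$A_f(1/q^2)$ as you define it is genuinely not equal to $\dfrac{E_0(1/q^2)}{E_f(1/q^2)}\prod_{\pi\mid f}(1+|\pi|_{q^2}^{-1})^{-1}$, and no amount of bookkeeping with parities will close the gap. (Indeed, your identity $1-\frac{u^{d(P)}}{(1+u^{d(P)/2})^2}=\frac{1+2u^{d(P)/2}}{(1+u^{d(P)/2})^2}$ is precisely what produces the missing $(1+2u^{d(P)/2})$ factors in the correct answer.)

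The paper sidesteps this by detecting the condition via M\"obius over $\mathbb{F}_q[t]$: writing $\sum_{D\in\mathbb{F}_q[t],\,D\mid F}\mu(D)$ as the indicator of ``no $P\in\mathcal{P}_q$ divides $F$'', swapping the order of summation, and then evaluating the inner $\mathcal{H}_{q^2}$-sum as $\mathcal{Z}_{q^2}(u)/\mathcal{Z}_{q^2}(u^2)$ times local corrections at $Df$. This automatically produces the $E_0/E_f$ structure. Your direct Euler-product route is salvageable if you replace the split-prime local factor by $1+2u^{d(P)/2}$ (equivalently, impose $(F,\bar F)=1$), after which the contour shift and residue computation go through exactly as you outline.
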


\begin{proof}
	Since
	$$\sum_{\substack{D\in \mathbb{F}_q[t]\\D|F}}\mu(D)=\begin{cases}
		1 &\text{if} \  F \  \text{has no prime divisors in} \ \mathbb{F}_q[t]\\
		0 &\text{otherwise}
	\end{cases},$$
	the generating series for the sum over $F$ can be written as
	\begin{equation}
		\label{equation generating series nK}
		\mathcal{S}(u)
		=\sum_{\substack{F\in \mathcal{H}_{q^2}\\P|F \Rightarrow P \notin \mathbb{F}_q[t]\\(F,f)=1}}u^{d(F)}
		=\sum_{\substack{F\in \mathcal{H}_{q^2}\\(F,f)=1}}u^{d(F)}\sum_{\substack{D\in \mathbb{F}_q[t]\\D\mid F}}\mu(D)
		=\sum_{\substack{D\in \mathbb{F}_q[t]\\(D,f)=1}}\mu(D)u^{d(D)}\sum_{\substack{F\in \mathcal{H}_{q^2}\\(F,Df)=1}}u^{d(F)}.
	\end{equation}
	Writing the inner sum over $F$ as the product over primes,  
	$$\sum_{\substack{F\in \mathcal{H}_{q^2}\\(F,Df)=1}}u^{d(F)}=\prod_{\substack{\pi\in \mathcal{P}_{q^2}\\ \pi\nmid Df}}\left(1+u^{d(\pi)}\right)
	=\frac{Z_{q^2}(u)}{Z_{q^2}(u^2)\prod_{\substack{\pi\in \mathcal{P}_{q^2}\\\pi\mid D f}}\left(1+u^{d(\pi)}\right)}.$$
	Thus $\eqref{equation generating series nK}$ above is
	$$\mathcal{S}(u)=\frac{1-q^2u^2}{(1-q^2u)\prod_{\substack{\pi\in \mathcal{P}_{q^2}\\\pi|  f}}\left(1+u^{d(\pi)}\right)}\sum_{\substack{D\in \mathbb{F}_q[t]\\(D,f)=1}}\frac{\mu(D)u^{d(D)}}{\prod_{\substack{\pi\in \mathcal{P}_{q^2}\\\pi| D}}\left(1+u^{d(\pi)}\right)}.$$
	Similarly, the sum over $D$ can be written as the product
	\begin{equation}
		\label{equation Euler product mod 2}
		\begin{split}
			\sum_{\substack{D\in \mathbb{F}_q[t]\\(D,f)=1}}\frac{\mu(D)u^{d(D)}}{\prod_{\substack{\pi\in \mathcal{P}_{q^2}\\\pi| D}}\left(1+u^{d(\pi)}\right)}
			&=\prod_{\substack{P\in \mathcal{P}_{q}\\P \nmid  f}}\left(1-\frac{u^{d(P)}}{\prod_{\substack{\pi\in \mathcal{P}_{q^2}\\\pi|P}} \left(1+u^{d(\pi)}\right)}\right)    \\
			&=\prod_{\substack{P\in \mathcal{P}_{q}\\P\nmid  f\\d(P) \equiv 0\Mod 2}}\left(1-\frac{u^{d(P)}}{(1+u^{d(P)/2})^2}\right)\prod_{\substack{P\in \mathcal{P}_{q}\\P \nmid  f\\d(P)\equiv 1\Mod2}}\left(1-\frac{u^{d(P)}}{1+u^{d(P)}}\right),
		\end{split}
	\end{equation}
	where the last equality follows from lemma 2.9 in \cite{BSM}.
	We denote this product by $\displaystyle \frac{E_{0}(u)}{E_{f}(u)}$ where $E_h(u)$ is as defined in \eqref{equation define E_h(u)}. 
	
	Note that $\displaystyle \frac{E_{0}(u)}{E_{f}(u)}$ is absolutely convergent for $|u|<1/q$. We can see this by expanding the denominator of the fraction in each product of $\eqref{equation Euler product mod 2}$, where for $j\in \{0,1\}$, each term can be written as 
	$$\displaystyle \prod_{\substack{P\in \mathcal{P}_{q}, \ P \nmid f\\d(P)\equiv j \Mod{2}}} \left(1-u^{d(P)} \ + B\left(u\right)\right)$$ where $B(u)$ contains $u^\alpha$ for $\alpha>d(P)$.\\
	Hence
	$$ \mathcal{S}(u)=\frac{1-q^2u^2}{(1-q^2u)\prod_{\substack{\pi\in \mathcal{P}_{q^2}\\\pi \mid  f}}\left(1+u^{d(\pi)}\right)}\times
	\frac{E_{0}(u)}{E_{f}(u)},
	$$
	which is absolutely convergent for $|u|<1/q^2$.
	
	Using Perron's formula (Lemma $\ref{lemma Perron's formula}$), we first integrate along a circle of radius $|u|=1/q^{2+\epsilon}$ and then shift the contour to $|u|=1/q^{1+\epsilon}$, where we encounter a simple pole at $u=1/q^2$.\\	
	Let $\mathcal{S}(u)(1-q^2u)=F(u)$. For a small circle around the origin, for example, $|u|=1/q^{100}$, we have  
	\begin{equation*}
		\begin{split}
			\sum_{\substack{F\in \mathcal{H}_{q^2, D\left(\ell \right)/2}\\P|F \Rightarrow P \notin \mathbb{F}_q[t]\\(F,f)=1}}1
			=&
			\frac{1}{2\pi i}\oint_{|u|=1/q^{100}} \frac{F(u)}{u^{D\left(\ell \right)/2}(1-q^2u)}\frac{du}{u} \\
			=&
			-\text{Res}\left(u=1/q^2\right)+\frac{1}{2\pi i}\oint_{|u|=1/q^{1+\epsilon}} \frac{F(u)}{u^{D\left(\ell \right)/2}(1-q^2u)}\frac{du}{u}.
		\end{split}
	\end{equation*}
	We bound the integral and compute the residue to obtain
	\begin{equation*}
		\begin{split}
			\sum_{\substack{F\in \mathcal{H}_{q^{2}, D\left(\ell \right)/2}\\P|F \Rightarrow P \notin \mathbb{F}_q[t]\\(F,f)=1}}1
			=&-\lim_{u\rightarrow1/q^{2}}\frac{F(u)(u-1/q^{2})}{u^{D\left(\ell \right)/2}(1-q^{2}u)}+O\left(q^{D\left(\ell \right)/2+\epsilon D\left(\ell \right)/2}\right)\\
			=&\frac{F(1/q^{2})q^{D(\ell)}}{q^{2}}+O\left(q^{D\left(\ell \right)/2+\epsilon g}\right)\\
			=& q^{D(\ell)}\left(q^{-2}-q^{-4}\right)\frac{E_{0}(1/q^{2})}{E_{f}(1/q^{2})}\prod_{\substack{\pi\in \mathcal{P}_{q^2}\\\pi \mid f}}\left(1+|\pi|_{q^2}^{-1}\right)^{-1}+O\left(q^{D\left(\ell \right)/2+\epsilon g}\right).
		\end{split}
	\end{equation*}
	This gives the result stated in Lemma \ref{lemma main term nK}.
\end{proof}


Now we compute the size of the family $|\mathcal{C}^{\mathrm{nK}}_\ell(g)|$ in the non-Kummer setting. The proof is similar to the previous lemma.
\begin{corollary} For $\ell=3, 4$ and 6, the size of the family of Dirichlet $L$-functions of order $\ell$ is 
	\begin{equation*}
		|\mathcal{C}^{\mathrm{nK}}_\ell(g)|
		=q^{D(\ell)}\left(q^{-2}-q^{-4}\right) E_{0}(1/q^{2})+O\left(q^{D\left(\ell \right)/2+\epsilon g}\right)
	\end{equation*}
\end{corollary}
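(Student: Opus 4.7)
The plan is to mirror the argument of Lemma \ref{lemma main term nK} with $f=1$, since the family size is exactly the unrestricted count
\[
|\mathcal{C}^{\mathrm{nK}}_\ell(g)| = \sum_{\substack{F\in \mathcal{H}_{q^2, D(\ell)/2}\\P\mid F \Rightarrow P \notin \mathbb{F}_q[t]}} 1.
\]
In particular, there is no coprimality condition to carry along, so the local factor $E_f(1/q^2)^{-1}\prod_{\pi\mid f}(1+|\pi|_{q^2}^{-1})^{-1}$ simply collapses to $1$.

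First I would detect the condition ``$P\mid F\Rightarrow P\notin \mathbb{F}_q[t]$'' by the M\"obius identity $\sum_{D\in\mathbb{F}_q[t],\,D\mid F}\mu(D)\in\{0,1\}$, rearrange the resulting double sum, and peel off the inner generating series over squarefree $F\in\mathcal{H}_{q^2}$ with $(F,D)=1$ as a ratio of zeta functions times a finite Euler product on primes dividing $D$. This yields
\[
\mathcal{S}(u) = \sum_{\substack{F\in\mathcal{H}_{q^2}\\P\mid F\Rightarrow P\notin\mathbb{F}_q[t]}} u^{d(F)} = \frac{1-q^2 u^2}{1-q^2 u}\,E_0(u),
\]
with $E_0(u)$ as in \eqref{equation define E_h(u)} (the specialization of the two-factor product to all primes of $\mathbb{F}_q[t]$, obtained exactly as in \eqref{equation Euler product mod 2} via Lemma 2.9 of \cite{BSM}). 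The factor $E_0(u)$ is absolutely convergent for $|u|<1/q$.

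Next I would apply Perron's formula (Lemma \ref{lemma Perron's formula}) and extract the coefficient of $u^{D(\ell)/2}$:
\[
|\mathcal{C}^{\mathrm{nK}}_\ell(g)| = \frac{1}{2\pi i}\oint_{|u|=q^{-2-\epsilon}} \frac{\mathcal{S}(u)}{u^{D(\ell)/2}}\,\frac{du}{u}.
\]
I would then shift the contour outward to $|u|=q^{-1-\epsilon}$, picking up a single simple pole at $u=1/q^2$ coming from $(1-q^2 u)^{-1}$ (the factor $1-q^2u^2$ is regular, and $E_0(u)$ is analytic in the enlarged disc). The residue contributes
\[
\lim_{u\to 1/q^2} \frac{(1-q^2 u^2)E_0(u)(1-q^2 u/q^2)^{-1}}{u^{D(\ell)/2}}\cdot\text{(sign)} = q^{D(\ell)}(q^{-2}-q^{-4})E_0(1/q^2),
\]
while on the shifted circle the integrand is $O(q^{D(\ell)/2}q^{\epsilon g})$ using absolute convergence of $E_0$ on $|u|=q^{-1-\epsilon}$.

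Nothing in this argument is really an obstacle, since every analytic input (the Euler product manipulation, the location of the pole, the contour shift, and the trivial bound on the shifted integral) is already carried out in Lemma \ref{lemma main term nK}; the only thing to verify is that setting $f=1$ kills the auxiliary factors $E_f(1/q^2)^{-1}\prod_{\pi\mid f}(1+|\pi|_{q^2}^{-1})^{-1}$, which is immediate from empty products. The only small care point is checking that the shifted-contour bound stays of order $q^{D(\ell)/2+\epsilon g}$; this follows because on $|u|=q^{-1-\epsilon}$ we have $|\mathcal{S}(u)|\ll q^{\epsilon g}$ and $|u|^{-D(\ell)/2}=q^{(1+\epsilon)D(\ell)/2}$, and $D(\ell)\asymp g$.
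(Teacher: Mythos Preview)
Your proposal is correct and follows essentially the same route as the paper: both detect the ``no $\mathbb{F}_q[t]$-prime divisor'' condition via M\"obius, obtain the identical generating series $\mathcal{S}(u)=\frac{(1-q^2u^2)E_0(u)}{1-q^2u}$, apply Perron's formula, shift the contour to $|u|=q^{-1-\epsilon}$ picking up the simple pole at $u=1/q^2$, and bound the remaining integral. The only cosmetic difference is that the paper rewrites the computation from scratch rather than explicitly invoking the $f=1$ specialization of Lemma~\ref{lemma main term nK}, but the content is the same.
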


\begin{proof}
	The number of primitive characters of order $\ell$ with conductor of degree $D(\ell)$ is given by
	\begin{equation*}
		|\mathcal{C}^{\mathrm{nK}}_\ell(g)|=\sum_{\substack{F\in \mathcal{H}_{q^2, D\left(\ell \right)/2}\\P|F \Rightarrow P \notin \mathbb{F}_q[t]}}1.
	\end{equation*}
	Using the same method as in Lemma \ref{lemma main term nK},
	its generating series can be written as
	$$\mathcal{S}(u)=\sum_{\substack{F\in \mathcal{H}_{q^2}\\P|F \Rightarrow P \notin \mathbb{F}_q[t]}}u^{d(F)}=\sum_{\substack{F\in \mathcal{H}_{q^2}}}u^{d(F)}\sum_{\substack{D\in \mathbb{F}_q[t]\\D|F}}\mu(D)
	=\sum_{\substack{D\in \mathbb{F}_q[t]}}\mu(D)u^{d(D)}\sum_{\substack{F\in \mathcal{H}_{q^2}\\(F,D)=1}}u^{d(F)}.$$
	The sum over $F$ is 
	$$\sum_{\substack{F\in \mathcal{H}_{q^2}\\(F,D)=1}}u^{d(F)}
	=\prod_{\substack{\pi\in \mathcal{P}_{q^2}\\\pi\nmid  D }}\left(1+u^{d(\pi)}\right)
	=\frac{Z_{q^2}(u)}{Z_{q^2}(u^2)\prod_{\substack{\pi\in \mathcal{P}_{q^2}\\\pi| D}}\left(1+u^{d(\pi)}\right)},$$
	so we can rewrite $\mathcal{S}(u)$ as
	$$\mathcal{S}(u)
	=
	\frac{1-q^2u^2}{1-q^2u}\sum_{\substack{D\in \mathbb{F}_q[t]}}\frac{\mu(D)u^{d(D)}}{\prod_{\substack{\pi\in \mathcal{P}_{q^2}\\\pi| D}}\left(1+u^{d(\pi)}\right)}.$$
	Now the sum over $D$ can be written as the product
	\begin{equation*}
		\begin{split}
			\sum_{\substack{D\in \mathbb{F}_q[t]}}\frac{\mu(D)u^{d(D)}}{\prod_{\substack{\pi\in \mathcal{P}_{q^2}\\\pi| D}}\left(1+u^{d(\pi)}\right)}
			&=\prod_{\substack{P\in \mathcal{P}_{q}}}\left(1-\frac{u^{d(P)}}{\prod_{\substack{\pi\in \mathcal{P}_{q^2}\\\pi|P}} \left(1+u^{d(\pi)}\right)}\right)    \\
			&=\prod_{\substack{P\in \mathcal{P}_{q}\\d(P) \equiv 0\Mod 2}}\left(1-\frac{u^{d(P)}}{(1+u^{d(P)/2})^2}\right)\prod_{\substack{P\in \mathcal{P}_{q}\\d(P)\equiv 1\Mod2}}\left(1-\frac{u^{d(P)}}{1+u^{d(P)}}\right),
		\end{split}
	\end{equation*}
	which we denote by $\displaystyle E_{0}(u)$ as in Lemma \ref{lemma main term nK}. Note that $E_{0}(u)$ is absolutely convergent for $|u|<1/q$ as in the same lemma. 
	Thus 
	$$ \mathcal{S}(u)=\frac{\left(1-q^2u^2\right)E_{0}(u)}{1-q^2u},
	$$
	which is absolutely convergent for $|u|<1/q^2$.
	
	Using Perron's formula, we first integrate along a circle of radius $|u|=1/q^{2+\epsilon}$, then we shift the contour to $|u|=1/q^{1+\epsilon}$ and encounter a simple pole at $u=1/q^2$.\\
	Let $\mathcal{S}(u)(1-q^2u)=F(u)$. We have
	\begin{equation*}
		\begin{split}
			&\sum_{\substack{F\in \mathcal{H}_{q^2, D\left(\ell \right)/2}\\P|F \Rightarrow P \notin \mathbb{F}_q[t]}}1
			=\frac{1}{2\pi i}\oint_{|u|=1/q^{100}} \frac{F(u)}{u^{D\left(\ell \right)/2}(1-q^2u)}\frac{du}{u} \\
			&=-\text{Res}\left(u=1/q^2\right)+\frac{1}{2\pi i}\oint_{|u|=1/q^{1+\epsilon}} \frac{F(u)}{u^{D\left(\ell \right)/2}(1-q^2u)}\frac{du}{u}.
		\end{split}
	\end{equation*}
	Bounding the integral and computing the residue we obtain
	\begin{equation*}
		\begin{split}
			\left|\mathcal{C}_\ell^{\mathrm{nK}}(g)\right|
			=
			\sum_{\substack{F\in \mathcal{H}_{q^{2}, D\left(\ell \right)/2}\\P|F \Rightarrow P \notin \mathbb{F}_q[t]}}1
			=&-\lim_{u\rightarrow1/q^{2}}\frac{F(u)(u-1/q^{2})}{u^{D\left(\ell \right)/2}(1-q^{2}u)}+O\left(q^{D\left(\ell \right)/2+\epsilon D\left(\ell \right)/2}\right)\\
			=&\frac{F(1/q^{2})q^{D(\ell)}}{q^{2}}+O\left(q^{D\left(\ell \right)/2+\epsilon g}\right)\\
			=& q^{D(\ell)}\left(q^{-2}-q^{-4}\right)E_{0}\left(1/q^{2}\right)+O\left(q^{D\left(\ell \right)/2+\epsilon g}\right).
		\end{split}
	\end{equation*}
\end{proof}
Using our previous results, we compute the main term $M^{\mathrm{nK}}_{\ell}(\Phi, D\left(\ell\right)-2)$ given in \eqref{main term nK}. We recall some notations in Section \ref{notations} as needed.
\begin{lemma}
	\label{lemma main main term nK}
	Let $m_Q=\gcd\left(2, d(Q)\right)$. The main term of the one-level density of zeros of Dirichlet $L$-functions with order $\ell$ characters for $\ell=3,4$ and $6$ in the non-Kummer setting as given in \eqref{main term nK} is
	\begin{equation*}
		\begin{split}
			&M_\ell^{\mathrm{nK}}\left(\Phi, D\left(\ell\right)-2\right)=\\
			&\frac{\ell-1}{g}
			\sum_{1\le n\le N/\ell}\hat{\Phi}\left(\frac{\ell(\ell-1)n}{2g}\right) \sum_{\substack{Q\in \mathcal{P}_{q, n/r}\\r \ge 1}}\frac{d(Q)}{|Q|^{\ell r/2}\left(1+|Q|^{-{2/m_Q}}\right)^{m_Q}}
			+O\left(q^{{-D\left(\ell \right)/2}+\epsilon g}\right).
		\end{split}	
	\end{equation*}
\end{lemma}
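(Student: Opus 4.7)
The plan is to substitute the $\ell^{\text{th}}$-power values of the characters into the expression \eqref{main term nK}, apply the counting identity of Lemma \ref{lemma main term nK} together with the size estimate for $|\mathcal{C}_\ell^{\mathrm{nK}}(g)|$ from the preceding corollary, and then simplify the per-$Q$ local factors using the splitting of $Q$ in $\mathbb{F}_{q^2}[t]$. I first observe that since $\chi_F$ has order dividing $\ell$, whenever $(F,Q)=1$ in $\mathbb{F}_{q^2}[t]$ one has $\chi_F(Q^{\ell r})=\overline{\chi_F(Q^{\ell r})}=1$, and both summands vanish otherwise; hence the inner character sum in \eqref{main term nK} reduces to $2\cdot\#\{F:(F,Q)=1\}$, which is precisely the quantity produced by Lemma \ref{lemma main term nK} with $f=Q$ (since $F$ is squarefree in $\mathbb{F}_{q^2}[t]$, coprimality to $Q^{\ell r}$ is equivalent to coprimality to $Q$).

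Dividing this count by the main term $q^{D(\ell)}(q^{-2}-q^{-4})E_0(1/q^2)$ of $|\mathcal{C}_\ell^{\mathrm{nK}}(g)|$ makes the common global factor cancel, leaving a per-$Q$ ratio of $E_Q(1/q^2)^{-1}\prod_{\pi\mid Q}(1+|\pi|_{q^2}^{-1})^{-1}$ at leading order. The prefactor $2/(D(\ell)-2)$ collapses to $(\ell-1)/g$ via $D(\ell)-2=2g/(\ell-1)$, and this same identity turns the argument of $\hat\Phi$ into $\ell(\ell-1)n/(2g)$. For the local simplification at $Q$: if $d(Q)$ is odd then $m_Q=1$ and $Q$ stays prime in $\mathbb{F}_{q^2}[t]$ with $|\pi|_{q^2}=|Q|_q^2$, whereas if $d(Q)$ is even then $m_Q=2$ and $Q=\pi_1\pi_2$ with $|\pi_i|_{q^2}=|Q|_q$. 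In both cases $\prod_{\pi\mid Q}(1+|\pi|_{q^2}^{-1})=(1+|Q|_q^{-2/m_Q})^{m_Q}$, and combining this with the explicit evaluation of $E_Q(1/q^2)$ given by \eqref{equation define E_h(u)} yields the claimed denominator factor $(1+|Q|_q^{-2/m_Q})^{m_Q}$.

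The main obstacle is the uniform control of the error term. The residual $O(q^{D(\ell)/2+\epsilon g})$ from Lemma \ref{lemma main term nK}, once divided by $|\mathcal{C}_\ell^{\mathrm{nK}}(g)|\asymp q^{D(\ell)}$, contributes $O(q^{-D(\ell)/2+\epsilon g})$ per pair $(Q,r)$. Weighting by $d(Q)/|Q|_q^{\ell r/2}$ and applying the Prime Polynomial Theorem, the sum over $Q\in\mathcal{P}_{q,n/r}$ of degree $m=n/r$ is bounded by $q^{m(1-\ell r/2)}\le 1$ since $\ell r\geq 3$, so the cumulative sum over $n\leq N/\ell$ and $r\geq 1$ is $O(1)$. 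Combined with the overall prefactor $(\ell-1)/g$ this gives a total error of size $O(q^{-D(\ell)/2+\epsilon g})$, matching the statement.
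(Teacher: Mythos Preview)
Your proposal is correct and follows the same route as the paper: reduce the inner character sum to the coprime count, invoke Lemma~\ref{lemma main term nK} together with the corollary for $|\mathcal{C}_\ell^{\mathrm{nK}}(g)|$ so that the global factor $q^{D(\ell)}(q^{-2}-q^{-4})E_0(1/q^2)$ cancels, identify $\prod_{\pi\mid Q}(1+|\pi|_{q^2}^{-1})=(1+|Q|_q^{-2/m_Q})^{m_Q}$ via the splitting behaviour of $Q$ in $\mathbb{F}_{q^2}[t]$, and control the residual error by summing the per-$Q$ contributions. You are in fact somewhat more explicit than the paper about the coprimality condition $(F,Q)=1$ (the paper simply writes $\chi_F(Q^{\ell r})=1$ and passes directly to Lemma~\ref{lemma main term nK}) and about the uniform error bookkeeping over $n$, $r$, and $Q$.
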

\begin{proof}
	Recall that we use $\mathcal{C}_\ell^\mathrm{nK}\left(g\right)$ as in \ref{notation nK fam} to denote the family of order $\ell$ Dirichlet $L$-functions in the non-Kummer setting. In this case, the number of non-trivial zeros is $\displaystyle D\left(\ell\right)-2=\frac{2g}{\ell-1}$. Thus the main term, which comes from the $\ell^{th}$ power polynomials, is $M_\ell^{\mathrm{nK}}\left(\Phi, \frac{2g}{\ell-1}\right)$ as follows.
	\begin{equation}
		\label{equation of main term lemma}
		\begin{split}
			M^{\mathrm{nK}}_{\ell}\left(\Phi, \frac{2g}{\ell-1}\right)=
			\frac{\ell-1}{2g|\mathcal{C}_\ell^\mathrm{nK}(g)|}\sum_{1\le n\le N/\ell}\hat{\Phi}\left(\frac{\ell(\ell-1)n}{2g}\right)\sum_{\substack{Q\in \mathcal{P}_{q, n/r}\\r\ge 1}}\frac{d(Q)}{|Q|^{\ell r/2}}\sum_{\substack{F\in \mathcal{H}_{q^{2}, D\left(\ell \right)/2}\\P|F \Rightarrow P \notin \mathbb{F}_q[t]}}\chi_F(Q^{\ell r})+\overline{\chi_F(Q^{\ell r})}.
		\end{split}
	\end{equation}
	Since $\chi_F(Q^{\ell r})=\overline{\chi_F(Q^{\ell r})}=1$, we have
	\begin{equation}
		\label{equation of main term lemma}
		\begin{split}
			M^{\mathrm{nK}}_{\ell}\left(\Phi, \frac{2g}{\ell-1}\right)=
			\frac{\ell-1}{g|\mathcal{C}_\ell^\mathrm{nK}(g)|}\sum_{1\le n\le N/\ell}\hat{\Phi}\left(\frac{\ell(\ell-1)n}{2g}\right)\sum_{\substack{Q\in \mathcal{P}_{q, n/r}\\r\ge 1}}\frac{d(Q)}{|Q|^{\ell r/2}}\sum_{\substack{F\in \mathcal{H}_{q^{2}, D\left(\ell \right)/2}\\P|F \Rightarrow P \notin \mathbb{F}_q[t]}}1.
		\end{split}
	\end{equation}
	Using Lemma \ref{lemma main term nK}, the sum over $Q$ can be written as
	\begin{equation}
		\label{equation sum over Q}
		\begin{split}
			q^{D(\ell)}\left(q^{-2}-q^{-4}\right)		 
			\sum_{\substack{Q\in \mathcal{P}_{q, n/r}\\r\ge 1}}\frac{d(Q)E_{0}(1/q^{2})}{|Q|^{\ell r/2}E_{f}(1/q^{2})}\prod_{\substack{\pi\in \mathbb{F}_{q^{2}}[t]\\\pi|Q}}\left(1+|\pi|_{q^{2}}^{-1}\right)^{-1}
			+O\left({q^{{D\left(\ell \right)/2}+\epsilon g}}\right)\\
			=
			q^{D(\ell)}\left(q^{-2}-q^{-4}\right)		 
			\sum_{\substack{Q\in \mathcal{P}_{q, n/r}\\r \ge 1}}\frac{d(Q)E_{0}(1/q^{2})}{|Q|^{\ell r/2}E_{f}(1/q^{2})\left(1+|Q|^{-{2/m_Q}}\right)^{m_Q}}
			+O\left({q^{{D\left(\ell \right)/2}+\epsilon g}}\right).
		\end{split}
	\end{equation}
	We divide \eqref{equation sum over Q} by $|\mathcal{C}_\ell^\mathrm{nK}(g)|$ and obtain	
	\begin{equation*}
		\sum_{\substack{Q\in \mathcal{P}_{q, n/r}\\r \ge 1}}\frac{d(Q)}{|Q|^{\ell r/2}\left(1+|Q|^{-{2/m_Q}}\right)^{m_Q}}+
		O\left(\frac{Nq^{{-D\left(\ell \right)/2}+\epsilon g}}{g}\right).
	\end{equation*}
	Note here that the error term from dividing the size of the family is of the same size as the error term above.
	Now using the equation above and \eqref{equation of main term lemma} gives the stated result.
\end{proof}
\subsection{The Error Term}
Recall that the error term in \eqref{error term nK} comes from the non-$\ell^{th}$ power contributions. 
To compute the error term we first prove the following lemma.
\begin{lemma}
	\label{lemma 1 nK error term}
	Let $f$ be a monic non-$\ell^{th}$ power polynomial in $\mathbb{F}_q[t]$. Then
	$$
	\sum_{\substack{F\in \mathcal{H}_{q^{2}, D\left(\ell \right)/2 }\\P|F \Rightarrow P \notin \mathbb{F}_q[t]}}\chi_F(f) 
	\ll q^{D\left(\ell \right)/2}q^{{}\epsilon \left( d(f)+ D\left(\ell \right) \right)}.
	$$
\end{lemma}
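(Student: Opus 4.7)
The plan is to apply reciprocity to move from $\chi_F(f)$ to $\chi_f(F)$, recognize the restricted sum as a character sum over squarefree polynomials in $\mathbb{F}_{q^2}[t]$ whose generating series factors through $L$-functions, and then extract the coefficient via Perron's formula on a contour where the Lindel\"of estimates (Lemmas \ref{Lindelof upper bound} and \ref{Lindelof lower bound}) control all the remaining factors.

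Concretely, by Lemma \ref{reciprocity} applied over $\mathbb{F}_{q^2}[t]$, $\chi_F(f)=\chi_f(F)$, where $\chi_f$ is now an order $\ell$ character modulo $f$ on $\mathbb{F}_{q^2}[t]$. The constraint $P\mid F\Rightarrow P\notin\mathbb{F}_q[t]$ together with squarefreeness of $F$ forces every prime factor $\pi\mid F$ to be a split (non-inert) prime of $\mathbb{F}_{q^2}[t]$, so the full generating series becomes an Euler product over split primes:
\begin{equation*}
\mathcal{S}(u):=\sum_{\substack{F\in\mathcal{H}_{q^2}\\P\mid F\Rightarrow P\notin\mathbb{F}_q[t]}}\chi_f(F)u^{d(F)}=\prod_{\substack{\pi\in\mathcal{P}_{q^2}\\\pi\notin\mathcal{P}_q}}\bigl(1+\chi_f(\pi)u^{d(\pi)}\bigr)=\frac{\mathcal{L}_{q^2}(u,\chi_f)}{\mathcal{L}_{q^2}(u^2,\chi_f^2)\,\prod_{\substack{P\in\mathcal{P}_q\\d(P)\text{ odd}}}\bigl(1+\chi_f(P)u^{d(P)}\bigr)},
\end{equation*}
using that $\mathcal{P}_q\cap\mathcal{P}_{q^2}$ consists precisely of the $\mathbb{F}_q$-primes of odd degree. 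Perron's formula (Lemma \ref{lemma Perron's formula}) then expresses the desired sum as the coefficient of $u^{D(\ell)/2}$ in $\mathcal{S}(u)$, i.e. as a contour integral over $|u|=r$.

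I would take $r=q^{-1-\epsilon}$. This radius is strictly smaller than $1/q$, which keeps us inside the region of absolute convergence of the partial product over odd-degree primes (so that product is bounded above and below by absolute constants independent of $f$), and strictly smaller than $q^{-1/2}$, which is where the zeros of $\mathcal{L}_{q^2}(u^2,\chi_f^2)$ lie; in particular no residues are picked up. On this circle Lemmas \ref{Lindelof upper bound} and \ref{Lindelof lower bound} give $|\mathcal{L}_{q^2}(u,\chi_f)|\ll q^{\epsilon d(f)}$ and $|\mathcal{L}_{q^2}(u^2,\chi_f^2)|^{-1}\ll q^{\epsilon d(f)}$, whence $|\mathcal{S}(u)|\ll q^{\epsilon d(f)}$ on $|u|=r$, and a trivial estimate of the integral yields
\begin{equation*}
\Bigl|\sum_{\substack{F\in\mathcal{H}_{q^2,D(\ell)/2}\\P\mid F\Rightarrow P\notin\mathbb{F}_q[t]}}\chi_F(f)\Bigr|\ll q^{\epsilon d(f)}\cdot r^{-D(\ell)/2}=q^{D(\ell)/2}q^{\epsilon(d(f)+D(\ell))},
\end{equation*}
as required.

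The main technical nuisance will be verifying the Lindel\"of bounds when $\chi_f$ is induced from a smaller conductor $f^*\mid f$, or when $\chi_f^2$ is principal. In the imprimitive case the standard relation $\mathcal{L}_{q^2}(u,\chi_f)=\mathcal{L}_{q^2}(u,\chi_{f^*})\prod_{\pi\mid f/f^*}(1-\chi_{f^*}(\pi)u^{d(\pi)})$ lets us replace $d(f^*)$ by $d(f)$ at no cost. If $\chi_f^2$ is principal, $\mathcal{L}_{q^2}(u^2,\chi_f^2)$ reduces to a zeta-type factor whose only pole sits at $u=\pm 1/q$, outside our disk $|u|\le q^{-1-\epsilon}$, so the reciprocal remains harmlessly bounded. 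These edge cases need only brief direct verification and introduce no new ideas.
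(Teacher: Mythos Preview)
Your approach differs from the paper's: rather than detecting the constraint by M\"obius inversion over $D\in\mathbb{F}_q[t]$ and then applying Perron separately for each $D$ before summing trivially over $D$ (which is what the paper does), you write the constrained sum directly as a single Euler product and apply Perron once on a slightly smaller circle. This is a legitimate alternative and, when executed correctly, is arguably tidier---there is no outer sum to recombine.

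However, you have misread the constraint. In this paper ``$P\mid F\Rightarrow P\notin\mathbb{F}_q[t]$'' means $P\nmid F$ for every prime $P\in\mathcal{P}_q$; this is exactly what the M\"obius sum $\sum_{D\in\mathbb{F}_q[t],\,D\mid F}\mu(D)$ (used both here and in Lemma~\ref{lemma main term nK}) detects. That excludes not only the inert primes but also any $F$ divisible by \emph{both} halves $\pi_1,\pi_2$ of a split prime $P=\pi_1\pi_2$. Hence the correct local factor at a split $P$ is
\[
1+\chi_f(\pi_1)u^{d(P)/2}+\chi_f(\pi_2)u^{d(P)/2},
\]
not your $(1+\chi_f(\pi_1)u^{d(P)/2})(1+\chi_f(\pi_2)u^{d(P)/2})$. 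Fortunately the ratio of the two is $1-\chi_f(\pi_1)\chi_f(\pi_2)u^{d(P)}\big/\bigl((1+\chi_f(\pi_1)u^{d(P)/2})(1+\chi_f(\pi_2)u^{d(P)/2})\bigr)$, and since $d(P)\ge 2$ this gives, on $|u|=q^{-1-\epsilon}$, an absolutely convergent product bounded above and below independently of $f$---by the same estimate you already used for the odd-degree prime product. With this correction your argument goes through unchanged and delivers the stated bound.
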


\begin{proof}
	We first write the sum over $F$ as
	\begin{equation*}
		\begin{split}
			\sum_{\substack{F\in \mathcal{H}_{q^{2}, D\left(\ell \right)/2}\\P|F \Rightarrow P \notin \mathbb{F}_q[t]}}\chi_F(f)
			&=
			\sum_{\substack{F\in \mathcal{H}_{q^{2}, D\left(\ell \right)/2}}}\chi_F(f)\sum_{\substack{D\in \mathbb{F}_q[t]\\D|F}}\mu(D)\\
			&=
			\sum_{\substack{D\in \mathbb{F}_q[t]\\d(D)\le D\left(\ell \right)/2\\(D,f)=1}}\mu(D)\sum_{\substack{F\in \mathcal{H}_{q^{2}, D\left(\ell \right)/2-d(D)}\\(F,D)=1}}\chi_F(f),
		\end{split}
	\end{equation*}
	where the inner sum over $F$ has the generating series
	$$\sum_{\substack{F\in \mathcal{H}_{q^{2}}}}\chi_F(f)u^{d(F)}
	=
	\prod_{\substack{\pi\in \mathcal{P}_{q^2}\\\pi \nmid Df}}1+\chi_{\pi}(f)u^{d(\pi)}
	=
	\frac{\mathcal{L}_{q^{2}}(u, \chi_{f})}{\mathcal{L}_{q^{2}}(u^2, \chi_{f}^2)}
	\prod_{\substack{\pi\in \mathcal{P}_{q^2}\\ \pi\mid D\\ \pi\nmid  f}}\frac{1-\chi_\pi(f)u^{d(\pi)}}{1-\chi_\pi^2(f)u^{2d(\pi)}}.$$
	We evaluate the inner sum above using Perron's formula. Thus
	$$\sum_{\substack{F\in \mathcal{H}_{q^{2}, D\left(\ell \right)/2-d(D)}\\(F,D)=1}}\chi_F(f)=\frac{1}{2\pi i}\oint\frac{\mathcal{L}_{q^{2}}(u, \chi_{f})}{\mathcal{L}_{q^{2}}(u^2, \chi_{f}^2)u^{D\left(\ell \right)/2-d(D)}}
	\prod_{\substack{\pi\in \mathcal{P}_{q^{2}}\\\pi\mid D\\ \pi\nmid  f}}\frac{1-\chi_{\pi}(f)u^{d(\pi)}}{1-\chi_{\pi}^2(f)u^{2d({\pi})}}\frac{du}{u},$$
	where we integrate along a circle of radius $|u|=q^{-{1}}$ around the origin. 
	
	The Lindel\"{o}f bound in Lemma \ref{Lindelof upper bound} for the $L$-function in the numerator gives
	$$|\mathcal{L}_{q^{2}}(u, \chi_{f})|\ll q^{2\epsilon d(f)},$$ 
	and the lower bound in Lemma \ref{Lindelof lower bound} for the $L$-function in the denominator gives
	$$|\mathcal{L}_{q^{2}}(u^2, \chi_{f}^2)|\gg q^{-2\epsilon d(f)}.$$ 
	Hence
	$$\sum_{\substack{F\in \mathcal{H}_{q^{2}, D\left(\ell \right)/2-d(D)}\\(F,D)=1}}\chi_F(f)
	\ll
	q^{D\left(\ell \right)/2-d(D)}q^{{4}\epsilon d(f)+2\epsilon d(D)}.$$
	Now, trivially bounding the sum over $D$ gives
	$$\sum_{\substack{F\in \mathcal{H}_{q^{2}, D\left(\ell \right)/2}\\P|F \Rightarrow P \notin \mathbb{F}_q[t]}}\chi_F(f)
	\ll\sum_{m=0}^{D\left(\ell \right)/2}\sum_{\substack{D\in \mathbb{F}_q[t]\\d(D)=m}}q^{D\left(\ell \right)/2-m}q^{{4}\epsilon d(f)+2\epsilon m}
	\ll q^{D\left(\ell \right)/2}q^{{}\epsilon\left(d(f) + D\left(\ell \right)\right)  }.
	$$
	This concludes the proof of the above lemma.
\end{proof}

Using Lemma \ref{lemma 1 nK error term}, we bound the contribution from $\displaystyle E_\ell^{\mathrm{nK}}\left( \Phi, D(\ell)-2 \right)$.
\begin{lemma}
	\label{lemma error term nK}
	The contribution of non-$\ell^{th}$ power terms is bounded by
	\begin{equation*}
		\begin{split}
			E_\ell^{\mathrm{nK}}\left(\Phi, D(\ell)-2\right)
			\ll
			q^{N/2}q^{-D\left(\ell \right)/2}q^{\epsilon(N+g)}.
		\end{split}
	\end{equation*}
\end{lemma}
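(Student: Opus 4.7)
The plan is to insert the uniform bound from Lemma \ref{lemma 1 nK error term} into the defining expression \eqref{error term nK} and then estimate the remaining sums trivially. First I would use Lemma \ref{lemma 1 nK error term} to control the inner character sum, obtaining
$$\sum_{\substack{F\in \mathcal{H}_{q^{2}, D(\ell)/2}\\P\mid F \Rightarrow P \notin \mathbb{F}_q[t]}}\chi_F(f) \ll q^{D(\ell)/2}q^{\epsilon(d(f) + D(\ell))}$$
uniformly over non-$\ell^{th}$-power $f$, and I would note that the conjugate sum satisfies the same bound (either because $\overline{\chi_F} = \chi_F^{\ell-1}$ is again a primitive order $\ell$ character, or directly because Lemmas \ref{Lindelof upper bound} and \ref{Lindelof lower bound} apply equally to $\overline{\chi_F}$ in the proof of Lemma \ref{lemma 1 nK error term}).

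Next I would replace the remaining factors by crude bounds: $\Lambda(f)\le d(f)=n$, the values $\hat{\Phi}(\cdot)$ are bounded since $\Phi$ is a fixed trigonometric polynomial, $|\mathcal{M}_{q,n}|=q^n$, and from the preceding corollary $|\mathcal{C}_\ell^{\mathrm{nK}}(g)|\gg q^{D(\ell)}$, while $D(\ell)-2\asymp g$. Inserting these into \eqref{error term nK} gives
$$E_\ell^{\mathrm{nK}}(\Phi, D(\ell)-2)\ \ll\ \frac{1}{g\, q^{D(\ell)}}\sum_{1\le n\le N} n\cdot q^{-n/2}\cdot q^n\cdot q^{D(\ell)/2}\, q^{\epsilon(n+g)}\ =\ \frac{q^{-D(\ell)/2}}{g}\sum_{1\le n\le N} n\, q^{n/2}\, q^{\epsilon(n+g)}.$$

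Finally I would observe that the sum over $n$ is a geometric-type sum dominated by its last term at $n=N$, and the linear factor $n$ together with the $1/g$ can be absorbed into $q^{\epsilon(N+g)}$, yielding
$$E_\ell^{\mathrm{nK}}(\Phi, D(\ell)-2)\ \ll\ q^{N/2}q^{-D(\ell)/2}q^{\epsilon(N+g)},$$
which is the claimed bound. I do not expect a genuine obstacle here: the entire content has been offloaded into Lemma \ref{lemma 1 nK error term}, and the remaining argument only requires that the saving $q^{-n/2}$ coming from $|f|^{1/2}$ combine correctly with the trivial count $q^n$ of monic polynomials of degree $n$. The only subtlety worth double-checking is that the $\epsilon$ in Lemma \ref{lemma 1 nK error term} is applied to $d(f)+D(\ell)\le N+D(\ell)\ll N+g$, so the final error factor $q^{\epsilon(N+g)}$ does indeed cover both contributions.
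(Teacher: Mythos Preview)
Your proposal is correct and follows essentially the same route as the paper: insert the bound from Lemma~\ref{lemma 1 nK error term} (and its conjugate analogue) into \eqref{error term nK}, estimate $\sum_f \Lambda(f)/|f|^{1/2}$ trivially by $nq^{n/2}$, divide by $|\mathcal{C}_\ell^{\mathrm{nK}}(g)|\gg q^{D(\ell)}$, and absorb the polynomial factors into $q^{\epsilon(N+g)}$. The paper's proof is identical in structure and in the bounds used.
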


\begin{proof}
	Recall that 
	\begin{equation*}
		\begin{split}
			&E^{\mathrm{nK}}_{\ell}(\Phi,D\left(\ell\right)-2)\\
			&=
			\frac{1}{(D\left(\ell\right)-2)\left|\mathcal{C}^{\mathrm{nK}}_\ell\left(g\right)\right|}\sum_{1\le n\le N}\hat{\Phi}\left(\frac{n}{D\left(\ell\right)-2}\right)\sum_{\substack{f\in \mathcal{M}_{q, n}\\f \ \text{non-$\ell^{th}$ power}}}\frac{\Lambda(f)}{|f|^{1/2}}\sum_{\substack{F\in \mathcal{H}_{q^2, D\left(\ell \right)/2}\\P|F \Rightarrow P \notin \mathbb{F}_q[t]}}\left[\chi_F(f)+\overline{\chi_F(f)}\right],
		\end{split}
	\end{equation*}
	as in \eqref{error term nK}.\\
	Lemma \ref{lemma 1 nK error term} implies that,
	$$\sum_{\substack{F\in \mathcal{H}_{q^{2}, D\left(\ell \right)/2}\\P|F \Rightarrow P \notin \mathbb{F}_q[t]}}\overline{\chi_F(f)}
	\ll
	 q^{D\left(\ell \right)/2}q^{{}\epsilon\left(d(f) + D\left(\ell \right)\right)  }.$$
	 Thus 
	\begin{equation*}
		\begin{split}
			E_\ell^{\mathrm{nK}}\left(\Phi, D(\ell)-2\right)
			&\ll
			\frac{\ell-1}{g|\mathcal{C}_\ell^\mathrm{nK}(g)|}\sum_{1\le n\le N}\hat{\Phi}\left( \frac{(\ell-1)n}{2g} \right) \sum_{\substack{f\in \mathcal{M}_n\\f \ \text{non-$\ell^{th}$ powers}}}\frac{\Lambda(f)q^{D\left(\ell \right)/2}q^{{}\epsilon\left(n + D\left(\ell \right)\right)  }}{|f|^{1/2}}
			\\
			&\ll
			\frac{\ell-1}{g|\mathcal{C}_\ell^\mathrm{nK}(g)|}\sum_{1\le n\le N}\hat{\Phi}\left( \frac{(\ell-1)n}{2g} \right) \frac{nq^n}{q^{n/2}}
			\left(q^{D\left(\ell \right)/2}q^{{}\epsilon\left(n + D\left(\ell \right)\right)  }\right)\\
			&\ll
			q^{N/2}q^{-D\left(\ell \right)/2}q^{\epsilon(N+g)},
		\end{split}
	\end{equation*}
	which gives the result above.
\end{proof}
\subsection{The Non-Kummer Setting Results}
\begin{proof}[Proof of Theorem \ref{Theorem nK}]
	Recall some notations in Section \ref{notations}. Since $\displaystyle D(\ell)-2=\frac{2g}{\ell-1}$, the one-level density in the non-Kummer setting is 
	\begin{equation}\label{equation 1LD nK}
		\Sigma_\ell^\mathrm{nK}\left(\Phi, g\right)
		=
		\hat{\Phi}(0)-\mathcal{A}_{\ell}^{\mathrm{nK}}\left(\Phi, \frac{2g}{\ell-1}\right)-\frac{\ell-1}{g}\sum_{1\le n\le N}\hat{\Phi}\left(\frac{n}{D\left(\ell\right)-2}\right)q^{-n/2},
	\end{equation}
	where as in \eqref{equation A nKummer}
	\begin{equation*}
		\begin{split}
			&\mathcal{A}^{\mathrm{nK}}_{\ell}\left(\Phi, \frac{2g}{\ell-1}\right)\\
			&=
			\frac{1}{\left(D\left(\ell\right)-2\right)\left|\mathcal{C}^{\mathrm{nK}}_\ell\left(g\right)\right|}\sum_{1\le n\le N}\hat{\Phi}\left(\frac{n}{D\left(\ell\right)-2}\right)\sum_{f\in \mathcal{M}_{q,n}}\frac{\Lambda(f)}{|f|^{1/2}}\sum_{\substack{F\in \mathcal{H}_{q^2, D\left(\ell \right)/2}\\P|F \Rightarrow P \notin \mathbb{F}_q[t]}}\left[\chi_F(f)+\overline{\chi_F(f)}\right].
		\end{split}
	\end{equation*}
	From Lemma \ref{lemma main main term nK} and Lemma \ref{lemma error term nK}, we write Equation \eqref{equation 1LD nK} above as
	\begin{equation*}
		\begin{split}
			&\Sigma_\ell^\mathrm{nK}\left(\Phi, g\right)
			=
			\hat{\Phi}(0)-\frac{\ell-1}{g}\sum_{1\le n\le N}\hat{\Phi}\left(\frac{(\ell-1)n}{2g}\right)q^{-n/2}\\
			&-\frac{\ell-1}{g}
			\sum_{1\le n\le N/\ell}\hat{\Phi}\left(\frac{\ell(\ell-1)n}{2g}\right) \sum_{\substack{Q\in \mathcal{P}_{q, n/r}\\r \ge 1}}\frac{d(Q)}{|Q|^{\ell r/2}\left(1+|Q|^{-{2/m_Q}}\right)^{m_Q}}
			+O\left(
			q^{N/2}q^{-D\left(\ell \right)/2}q^{\epsilon(N+g)}\right),
		\end{split}
	\end{equation*}
	where $m_Q=\gcd(d(Q),2).$ This is the result in Theorem \ref{Theorem nK}.
\end{proof}

As a corollary, we confirm the symmetry type of the family.
\begin{proof} [Proof of corollary 2.4 in the non-Kummer setting]
	Let $\displaystyle N<\frac{2g}{\ell-1}$. Then 
	$$\lim_{g\rightarrow \infty}\Sigma_\ell^\mathrm{nK}\left(\Phi, g\right)
	=\hat{\Phi}(0),
	$$	
	since the double sums over $n$ and $Q$ above are $o(1)$ as $g\rightarrow \infty.$
	
	Furthermore, compute both integeral to confirm that 
	$$
	\hat{\Phi}(0)
	=
	\int_{-\infty}^{\infty}\hat{\Phi}(y)\hat{\mathcal{W}}_{U(D(\ell)-2)}(y)dy
	=
	\int_{-\infty}^{\infty}\hat{\Phi}(y)\delta_0(y)dy,$$
	where $\mathcal{W}_{U(D(\ell)-2)}(y)=\delta_0(y)$ denotes the one-level scaling density of the group of unitary matrices.
\end{proof}
This proves the symmetry types of the families are unitary and it supports the philosophy of Katz and Sarnak.

\bibliographystyle{amsalpha}
\bibliography{1LD_Ref.bib}

\end{document}